\theoremstyle{plain}
\newtheorem{theorem}{Theorem}
\newtheorem{cor}[theorem]{Corollary}
\newtheorem{prop}[theorem]{Proposition}
\newtheorem{lemma}[theorem]{Lemma}
\theoremstyle{remark}
\newtheorem{defn}{Definition}
\newtheorem*{remark}{Remark}
\newtheorem{ex}{Example}
\DeclareMathOperator{\N}{N}
\DeclareMathOperator{\Gammadist}{Gamma}
\DeclareMathOperator*{\argmin}{arg\,min}
\DeclareMathOperator{\Var}{Var}
\DeclareMathOperator{\mise}{MISE}
\DeclareMathOperator{\ise}{ISE}
\DeclareMathOperator{\imise}{(M)ISE}
\newcommand*{\addFileDependency}[1]{
  \typeout{(#1)}
  \@addtofilelist{#1}
  \IfFileExists{#1}{}{\typeout{No file #1.}}
}
\newcommand*{\myexternaldocument}[1]{%
    \externaldocument{#1}%
    \addFileDependency{#1.tex}%
    \addFileDependency{#1.aux}%
}
\newcommand{\blind}{0}
\newcommand{\showproofs}{1}
\newcommand{\elide}{\begin{center} \(\longleftarrow\) \textbf{Proof temporarily elided} \(\longrightarrow\) \end{center}}
\begin{document}

\def\spacingset#1{\renewcommand{\baselinestretch}%
{#1}\small\normalsize} \spacingset{1}


\if0\blind
{
  \title{\bf Smoothness-Penalized Deconvolution (SPeD) of a Density Estimate}
  \author{David Kent\footnote{Department of Statistics and Data Science, Cornell University} \footnote{This work was supported by the National Science Foundation under Grant AST-1814840. The opinions, findings, and conclusions, or recommendations expressed are those of the authors and do not necessarily reflect the views of the National Science Foundation.}\ \ and David Ruppert\footnotemark[1] \footnotemark[2] \footnote{School of Operations Research and Information Engineering, Cornell University}\hspace{.2cm}}
  \maketitle
} \fi

\if1\blind
{
    \title{\bf Smoothness-Penalized Deconvolution (SPeD) of a Density Estimate}
    \maketitle
} \fi

\bigskip
\begin{abstract}
This paper addresses the deconvolution problem of estimating a square-integrable probability density from observations contaminated with additive measurement errors having a known density. The estimator begins with a density estimate of the contaminated observations and minimizes a reconstruction error penalized by an integrated squared $m$-th derivative. Theory for deconvolution has mainly focused on kernel- or wavelet-based techniques, but other methods including spline-based techniques and this smoothness-penalized estimator have been found to outperform kernel methods in simulation studies. This paper fills in some of these gaps by establishing asymptotic guarantees for the smoothness-penalized approach. Consistency is established in mean integrated squared error, and rates of convergence are derived for Gaussian, Cauchy, and Laplace error densities, attaining some lower bounds already in the literature. The assumptions are weak for most results; the estimator can be used with a broader class of error densities than the deconvoluting kernel. Our application example estimates the density of the mean cytotoxicity of certain bacterial isolates under random sampling; this mean cytotoxicity can only be measured experimentally with additive error, leading to the deconvolution problem. We also describe a method for approximating the solution by a cubic spline, which reduces to a quadratic program.
\end{abstract}

\noindent%
{\it Keywords:}  ill-posed problem, measurement error, density estimation, regularization 
\vfill

\newpage
\spacingset{1.9} 

\section{Introduction} 

A smoothness-penalized density deconvolution estimator was introduced
in \cite{yang_density_2020} which is fast to compute, amenable to
shape constraints, and in simulation studies has substantially
improved finite-sample performance over the common deconvoluting
kernel density estimator of \cite{stefanski_deconvoluting_1990}. A
spline-based Bayesian approach for a related problem in
\cite{staudenmayer_density_2008} outperforms deconvoluting kernels in
simulation studies as well, and \cite{sarkar_bayesian_2014} does yet
better. In spite of these appealing properties, these estimators have
not yet received much attention. This is perhaps due to a lack of
theoretical guarantees; most asymptotic results for deconvolution
estimators focus on kernel- or wavelet-based
(e.g.\ \cite{pensky_adaptive_1999}) methods, while these other methods
have only been addressed in simulations.

In this paper, we address a continuous version of the
smoothness-penalized estimator in \cite{yang_density_2020} and provide
some theoretical guarantees. We prove the consistency of the density
estimates in \(L_2\) and derive upper bounds for the rate of
convergence, which are found to be optimal when compared to lower
bounds already in the literature. We also prove in
Theorem~\ref{theorem:smoothrates}, under stronger smoothness
conditions inspired by typical assumptions in the ill-posed problem
literature, fast rates of convergence which hold for any error
density, whether smooth or super-smooth. We are not aware of similar
results for kernel-based deconvolution estimators. Along the way, we
derive a representation of the estimator which is more convenient for
theoretical work than the variational formulation in
\cite{yang_density_2020} and investigate the finite-sample error for a
few settings.

Suppose a real-valued random variable of interest \(X\) has pdf \(f\),
and we wish to estimate \(f\). However, we instead observe independent
copies of \(Y = X+E\), a surrogate of \(X\) which has been
contaminated with an independent error \(E\). Suppose
further that \(E\) has known pdf \(g\). Under these
conditions, the pdf \(h\) of \(Y\) is given by the \emph{convolution}
of \(g\) and \(f\), i.e.
\begin{equation}
  h(y) = g*f(y) = \int f(t)g(y-t)\,dt.
\end{equation}
The task of estimating the density \(f\) from a sample \(Y_1, \dots,
Y_n\) of independent random variables with pdf \(h = g*f\) is
sometimes called a \emph{deconvolution} problem, since we can think of
the main goal as ``undoing'' the convolution with \(g\).

The method in \cite{yang_density_2020}, to be described shortly,
begins with a density estimate of \(h\), and proceeds to estimate
\(f\) through this density estimate of \(h\). To that end, we will
introduce one more abstraction: we will assume that we have access to
an \(L_2(\mathbb{R})\)-consistent estimator of \(h\), which we will
denote \(h_n\) (Estimators of functions will be indicated by a
subscript \(n\) rather than the customary ``hat'' to avoid clutter
when taking Fourier transforms, which will be denoted by an overset
twiddle). The mean integrated squared error (MISE) of \(h_n\) will be
denoted \(\delta_n^2 = \mathbb{E} \|h_n - h\|^2 = \mathbb{E}\int (h_n
- h)^2\). We will think of \(h_n\) as the ``data'' in this problem and
express the performance of our estimator in terms of
\(\delta_n^2\). Note that since the estimator is consistent, we have
\(\delta_n^2 \to 0\).

This setting occurs whenever a density estimate is required, but the
variable is measured with error; it is therefore a nearly ubiquitous
phenomenon, but typically ignored when the measurement error is
small. For a window into the meaning of ``small'' here, note that
ignoring measurement error \(E\) means to estimate \(g*f\) in place of
\(f\), incurring at a point \(x\) the error
  \(g*f(x) - f(x) = \mathbb{E}[f(x-E) - f(x)]\),
where \(E\) has pdf \(g\). Thus wherever \(f\) has large curvature on
the scale of \(E\), \(g*f\) and \(f\) will not be similar, and in such
cases measurement error should not be ignored. 

Because of the ubiquity of the setting, the application domains are
diverse. In this paper, we estimate a conditional
density that occurs when estimating the cytotoxicity of bacterial
isolates; in \cite{yang_bias-corrected_2021}, the authors use
deconvolution to estimate the density of a conditional expectation
occurring in nested Monte Carlo
simulations. \cite{staudenmayer_density_2008} apply deconvolution to
nutritional data from a clinical trial involving a dietary supplement,
and \cite{stefanski_deconvoluting_1990} treat data on saturated fat
intake.

The treatment of this problem makes great use of the Fourier
transform. Following conventions in \cite{folland_fourier_1992}, for
\(v \in L_1(\mathbb{R}) \cup L_2(\mathbb{R})\) we will write \(\tilde
v(\omega) = \lim_{r \to \infty} \int_{-r}^r e^{-i\omega x}v(x)\,dx =
\int e^{-i\omega x} v(x)\,dx\) for the Fourier transform of \(v\),
with the second equality holding as long as \(v \in L_1(\mathbb{R})\).
Let \(\tilde P_n(\omega) = \frac1n \sum_{j=1}^n e^{-iY_j\omega}\) be
the Fourier transform of the empirical distribution. If \(u(x) =
v*w(x)\), then \(\tilde u(\omega) = \tilde v(\omega) \tilde
w(\omega)\), so that the Fourier transform reduces convolution to
multiplication.

A well-known estimator of \(f\) in this setting is the deconvoluting
kernel (density) estimator (DKE), introduced in
\cite{stefanski_deconvoluting_1990}, which takes advantage of the
reduction of convolution to multiplication. First, we form a kernel
density estimate \(h_n^\lambda(x) = \frac{1}{n\lambda} \sum_{j=1}^n
K((x-Y_j)/\lambda)\) of \(h\), in which case \(\tilde
h_n^\lambda(\omega) = \tilde P_n(\omega) \tilde K(\lambda
\omega)\). Then we divide by \(\tilde g(\omega)\) and inverse
transform:
\begin{equation}
  \label{eqn:dkde}
  f^{\lambda}_n(x) = \frac{1}{2\pi} \int e^{i\omega x} \tilde P_n(\omega) \tilde K(\lambda \omega)/\tilde g(\omega)\,d\omega.
\end{equation}
In \cite{stefanski_deconvoluting_1990}, they find that if \(K_\lambda^*(x) = (2\pi)^{-1} \int e^{i\omega x} \tilde K(\omega)/\tilde g(\omega/\lambda)\,dt\), then \(f_n^\lambda\) has representation
\begin{equation}
  f_n^\lambda(x) = \frac{1}{n\lambda} \sum_{j=1}^n K_\lambda^*( (x-Y_j)/\lambda )
\end{equation} and are able to borrow from results on standard kernel
density estimators in their analysis. To ensure that the Fourier
inversion in Equation~\eqref{eqn:dkde} is well-defined,
\cite{stefanski_deconvoluting_1990} require \(K\) to be chosen to
satisfy \(\sup_\omega |\tilde K(\omega)/\tilde g(\omega/\lambda)| <
\infty\) and \(\int |\tilde K(\omega)/\tilde
g(\omega/\lambda)|\,d\omega < \infty\) for all \(\lambda > 0\),
suggesting band-limited kernels, including \(K(x) = \frac{1}{\pi}
(\sin(x)/x)^2\), which has Fourier transform \(\kappa(\omega) =
\mathds{1}_{[-2,2]}(1-|\omega|/2)\). Note that in particular, \(K(x) =
g(x)\) typically cannot satisfy these conditions. Additionally,
\cite{stefanski_deconvoluting_1990} restrict attention to \(g\) for
which \(|\tilde g(\omega)| > 0\), since the estimator involves
division by \(\tilde g(\omega)\). Under appropriate choice of
\(\lambda_n \to 0\), the estimator is consistent and attains optimal
rates in several settings. In \cite{fan_optimal_1991}, optimal rates
are addressed for \(f\) in a class of functions with \(m\)th
derivative H\"older-continuous. In \cite{zhang_fourier_1990}, optimal
rates are addressed over for \(f\) in a class of functions satisfying
\(\|\omega \tilde f(\omega)\|^2 < M < \infty\).

The density deconvolution technique introduced in
\cite{yang_density_2020} discretizes both the functions and the
convolution operator. The estimate \(h_n\) is approximated on a grid
by a vector \(\mathbf{h}_n\), and the convolution operator by a matrix
\(\mathbf{C}\), so that if \(\mathbf{v}\) is a discrete approximation
of a function \(v\), then \(\mathbf{C}\mathbf{v}\) is a discrete
approximation of \(g*v\). Then a discrete approximation
\(\mathbf{f}_n^\alpha\) of \(f\) is computed by solving the matrix
problem
\begin{equation}
  \label{eqn:yangqp}
  \mathbf{f}^\alpha_n = \argmin_{\mathbf{x}}\;\|\mathbf{C}\mathbf{x} - \mathbf{h}_n\|^2 + \alpha Q(\mathbf{x}),
\end{equation}
where \(Q(\,\cdot\,)\) is a quadratic penalty. (Vectors and matrices
will always be typeset in boldface, and we overload \(\|\cdot\|\) to
denote the vector 2-norm when the argument is a vector.)  For
\(Q(\,\cdot\,)\) the authors suggest, among other choices, the squared
norm of a second-differencing operator applied to \(\mathbf{x}\):
\(Q(\mathbf{x}) = \|\mathbf{D}_2\mathbf{x}\|_2^2\). Heuristically,
this approach yields an estimate \(\mathbf{f}_n^\alpha\) whose
convolution \(\mathbf{C}\mathbf{f}_n^\alpha\) is close to the density
estimate \(\mathbf{h}_n\) (due to the first term), but which is not
too wiggly (due to the second term). They observe that
Equation~\eqref{eqn:yangqp} can be formulated as a quadratic program
and solved efficiently using standard methods, and that linear
constraints can be introduced as well.

In this paper, we analyze the exact, continuous version of the
estimator introduced in \cite{yang_density_2020}, with penalty \(Q(v)
= \|v^{(m)}\|^2\), where \(v^{(m)}\) denotes the \(m\)th derivative of
\(v \in L_2(\mathbb{R})\), i.e.
\begin{equation}
  \label{eqn:firstestimator}
  f_n^\alpha = \argmin_{v}\;\|g*v - h_n\|^2 + \alpha \|v^{(m)}\|^2.
\end{equation}
The argument \(v\) is taken to range over the subset of
\(L_2(\mathbb{R})\) for which the objective function is well-defined,
which we will make specific in Section~\ref{sec:estimator}.  We will
occasionally find it useful to use operator notation, with \(T:
L_2(\mathbb{R}) \to L_2(\mathbb{R})\), \(T: v \mapsto g*v\) and \(L:
\mathcal{D}(L) \to L_2(\mathbb{R})\), \(L: v \mapsto v^{(m)}\), so
that we can alternatively write
\begin{equation}
  \label{eqn:firstestimator_operator}
  f_n^\alpha = \argmin_{v}\;\|Tv - h_n\|^2 + \alpha \|Lv\|^2.
\end{equation} In Section~\ref{sec:practice}, we suggest an
alternative to the discretization approach in
\cite{yang_density_2020}. We instead solve
Equation~\eqref{eqn:firstestimator} out of an approximation space of
piecewise polynomial spline functions. Calling this approximation
\(s_n^\alpha\), we prove in Theorem~\ref{theorem:splineasgood} that
the resulting approximation error \(\mathbb{E}\|s_n^\alpha -
f_n^\alpha\|^2\) can be made to decrease faster than the order of
convergence of \(\mathbb{E}\|f_n^\alpha - f\|^2\) by choosing a
suitably rich approximation space. It follows that
\(\mathbb{E}\|s_n^\alpha - f\|^2\) has the same order of convergence.

One appealing property of this estimator is that the computational
techniques proposed here and in \cite{yang_density_2020} can be quite
fast, with the computational complexity determined primarily by the
dimension of the discretization grid or spline basis. We will see that
computing the spline approximation can be formulated as a quadratic
program, and that many useful linear constraints can be imposed. Among
these, positivity and integrate-to-one constraints are easily imposed,
as are support constraints and some shape
constraints. \cite{yang_density_2020} even suggest a method for
imposing a unimodal constraint by a family of ``unimodal at a point
\(x_0\)'' constraints, each of which can be imposed as a linear
constraint.

Finally, the DKE approach requires \(g\) to have non-vanishing Fourier
transform and therefore cannot be applied to, for example, uniformly
distributed errors. The estimator addressed here has no such
requirement; instead, the Fourier transform of \(g\) must only be
non-vanishing almost-everywhere, which is also a necessary condition
for identifiability in this model.

After an overview of the inherent difficulties of deconvolution in
Section~\ref{sec:illposed} and introducing the estimator in detail in
Section~\ref{sec:estimator}, we prove global
\(L_2(\mathbb{R})\)-consistency, as well as rates of convergence in
Section~\ref{sec:asymptotics}. In Section~\ref{sec:practice}, we
address the practical issue of computing the estimate, investigate its
performance in finite samples, and apply it to a problem on bacterial
cytotoxicity.

\section{Ill-Posedness of the Problem}
\label{sec:illposed}

Deconvolving a density estimate is a typical ``ill-posed'' problem.
We will see that ill-posedness means a naive solution to the
deconvolution problem must fail to be consistent, and any consistent
deconvolution estimator must reflect some aspect of regularization.  A
problem is said to be well-posed if \cite[Chapter
2]{engl_regularization_1996} the following conditions are met:
``\begin{enumerate*}
 \item \label{cond:exists} For all admissible data, a solution exists,
 \item \label{cond:unique} For all admissible data, the solution is unique, and
 \item \label{cond:continuous} The solution depends continuously on the data,
\end{enumerate*}''
and ill-posed otherwise.

For the deconvolution problem, consider the operator
\(T:L_2(\mathbb{R}) \to L_2(\mathbb{R})\) which convolves a function
with \(g\), i.e.\ \(v \mapsto g*v\). Since \(h = g*f\), plugging in \(v
= f\) clearly solves the following operator equation:
\begin{equation}\label{eqn:exactopeq}
Tv = h.
\end{equation}
However, we do not know \(h\). We have an estimate \(h_n\) of \(h\),
and we would like to solve the analogous problem with our estimate
\(h_n\) on the right-hand side, i.e.
\begin{equation}\label{eqn:approxopeq}
  Tv = h_n.
\end{equation}
There is an immediate issue with this approach: there is no \(v\)
solving this equation unless \(h_n \in \mathcal{R}(T)\), i.e.\ \(h_n =
T\psi\) for some \(\psi \in L_2(\mathbb{R})\). If \(h_n\) is
unrestricted, the problem of solving Equation~\eqref{eqn:approxopeq}
violates Condition \ref{cond:exists} of well-posedness. However, we
can overcome this problem by using a generalized inverse of \(T\), so
we will set it aside for the moment.

Instead, we will focus on a more critical deficiency: the solution
operator for Equation~\eqref{eqn:approxopeq} is not continuous in
\(h_n\). This means that a small perturbation of the right-hand side
can lead to arbitrarily large fluctuations in the solution, so that
problem of solving \(Tv = h_n\) is not a good approximation of solving
\(Tv = h\) no matter how well \(h_n\) approximates \(h\). If we
require \(h_n \in \mathcal{R}(T)\) so that the solution operator is
simply \(T^{-1}\), then this discontinuity would entail that for any
\(\varepsilon > 0\) and \(C > 0\), we can have \(\|h_n - h\| <
\varepsilon\), but \(\|T^{-1}h - T^{-1}h_n\| = \|f - T^{-1}h_n\| >
C\). No matter how good we require the estimate \(h_n\) of \(h\) to
be, its exact deconvolution may yet be an arbitrarily bad estimate of
\(f\). Let's prove it formally: the following proposition guarantees
the existence of a function \(u\) so that taking \(h_n = h + u\)
creates the unhappy situation just described.

\begin{prop}\label{prop:discontinuous}
  Assume that the Fourier transform \(\tilde g\) of \(g\) is a.e. non-vanishing, so that \(T: L_2(\mathbb{R}) \to L_2(\mathbb{R})\) is injective (Fact~\ref{fact:injective}). Let \(T^{-1}:\mathcal{R}(T) \to L_2(\mathbb{R})\) be the inverse of \(T\) from its range. Then, for any \(M > 0\), there is some \(u \in \mathcal{R}(T)\) for which \(\label{eqn:discontinuous} \|T^{-1}u\| > M\|u\|.\)
\end{prop}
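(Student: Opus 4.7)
The plan is to exploit the fact that $T$ acts by multiplication by $\tilde g$ on the Fourier side, combined with the fact that $\tilde g$ must decay at infinity. Since $g$ is a probability density, $g \in L_1(\mathbb{R})$, so by the Riemann--Lebesgue lemma $\tilde g$ is continuous and $\tilde g(\omega) \to 0$ as $|\omega| \to \infty$. This means $\tilde g$ is arbitrarily small on sets of positive measure lying far from the origin, and functions whose Fourier transforms are concentrated on such sets will be strongly attenuated by $T$, demonstrating the unboundedness of $T^{-1}$.

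Concretely, given $M > 0$, I would first pick $\varepsilon > 0$ with $\varepsilon < 1/M$, then use continuity and decay of $\tilde g$ to produce a Borel set $A \subset \mathbb{R}$ of positive, finite Lebesgue measure on which $|\tilde g(\omega)| < \varepsilon$ (for instance, take $\omega_0$ large enough that $|\tilde g(\omega_0)| < \varepsilon/2$ and let $A$ be a small neighborhood of $\omega_0$ on which $|\tilde g| < \varepsilon$ by continuity). Next, define $v \in L_2(\mathbb{R})$ as the inverse Fourier transform of $\mathds{1}_A$, so that $\tilde v = \mathds{1}_A \in L_2(\mathbb{R})$ and $v$ is nonzero. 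Set $u = Tv \in \mathcal{R}(T)$; since $T$ is injective by Fact~\ref{fact:injective}, $T^{-1}u = v$.

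By the Fourier representation of convolution, $\widetilde{Tv}(\omega) = \tilde g(\omega)\tilde v(\omega)$, and Plancherel's theorem gives
\begin{equation}
  \|Tv\|^2 = \frac{1}{2\pi}\int |\tilde g(\omega)|^2 \mathds{1}_A(\omega)\,d\omega \le \frac{\varepsilon^2}{2\pi}|A| = \varepsilon^2 \|v\|^2.
\end{equation}
Therefore $\|T^{-1}u\|/\|u\| = \|v\|/\|Tv\| \ge 1/\varepsilon > M$, which is exactly the claim.

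The main (minor) obstacle is just justifying the existence of the set $A$: one needs $\tilde g$ to be not merely a.e.\ non-vanishing but also small on some set of positive measure. This is where the hypothesis $g \in L_1(\mathbb{R})$ (so Riemann--Lebesgue applies) does the real work; the a.e.\ non-vanishing hypothesis is only used through Fact~\ref{fact:injective} to identify $T^{-1}u$ with $v$ unambiguously. Everything else is a short Plancherel computation.
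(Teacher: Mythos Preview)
Your proof is correct and takes a genuinely different route from the paper's. The paper works entirely in the spatial domain: it sets \(\psi_n = n\mathds{1}_{[0,1/n]}\), notes that \(\|\psi_n-\psi_{n+1}\|=1\), and uses the approximate-identity property \(g*\psi_n \to g\) in \(L_2\) to show that \(\phi_n = T\psi_n\) is Cauchy; then \(u=\phi_n-\phi_{n+1}\) has \(\|T^{-1}u\|=1\) but \(\|u\|\) arbitrarily small. You instead work on the Fourier side, using Riemann--Lebesgue to locate a set where \(|\tilde g|\) is small and Plancherel to convert this into a norm estimate. Your argument is shorter and slightly more general: the paper's convergence \(g*\psi_n\to g\) in \(L_2\) tacitly uses \(g\in L_2\) (Assumption~\ref{assumption:gisl2}), whereas your argument needs only \(g\in L_1\), which is already guaranteed by \(g\) being a probability density. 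On the other hand, the paper's construction makes the discontinuity vivid by exhibiting an explicit Cauchy sequence in \(\mathcal{R}(T)\) whose preimages do not get close, which connects more directly to the discussion preceding the proposition about perturbing \(h\) by a small \(u\).
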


\if1\showproofs
{
  \begin{proof}[Proof of Proposition~\ref{prop:discontinuous}]
  We will construct a sequence \(\phi_n \in \mathcal{R}(T)\) which is
a Cauchy sequence in \(L_2(\mathbb{R})\), but with the property that
for \(n \in \mathbb{N}\), we have \(\|T^{-1}(\phi_n - \phi_{n+1})\| =
1\). Once we have this sequence, we can finish the proof in the
following way. Fix \(M > 0\). Since \(\phi_n\) is Cauchy, we can
choose \(n \in \mathbb{N}\) large enough that \(\|\phi_n -
\phi_{n+1}\| < \frac{1}{M}\). Then \(u = \phi_n - \phi_{n+1}\)
satisfies
\begin{equation}
  \|T^{-1}(\phi_n - \phi_{n+1})\| > M \|\phi_n - \phi_{n+1}\|,
\end{equation}
as needed.

Now, if we can find such a sequence \(\phi_n\), we are finished. To that end, let \(\psi_n = n\mathds{1}_{[0,1/n]}\). It can be checked that \(\|\psi_n - \psi_{n+1}\| = 1\). Furthermore, the \(\psi_n\) constitute an ``approximate identity,'' so that by \citet[Theorem 8.14a]{folland_real_1999}, we have \(\|g*\psi_n - g\| \to 0\) as \(n \to \infty\). Now, let \(\phi_n = g*\psi_n = T\psi_n\). To see that \(\phi_n\) is Cauchy, apply the triangle inequality:
\begin{equation}
  \|\phi_n - \phi_m\| \leq \|\phi_n - g\| + \|\phi_m - g\| = \|g*\psi_n - g\| + \|g*\psi_m - g\|.
\end{equation}
For the other property, note that
\begin{equation}
  \|T^{-1}(\phi_n - \phi_{n+1})\| = \|T^{-1}(T\psi_n - T\psi_{n+1})\| = \|\psi_n - \psi_{n+1}\| = 1,
\end{equation}
finishing the proof.
\end{proof}


} \fi

\if0\showproofs
{
  \elide
} \fi

Now, even if \(h_n \not \in \mathcal{R}(T)\), a generalized inverse
like the Moore-Penrose inverse \(T^\dag\) may be used in place of
\(T^{-1}\), ensuring that Conditions \ref{cond:exists} and
\ref{cond:unique} are met. But these generalized inverses extend
\(T^{-1}\) from \(\mathcal{R}(T)\), so they too fail to be continuous
by Proposition~\ref{prop:discontinuous}.

We turn to a method of regularization solution. A regularized solution
of Equation~\eqref{eqn:exactopeq} is a family of operators
\(\{R_\alpha\}_{\alpha>0}\) which approximate \(T^{-1}\) or an
extension thereof, and which has the property that for each
\(\alpha\), \(R_\alpha\) is a continuous operator. For our choice of
regularization by smoothness penalty, we will see in
Theorem~\ref{theorem:representing} that each \(R_\alpha\) is a bounded
operator. In Theorem~\ref{theorem:consistency}, we will see that the
regularization does approximate the exact solution to
Equation~\eqref{eqn:exactopeq}, and in
Theorems~\ref{theorem:smoothrates}~\&~\ref{theorem:rates}, we will see
the rates of convergence under a few different conditions.

\section{Assumptions}
We assume throughout that \(f\) and \(g\) are probability densities,
and that \(h_n \in L_2(\mathbb{R})\).  The following is a list of all
further assumptions that recur in the theoretical results; in each
statement we will name the assumptions required from this
list. Assumptions that are used only for a single result are stated in
that result. First, assumptions that will be made on the target
density \(f\):
\begin{enumerate*}[label=(F\arabic*)]
\item\label{assumption:fisl2} \(f \in L_2(\mathbb{R})\);
\item\label{assumption:fkfour} \(\int |\omega^k \tilde f(\omega)|^2\,d\omega < \infty\) for some \(1 \leq k \leq 2m\).
\end{enumerate*}
Now, assumptions that will be made on the error density \(g\):
\begin{enumerate*}[label=(G\arabic*)]
\item\label{assumption:gzeros} \(\tilde g\) vanishes only on a set of Lebesgue measure zero;
\item\label{assumption:gisl2} \(g \in L_2(\mathbb{R})\). 
\item\label{assumption:gtwidintgbl} \(\int |\tilde g(\omega)|\,d\omega < \infty\);
\end{enumerate*}
Note that Assumptions~\ref{assumption:gzeros}-~\ref{assumption:gtwidintgbl}
all hold for Normal, Cauchy, and Laplace errors. Note also that if \(f
\in L_2(\mathbb{R})\), then by Young's convolution inequality, \(h \in
L_2(\mathbb{R})\) as well.

\section{The Estimator}
\label{sec:estimator}

One family of solution operators for Equation~\eqref{eqn:approxopeq}
which extend \(T^{-1}\) are those which map \(h_n\) to a
\emph{least-squares solution}, i.e.\ to a \(v\) minimizing \(\|Tv -
h_n\|\). The familiar Moore-Penrose generalized inverse \(T^\dag\) is
a least-squares extension---it is the operator which maps \(h_n\) to
the least-squares solution \(v\) for which \(v\) has minimal norm
\(\|v\|\). Classical Tikhonov regularization approximates \(T^\dag\)
by the family of operators \(\{S_\alpha\}_{\alpha>0}\) mapping
\begin{equation}
  S_\alpha: h_n \mapsto \argmin_v\; \|Tv - h_n\|^2 + \alpha \|v\|^2.
\end{equation}
Intuitively, the solution \(S_\alpha h_n\) is a function which is
reasonably small in \(L_2(\mathbb{R})\) due to the second term, and
for which \(\|TS_\alpha h_n - h_n\|\) is reasonably small.

Here we address a similar approach, but rather than preferring a
function which is small in \(L_2(\mathbb{R})\), we prefer one which is
smooth, in the sense that its \(m\)th derivative, \(m \geq 1\), has
small norm. Thus we have a family \(\{R_\alpha\}_{\alpha > 0}\)
mapping
\begin{equation}
  R_\alpha: h_n \mapsto \argmin_v\; \|Tv - h_n\|^2 + \alpha \|v^{(m)}\|^2.
\end{equation}
This is a particular case of Tikhonov regularization with differential
operators, which has been treated in an abstract, non-statistical
framework in \cite{locker_regularization_1980}, \citet[Chapter
8]{engl_regularization_1996}, and \cite{nair_trade-off_1997}.

Since our estimator will measure the smoothness of a possible estimate
by the magnitude of its square-integrated \(m\)th derivative, the
estimate must be chosen from among those functions for which this
quantity is finite. To that end, let \(H^m(\mathbb{R}) = \{v \in
L_2(\mathbb{R}) : v^{(k)} \in L_2(\mathbb{R})\text{ for }0 \leq k \leq
m\}\) denote the Sobolev space of square-integrable functions with
square-integrable weak derivatives up to order \(m\). Assume
throughout that \(A = \{\omega | \tilde g(\omega) = 0\}\) has Lebesgue
measure zero.

\begin{defn}
  The \textbf{Tikhonov functional} with data \(u\in L_2(\mathbb{R})\) and penalty parameter \(\alpha>0\) is a function defined by
  \begin{equation}\begin{aligned}
      G(\,\cdot\,;u,\alpha): H^m(\mathbb{R}) &\to \mathbb{R}\\
      v &\mapsto \|g*v - u\|^2 + \alpha \|v^{(m)}\|^2.
  \end{aligned}\end{equation}
\end{defn}

\
\begin{defn}\label{defn:estimator}
  Let \(h_n\) be a density estimate of \(h\) from the sample \(Y_1,
\dots, Y_n\), and let \(\alpha > 0\). The \textbf{Smoothness-penalized
deconvolution of \(h_n\)} or \textbf{Smoothness-penalized
deconvolution estimate (SPeD) of \(f\)} is defined variationally by
  \begin{equation}\begin{aligned}
      \label{eqn:estimator}
      f_n^\alpha &= \argmin_{v \in H^m(\mathbb{R})}\; G(v;h_n,\alpha) \\
      &= \argmin_{v \in H^m(\mathbb{R})}\;\|g*v - h_n\|^2 + \alpha \|v^{(m)}\|^2.
  \end{aligned}\end{equation}
\end{defn}

\begin{remark}
  For a given \(h_n \in L_2(\mathbb{R})\) and \(\alpha > 0\), the
estimator \(f_n^\alpha\) in Definition~\ref{defn:estimator} is
uniquely defined\cite[Theorem
3.5]{locker_regularization_1980}. Moreover, \(f_n^\alpha \in
H^{2m}(\mathbb{R})\).
\end{remark}

\subsection{Representations of the estimator}
Since the variational characterization of \(f_n^\alpha\) does not lend
itself to easy analysis, in Theorem \ref{theorem:representing} we
present an explicit representation for \(f^\alpha_n\), both in terms
of \(h_n\) and the Fourier transform of \(h_n\); if a kernel density
estimator is used for \(h_n\), we will see that \(f^\alpha_n\) can be
computed as a kernel estimate as well, though this is not the approach
we take in the sequel. The Fourier representation will make clear the
manner in which the Tikhonov regularization approximates the ill-posed
exact deconvolution problem.

\begin{figure}[h]
\center\begin{tikzpicture}[yscale=0.3,xscale=1.0]
\pgfmathdeclarefunction{sinc}{1}{%
    \pgfmathparse{abs(#1)<0.01 ? int(1) : int(0)}%
    \ifnum\pgfmathresult>0 \pgfmathparse{1}\else\pgfmathparse{sin(#1 r)/#1}\fi%
    }
\draw[thick, lightgray, ->] (-3,0) -- (3.25, 0)
node[anchor=south west] {$\omega$};

\foreach \x in {-1, 1}
\draw[thick] (\x, 0.067) -- (\x, -0.067)
  node[anchor=north] {\(\x\)};

\draw[thick, lightgray, ->] (0, 0) -- (0, 10.5)
node[anchor=south west] {$y$};

\draw[thick] (-0.1, 5) -- (0.1, 5)
  node[anchor=west] {\(5\)};
\draw[thick] (-0.1, 10) -- (0.1, 10)
  node[anchor=west] {\(10\)};

\draw [black, thick, smooth, samples=100, domain=-2.2:2.2] plot(\x,{exp(\x*\x/2)});
\draw [black, dashed, smooth, samples=100, domain=-3:3] plot(\x,{exp(-\x*\x/2)/(exp(-\x*\x/2)*exp(-\x*\x/2) + 1*\x*\x*\x*\x});
\draw [black, dashed, smooth, samples=100, domain=-3:3] plot(\x,{exp(-\x*\x/2)/(exp(-\x*\x/2)*exp(-\x*\x/2) + 0.01*\x*\x*\x*\x});
\draw [black, dashed, smooth, samples=100, domain=-3:3] plot(\x,{exp(-\x*\x/2)/(exp(-\x*\x/2)*exp(-\x*\x/2) + 0.0001*\x*\x*\x*\x});
\end{tikzpicture}\caption{For \(\tilde g\) corresponding to \(\N(0,1)\). Thick line is \(1/\tilde g\), while dashed lines are, from lower to upper, the multiplier \(\tilde \varphi_\alpha\) in Theorem \ref{theorem:representing}\ref{theorem:representing:mult} for \(\alpha = 1, 10^{-2}, 10^{-4}\).}
\label{fig:approxinv}
\end{figure}

\begin{theorem}{(Representing the solution)}
\label{theorem:representing}
Let
\begin{equation}\label{eqn:varphi}
  \tilde \varphi_\alpha(\omega) = \frac{\overline{\tilde g(\omega)}}{|\tilde g(\omega)|^2 + \alpha \omega^{2m}}\qquad\text{ and }\qquad\varphi_\alpha(x) = \lim_{r\to\infty}\frac{1}{2\pi} \int_{-r}^r e^{i\omega x} \tilde \varphi_\alpha(\omega)\,d\omega.
\end{equation}

Then 
\begin{enumerate}[label=(\roman*)]
\item\label{theorem:representing:mult} \(\tilde f^\alpha_n(\omega) = \tilde \varphi_\alpha(\omega) \tilde h_n(\omega)\),
\item\label{theorem:representing:conv} \(f^\alpha_n(x) = \varphi_\alpha * h_n(x)\), and
\item\label{theorem:representing:kernel} if \(h_n\) is a kernel density estimate with bandwidth \(\nu\), then there is another kernel \(K_{\alpha,\nu}\) for which
\[f^\alpha_n(x) = \frac{1}{n} \sum_{i=1}^n K_{\alpha,\nu}(x - Y_i).\]
\end{enumerate}
Furthermore,
\begin{enumerate}[label=(\roman*)]\setcounter{enumi}{3}
\item\label{theorem:representing:bound} \(\sup_\omega |\tilde \varphi_\alpha(\omega)| \leq C\alpha^{-\frac12}\) for all \(\alpha < M\), 
\item\label{theorem:representing:unitint} If \(\varphi_\alpha \in L_1(\mathbb{R})\), then \(\int \varphi_\alpha(x)\,dx = 1\), so that \(\int f_n^\alpha(x)\,dx = 1\).
\item\label{theorem:representing:apriori} \(\|D^k f_n^\alpha\| \leq C\alpha^{-1}\) for all \(\alpha < M\) and \(0 \leq k < 2m\) with \(C\) depending only on \(g\), \(m\), and \(M\). Under Assumption~\ref{assumption:gisl2}, it holds for \(k = 2m\) as well.
\item\label{theorem:representing:apriorisup} \(\sup_x |D^k f_n^\alpha(x)| \leq C\alpha^{-1}\) for all \(\alpha < M\) and \(0 \leq k < 2m - 1\) with \(C\) depending only on \(g\), \(m\), and \(M\). Under Assumption~\ref{assumption:gtwidintgbl}, it holds for all \(0 \leq k \leq 2m\).
\end{enumerate}
\end{theorem}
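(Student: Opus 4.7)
The plan is to transfer the entire problem to the Fourier domain, where $T$ acts by multiplication by $\tilde g$ and $D^m$ by $(i\omega)^m$, so that the functional $G(v;h_n,\alpha)$ becomes a separable pointwise quadratic and every part of the theorem follows from explicit bounds on the multiplier $\tilde\varphi_\alpha$. For part (i), apply Plancherel to both terms of $G$; using $\widetilde{v^{(m)}}(\omega)=(i\omega)^m\tilde v(\omega)$ gives $G(v;h_n,\alpha) = (2\pi)^{-1}\int|\tilde g\tilde v-\tilde h_n|^2 + \alpha\omega^{2m}|\tilde v|^2\,d\omega$. Viewing the integrand as a strictly convex quadratic in $\tilde v(\omega)$ at each frequency and minimizing (complete the square, or set the Wirtinger derivative to zero) yields $\tilde v(\omega)=\tilde\varphi_\alpha(\omega)\tilde h_n(\omega)$; combined with the existence and uniqueness of the $H^m$ minimizer stated in the remark, and with the verification that $\tilde\varphi_\alpha\tilde h_n$ corresponds to an $H^m$ function (using (iv)), this proves (i). Part (ii) is then the convolution theorem, and for (iii) I write $h_n=K_\nu*\hat P_n$ with $K_\nu=\nu^{-1}K(\cdot/\nu)$ and $\hat P_n=n^{-1}\sum\delta_{Y_j}$; associativity gives $f_n^\alpha=(\varphi_\alpha*K_\nu)*\hat P_n$, so I set $K_{\alpha,\nu}=\varphi_\alpha*K_\nu$.

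For (iv) I split $\mathbb{R}$ into $|\omega|\leq r_0$ and $|\omega|>r_0$, where $r_0>0$ is chosen small enough that $|\tilde g|\geq 1/2$ on the inner region (possible because $\tilde g$ is continuous with $\tilde g(0)=1$); inside, $|\tilde\varphi_\alpha|\leq 1/|\tilde g|\leq 2$, while outside, AM-GM on the denominator $|\tilde g|^2+\alpha\omega^{2m}\geq 2\sqrt{\alpha}|\omega|^m|\tilde g|$ gives $|\tilde\varphi_\alpha|\leq(2\sqrt{\alpha}|\omega|^m)^{-1}\leq(2\sqrt{\alpha}r_0^m)^{-1}$. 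Taking the maximum and using $\alpha<M$ yields the $C\alpha^{-1/2}$ bound. For (v), evaluate $\tilde\varphi_\alpha$ at zero: $\tilde\varphi_\alpha(0)=\overline{\tilde g(0)}/|\tilde g(0)|^2=1$ because $\tilde g(0)=\int g=1$; if $\varphi_\alpha\in L_1$ then $\int\varphi_\alpha=\tilde\varphi_\alpha(0)=1$, and a direct Fubini argument (using $\int h_n=1$) gives $\int f_n^\alpha=(\int\varphi_\alpha)(\int h_n)=1$.

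For (vi) and (vii), $\widetilde{D^kf_n^\alpha}=(i\omega)^k\tilde\varphi_\alpha\tilde h_n$, so both bounds reduce to norm estimates on $\omega^k\tilde\varphi_\alpha$. The hard part is that no single simple bound matches the stated order $\alpha^{-1}$ for all $k$; I will combine two bounds based on which of $|\tilde g|^2$ and $\alpha\omega^{2m}$ dominates the denominator. On the inner region $|\omega|\leq r_0$, $|\omega^k\tilde\varphi_\alpha|\leq 2r_0^k$ is independent of $\alpha$; on the outer region, the identity
\begin{equation}
\omega^{2m}\tilde\varphi_\alpha(\omega) = \frac{\overline{\tilde g(\omega)}}{\alpha}\cdot\frac{\alpha\omega^{2m}}{|\tilde g(\omega)|^2+\alpha\omega^{2m}}
\end{equation}
(whose second factor lies in $[0,1]$) gives $|\omega^k\tilde\varphi_\alpha|\leq|\tilde g|/(\alpha|\omega|^{2m-k})\leq(\alpha r_0^{2m-k})^{-1}$ for $m\leq k<2m$, while for $0\leq k<m$ the AM-GM bound of (iv) multiplied by $|\omega|^k\leq |\omega|^m/(|\omega|^{m-k})$ suffices. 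Combining yields $\sup_\omega|\omega^k\tilde\varphi_\alpha|\leq C\alpha^{-1}$ with $C$ depending only on $g,m,M$. Part (vi) then follows from Plancherel: $\|D^kf_n^\alpha\|\leq(2\pi)^{-1/2}\|\omega^k\tilde\varphi_\alpha\|_\infty\|\tilde h_n\|_2\leq C\alpha^{-1}\|h_n\|$; the $k=2m$ case is where the argument is tightest, and Assumption (G2) enters naturally via the alternative bound $\|\omega^{2m}\tilde\varphi_\alpha\|_2\leq\alpha^{-1}\|\tilde g\|_2$ combined with $\|\tilde h_n\|_\infty\leq\|h_n\|_1=1$. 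For (vii), Fourier inversion gives $\sup_x|D^kf_n^\alpha(x)|\leq(2\pi)^{-1}\|\omega^k\tilde\varphi_\alpha\tilde h_n\|_1$; Cauchy-Schwarz splits this as $\|\omega^k\tilde\varphi_\alpha\|_2\|\tilde h_n\|_2$, and the outer-tail integral of $\omega^{2k}|\tilde\varphi_\alpha|^2$ behaves like $\int_{|\omega|>r_0}\omega^{2k-4m}d\omega/\alpha^2$, whose convergence at infinity explains the restriction $k<2m-1$ without further hypotheses; the $k=2m$ extension under (G3) uses instead $\|\omega^{2m}\tilde\varphi_\alpha\|_1\leq\alpha^{-1}\|\tilde g\|_1$ paired with $\|\tilde h_n\|_\infty\leq 1$.
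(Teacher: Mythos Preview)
Your argument is correct and, for the parts the paper proves in the main text, largely parallel to it. The chief difference is in part~(i): the paper does not minimize the Fourier-side integrand pointwise. Instead it invokes an abstract result (Theorem~3.1 of \cite{locker_regularization_1980}) that characterizes the minimizer of $G(\cdot;h_n,\alpha)$ as the solution of the Euler--Lagrange operator equation $(T^*T+\alpha L^*L)f_n^\alpha = T^*h_n$, identifies $T^*$ as cross-correlation with $g$ and $L^*L$ as $(-1)^m D^{2m}$, and only then passes to the Fourier side to solve $(|\tilde g|^2+\alpha\omega^{2m})\tilde f_n^\alpha=\overline{\tilde g}\,\tilde h_n$. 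Your route---Plancherel on $G$, then pointwise completion of the square, then checking that $\tilde\varphi_\alpha\tilde h_n$ lies in $H^m$ so it must coincide with the unique minimizer---is more self-contained and avoids the external reference; the paper's route buys a cleaner link to the general Tikhonov framework and sidesteps the measurability/$H^m$-membership verification you have to do by hand. For~(iv) your split at $r_0$ and the AM--GM bound are exactly the paper's two ingredients, and~(ii) is handled the same way (the paper is slightly more explicit in exhibiting a square-integrable majorant for $\tilde\varphi_\alpha$).

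One point to watch in~(vi): your primary bound $\|D^k f_n^\alpha\|\le C\alpha^{-1}\|h_n\|$ carries a factor of $\|h_n\|_2$, but the theorem asserts $C$ depends only on $g,m,M$. The repair is the one you already use for $k=2m$: swap the H\"older split to $\|\omega^k\tilde\varphi_\alpha\|_2\,\|\tilde h_n\|_\infty$ for \emph{every} $k$, using $\|\tilde h_n\|_\infty\le\|h_n\|_1=1$ (since $h_n$ is a density estimate). Your own tail estimate shows $\|\omega^k\tilde\varphi_\alpha\|_2\le C\alpha^{-1}$ for $k<2m$ with no assumption on $g$, and for $k=2m$ under~(G2), which is precisely the pattern of the theorem. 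Parts~(iii), (v)--(vii) are deferred to the supplement in the paper, so a line-by-line comparison is not possible, but your outlines are sound modulo this H\"older swap.
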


\begin{remark}
  Before moving on to the proof, it is worth making a few observations.
  \begin{itemize}
  \item Except where mentioned in \ref{theorem:representing:apriori} and \ref{theorem:representing:apriorisup}, Theorem~\ref{theorem:representing} does not need any assumptions on \(g\) or \(h_n\) beyond the fact that they are probability densities.
  \item If \(g\) is an even function, then \(\tilde \varphi_\alpha(\omega)\) is even and purely real.
  \item Theorem~\ref{theorem:representing}\ref{theorem:representing:bound} holds for any \(g\),
    but can be made sharper with information about a particular choice of
    \(g\). See, for example, the proof of
    Theorem~\ref{theorem:rates}\ref{theorem:rates:laplace}.
  \item Theorem \ref{theorem:representing}\ref{theorem:representing:bound} equivalently says that,
    with \(R_\alpha\) denoting the operator which maps \(h_n \mapsto
    f_n^\alpha\), the operator norm has a bound \(\|R_\alpha\| \leq
    C\alpha^{-\frac12}\), showing that the solution operator for each
    \(\alpha\) is bounded.
  \item If the density estimate is a kernel density estimate
    \(h_n^\lambda\) with kernel appropriate for the DKE
    (e.g.\ bandlimited), then for fixed data and bandwidth \(\lambda\),
    if we let \(\alpha \to 0\), we have that \(f_n^\alpha \to
    f_n^\lambda\), i.e.\ we obtain the DKE defined in
    Equation~\eqref{eqn:dkde}.
  \end{itemize}
\end{remark}

\if1\showproofs
{
  \begin{proof}[Proof of Theorem \ref{theorem:representing}]
~\paragraph{\ref{theorem:representing:mult}:} By Theorem 3.1 of
\cite{locker_regularization_1980}, a function \(f^\alpha_n\) minimizes
the Tikhonov functional \(G^\alpha_n(f)\) if and only if
\(f^\alpha_n \in \mathcal{D}(L^*L)\) and \(f^\alpha_n\) satisfies the
Euler-Lagrange equation
\(\label{eqn:euler}
(T^*T + \alpha L^*L)f^\alpha_n = T^* h_n.
\)
By Fact \ref{fact:adjoints}, this corresponds to
\(
g \star g*f^\alpha_n + \alpha L^*Lf_n^\alpha = g \star h_n,
\)
where \(g\star u(t) = \int g(x-t) u(x)\,dx\), so taking Fourier transforms yields (see Fact \ref{fact:adjoints} for details)
\(
|\tilde g(\omega)|^2 \tilde f^\alpha_n(\omega) + \alpha \omega^{2m} \tilde f^\alpha_n(\omega) = \overline{\tilde g(\omega)} \tilde h_n(\omega),
\)
and re-arranging gives
\[
\tilde f^\alpha_n(\omega) = \frac{\overline{\tilde g(\omega)}}{|\tilde g(\omega)|^2 + \alpha \omega^{2m}} \tilde h_n(\omega) = \tilde \varphi_\alpha(\omega) \tilde h_n(\omega),
\]
as needed.
\paragraph{\ref{theorem:representing:conv} and \ref{theorem:representing:bound}:} 

It will be convenient to prove \ref{theorem:representing:bound} first. We prove the equivalent
inequality that for all \(\omega\), \(\sqrt{\alpha}|\tilde
\varphi_\alpha(\omega)| \leq C. \) We do this by demonstrating two
facts: first, that for all \(\omega\), \(\sqrt{\alpha}|\tilde
\varphi_\alpha(\omega)| < \frac12 |\omega|^{-m}\), so that if we can
bound \(\sqrt{\alpha}|\tilde \varphi_\alpha(\omega)|\) on a
neighborhood of zero, we are finished, since the bound decreases as
\(|\omega| \to \infty\). Second, we show that \(\sqrt{\alpha}|\tilde
\varphi_\alpha(\omega)| \leq \frac{\sqrt{M}}{|\tilde g(\omega)|}\), and that
on a neighborhood \(|\omega| \leq \varepsilon\) of zero, \(\tilde
g(\omega)\) is bounded away from zero: \(0 < c < |\tilde g(\omega)| \leq
1\), and take \(C = \max \{\sqrt{M}/c,\frac12\varepsilon^{-m}\}\).

For the first, apply the inequality \(x+y \geq 2\sqrt{xy}\) for
\(x,y>0\) to the denominator of \(\tilde \varphi_\alpha\):
\begin{equation}
    \sqrt{\alpha}|\tilde \varphi_\alpha(\omega)| = \sqrt{\alpha}\frac{|\tilde g(\omega)|}{|\tilde g(\omega)|^2 + \alpha \omega^{2m}} \leq \sqrt{\alpha}\frac{|\tilde g(\omega)|}{2\sqrt{\alpha |\tilde g(\omega)|^2 \omega^{2m}}} = \frac12 |\omega|^{-m}.
\end{equation}
For the second,
\begin{equation}
  \sqrt{\alpha}|\tilde \varphi_\alpha(\omega)| = \sqrt{\alpha}\frac{|\tilde g(\omega)|}{|\tilde g(\omega)|^2 + \alpha \omega^{2m}} \leq \sqrt{\alpha}\frac{|\tilde g(\omega)|}{|\tilde g(\omega)|^2} \leq \sqrt{M}|\tilde g(\omega)|^{-1},
\end{equation} where the first inequality is because \(\alpha
\omega^{2m} > 0\), and the second inequality is by the assumption that
\(\alpha < M\). Finally, to see that \(\tilde g\) is bounded away from
zero on a neighborhood of zero, recall that \(\tilde g\) is the
Fourier transform of a probability density \(g\). Thus \(\tilde g(0) =
1\), and \(\tilde g\) is continuous, proving \ref{theorem:representing:bound}.

Now, we will demonstrate that then \(\tilde \varphi_\alpha \in
L_2(\mathbb{C})\), so that the Fourier inversion in Equation
\eqref{eqn:varphi} is legitimate. By the arguments proving (iv), we
have also found a square-integrable function
\(
b(\omega) = \alpha^{-\frac12}(\mathds{1}_{|\omega| < \varepsilon}C + \frac12\mathds{1}_{|\omega| \geq \varepsilon}|\omega|^{-m}),
\) such that \(|b(\omega)| \geq |\tilde
\varphi_\alpha(\omega)|\). Thus, \(\int |\tilde
\varphi_\alpha(\omega)|^2\,d\omega \leq \int |b(\omega)|^2\,d\omega
<\infty,\) and \(\tilde \varphi_\alpha \in L_2(\mathbb{C})\).  Now,
(ii) follows from (i) and the well-known properties of the Fourier
transform.
\paragraph{\ref{theorem:representing:kernel} and \ref{theorem:representing:unitint}-\ref{theorem:representing:apriorisup}:} Deferred to Supplemental Proof~\ref{defproof:theorem:representing}.
\end{proof}


} \fi

\if0\showproofs
{
  \elide
} \fi

Regularized solutions are well-behaved approximations to a poorly
behaved exact problem, and the Fourier view of our estimator gives a
nice picture of the manner of approximation. Suppose briefly that
\(\tilde g\) is even and non-vanishing. Taking the Fourier transform
of Equation~\eqref{eqn:approxopeq} reduces the convolution to
multiplication, giving \(\tilde g \tilde v = \tilde h_n\), so that we
may write \(\tilde v = \tilde h_n / \tilde g\). Thus in Fourier space,
exact deconvolution of \(h_n\) corresponds to multiplying \(\tilde
h_n\) by \(1/\tilde g\). In
Theorem~\ref{theorem:representing}\ref{theorem:representing:mult}, we
see that in Fourier space, our regularized solution \(\tilde
f_n^\alpha\) corresponds to multiplying \(\tilde h_n\) by this
\(\tilde \varphi_\alpha\) function. Inspection of \(\tilde
\varphi_\alpha(\omega)\) shows that when \(|\omega|\) is small,
\(\tilde \varphi_\alpha(\omega) \approx 1/\tilde g(\omega)\), but that
when \(|\omega|\) is large, the \(\alpha \omega^{2m}\) term dominates
the expression and \(\varphi_\alpha(\omega) \approx 0\), since
\(\tilde g(\omega)\) is bounded. Thus multiplying by \(\tilde
\varphi_\alpha\) performs similarly to multiplying by \(1/\tilde g\)
at low frequencies, but \(\tilde \varphi_\alpha\) prevents
high-frequency features of \(h_n\) from transferring to
\(f_n^\alpha\). This is pictured in Figure~\ref{fig:approxinv} for
Gaussian \(g\) and a variety~of~\(\alpha\).


\subsection{Decomposing the error}

To analyze the error \(f_n^\alpha - f\) , it is useful to introduce a
non-random function \(f^\alpha\) for which \(f^\alpha - f\) represents
the systematic error induced by solving the \(\alpha\)-regularized
problem in place of the exact problem.

\begin{defn}
  The \textbf{\(\alpha\)-smoothed \(f\)}, denoted \(f^\alpha\), is given by
    \(f^\alpha = \argmin_{v \in H^m(\mathbb{R})}\;G(v;h,\alpha).\)
\end{defn}

\begin{remark}
  The \(\alpha\)-smoothed \(f\) is the smoothness-penalized
deconvolution of the exact data~\(h\). In Supplement
Proposition~\ref{prop:alphasmoothed}, it is shown to have
representations \(f^\alpha = \varphi_\alpha*h\) and \(\tilde f^\alpha
= \tilde \varphi_\alpha \tilde h\), and approximates \(f\) in the
sense that \(\|f^\alpha - f\| \to 0\) as \(\alpha \to 0\).
\end{remark}

As the next lemma shows, an appealing property of \(f^\alpha\) is
that, for fixed \(\alpha\), \(\mathbb{E} \|f_n^\alpha - f^\alpha\|^2\)
becomes small when \(\delta_n^2 = \mathbb{E}\|h_n-h\|^2\) gets
smaller, in contrast to the issue with exact deconvolution outlined in
Proposition~\ref{prop:discontinuous}.
\begin{lemma}\label{lem:randompart}
  Assume~\ref{assumption:fisl2}. There is a \(C\) depending only on \(g\), such that for each \(\alpha>0\), we have \(\mathbb{E} \|f_n^\alpha - f^\alpha\|^2 \leq C\delta_n^2/\alpha\)
\end{lemma}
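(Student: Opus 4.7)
The plan is to use the convolution representation of the estimator given by Theorem~\ref{theorem:representing}\ref{theorem:representing:conv}, which writes $f_n^\alpha = \varphi_\alpha * h_n$, together with the analogous identity $f^\alpha = \varphi_\alpha * h$ noted in the remark following the definition of $f^\alpha$. Subtracting, the error is
\begin{equation}
f_n^\alpha - f^\alpha \;=\; \varphi_\alpha * (h_n - h),
\end{equation}
so the problem reduces to bounding the $L_2$ norm of a convolution whose multiplier $\tilde \varphi_\alpha$ is already controlled by Theorem~\ref{theorem:representing}\ref{theorem:representing:bound}.

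Next, I would move to the Fourier side. Since $\varphi_\alpha, h_n-h \in L_2(\mathbb{R})$, taking Fourier transforms turns the convolution into a product, and by the Plancherel identity
\begin{equation}
\|f_n^\alpha - f^\alpha\|^2 \;=\; \frac{1}{2\pi}\,\|\tilde \varphi_\alpha \cdot (\tilde h_n - \tilde h)\|^2.
\end{equation}
Bounding $|\tilde \varphi_\alpha|$ pointwise by its supremum and invoking Theorem~\ref{theorem:representing}\ref{theorem:representing:bound}, which gives $\sup_\omega |\tilde \varphi_\alpha(\omega)| \le C\alpha^{-1/2}$ with $C$ depending only on $g$ (for $\alpha$ below a fixed constant; the lemma can absorb any needed constant adjustment), we obtain
\begin{equation}
\|f_n^\alpha - f^\alpha\|^2 \;\le\; \frac{C^2}{2\pi\,\alpha}\,\|\tilde h_n - \tilde h\|^2 \;=\; \frac{C^2}{\alpha}\,\|h_n - h\|^2,
\end{equation}
by a second application of Plancherel.

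Finally, I would take expectations and recall the definition $\delta_n^2 = \mathbb{E}\|h_n - h\|^2$ to conclude $\mathbb{E}\|f_n^\alpha - f^\alpha\|^2 \le C^2 \delta_n^2/\alpha$, which is the claimed bound (after relabeling the constant). Assumption~\ref{assumption:fisl2} enters only to ensure that $h = g*f$ is itself in $L_2(\mathbb{R})$ (via Young's inequality as noted in the Assumptions section), so that $h_n - h$ is an $L_2$ difference and the Plancherel steps are legitimate; no other assumptions on $g$ or on $h_n$ beyond their being probability densities are needed. There is no real obstacle here: the whole argument is two Plancherel applications wrapped around the supremum bound from Theorem~\ref{theorem:representing}, with the only subtlety being a bookkeeping one about the range of $\alpha$ for which the constant $C$ in the supremum bound applies.
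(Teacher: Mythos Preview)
Your proposal is correct and follows essentially the same route as the paper: Plancherel to pass to the Fourier side, the pointwise supremum bound on $\tilde\varphi_\alpha$ from Theorem~\ref{theorem:representing}\ref{theorem:representing:bound}, and another Plancherel to return, followed by taking expectations. The only cosmetic difference is that you phrase the starting point via the convolution representation $f_n^\alpha - f^\alpha = \varphi_\alpha * (h_n - h)$, whereas the paper writes the Fourier-side identity $\tilde f_n^\alpha - \tilde f^\alpha = \tilde\varphi_\alpha(\tilde h_n - \tilde h)$ directly.
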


\if1\showproofs
{
  \begin{proof}[Proof of Lemma \ref{lem:randompart}]
By the Plancherel Theorem,
\begin{equation}\begin{aligned}
\|f_n^\alpha - f^\alpha\|^2 &= \frac{1}{2\pi}\|\tilde f^\alpha_n - \tilde f^\alpha\|^2\\
 &= \frac{1}{2\pi}\int | \varphi_\alpha(\omega) |^2 |\tilde h_n(\omega) - \tilde h(\omega) |^2\,d\omega\\
&\leq \sup_\omega |\tilde \varphi_\alpha(\omega)|^2 \|h_n - h\|^2 \leq C\|h_n - h\|^2/\alpha
\end{aligned}\end{equation}
where the second inequality is by Theorem~\ref{theorem:representing}\ref{theorem:representing:bound}. Taking expectations gives the result.
\end{proof}


} \fi

\if0\showproofs
{
  \elide
} \fi

\begin{cor}
\label{cor:upper}
Assume~\ref{assumption:fisl2}. For sufficiently small \(\alpha\), we have the upper bound
\[
\mathbb{E} \|f_n^\alpha - f\|^2 \leq C\delta_n^2/\alpha + 2\|f^\alpha - f\|^2.
\]
\end{cor}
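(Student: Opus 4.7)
The plan is to carry out a standard bias--variance style decomposition of the error, splitting $f_n^\alpha - f$ into a random part $f_n^\alpha - f^\alpha$, already controlled by Lemma~\ref{lem:randompart}, and a deterministic regularization-bias part $f^\alpha - f$, which is a quantity about the regularized exact solution that is treated as given here (its behavior in $\alpha$ will be the subject of later results).

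Concretely, I would add and subtract $f^\alpha$ inside the norm and then apply the elementary inequality $\|x + y\|^2 \leq 2\|x\|^2 + 2\|y\|^2$ (which is just $(a+b)^2 \leq 2a^2 + 2b^2$ specialized to $L_2$ norms, or equivalently a convexity/Cauchy--Schwarz argument). This yields, pointwise in $\omega$,
\begin{equation}
\|f_n^\alpha - f\|^2 \;=\; \|(f_n^\alpha - f^\alpha) + (f^\alpha - f)\|^2 \;\leq\; 2\|f_n^\alpha - f^\alpha\|^2 + 2\|f^\alpha - f\|^2.
\end{equation}
Taking expectations and noting that $f^\alpha$ and $f$ are deterministic, the second term passes through unchanged, giving
\begin{equation}
\mathbb{E}\|f_n^\alpha - f\|^2 \;\leq\; 2\,\mathbb{E}\|f_n^\alpha - f^\alpha\|^2 + 2\|f^\alpha - f\|^2.
\end{equation}

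The next step is to invoke Lemma~\ref{lem:randompart}, which under Assumption~\ref{assumption:fisl2} bounds $\mathbb{E}\|f_n^\alpha - f^\alpha\|^2 \leq C' \delta_n^2 / \alpha$ for some constant $C'$ depending only on $g$, provided $\alpha$ is below the threshold $M$ from Theorem~\ref{theorem:representing}\ref{theorem:representing:bound} (which supplies the bound $\sup_\omega |\tilde\varphi_\alpha(\omega)| \leq C\alpha^{-1/2}$ used inside the lemma). This is exactly what ``sufficiently small $\alpha$'' refers to in the statement. Absorbing the factor of $2$ into the constant, i.e.\ setting $C = 2C'$, the first term becomes $C\delta_n^2/\alpha$, and combining with the deterministic term yields the claimed bound
\begin{equation}
\mathbb{E}\|f_n^\alpha - f\|^2 \;\leq\; C\delta_n^2/\alpha + 2\|f^\alpha - f\|^2.
\end{equation}

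There is no real obstacle here; the proof is a one-line triangle-inequality argument wrapped around Lemma~\ref{lem:randompart}. The only bookkeeping point worth flagging is that the constant $C$ in the corollary is not the same as the one in the lemma (it picks up the factor of $2$ from the decomposition), and that the threshold on $\alpha$ is inherited from Theorem~\ref{theorem:representing}\ref{theorem:representing:bound} rather than introduced here.
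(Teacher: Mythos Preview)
Your proof is correct and follows exactly the same approach as the paper: decompose via $f^\alpha$, apply $(a+b)^2 \leq 2a^2 + 2b^2$, take expectations, and invoke Lemma~\ref{lem:randompart}. Your version is simply more explicit about the bookkeeping (absorbing the factor of $2$ into $C$ and tracing the ``sufficiently small $\alpha$'' back to Theorem~\ref{theorem:representing}\ref{theorem:representing:bound}), which the paper leaves implicit.
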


\if1\showproofs
{
  \begin{proof}[Proof of Corollary \ref{cor:upper}]
  Note that \((a+b)^2 \leq 2a^2 + 2b^2\), which can be seen by expanding \(0 \leq (a-b)^2\), adding \(a^2 + b^2\) to both sides, and re-arranging. Then the result follows from the triangle inequality and Lemma~\ref{lem:randompart}.
\end{proof}


} \fi

\if0\showproofs
{
  \elide
} \fi

The rate at which \(\|f^\alpha - f\| \to 0\) with \(\alpha\) depends
intimately on the particular form of \(g\). In
Lemma~\ref{lem:systematic_bounds}, we present upper bounds for
\(\|f^\alpha - f\|^2\) in terms of \(\alpha\).

\begin{lemma}\label{lem:systematic_bounds}
  Assume~\ref{assumption:fisl2},~\ref{assumption:fkfour}. Then,
  with \(W(\,\cdot\,)\) denoting the principal branch of the Lambert W function,
  \begin{enumerate}[label=(\roman*)]
  \item\label{lem:systematic_bounds:normal} (Normal errors) If \(g(x) = (2\pi)^{-1} e^{-x^2/2}\), 
    then
    \[
      \|f^\alpha - f\|^2 \leq \frac{C}{m^kW(m^{-1}\alpha^{-\frac{1}{m}})^k} \sim
      \frac{C}{m^k\log(m^{-1}\alpha^{-\frac{1}{m}})^k}
    \]
  \item\label{lem:systematic_bounds:cauchy} (Cauchy errors) If \(g(x) = \frac{1}{\pi(1+x^2)}\), then
  \[
    \|f^\alpha - f\|^2 \leq \frac{C}{m^{2k}W(m^{-1}\alpha^{-\frac{1}{2m}})^{2k}} \sim
    \frac{C}{m^{2k}\log(m^{-1}\alpha^{-\frac{1}{2m}})^{2k}}
  \]
  \item\label{lem:systematic_bounds:laplace} (Laplace errors) If \(g(x) = \frac{1}{2}e^{-|x|}\), then
  \[
    \|f^\alpha - f\|^2 \leq C\left ( \frac{1}{4\alpha} \right )^{-\frac{k}{m+1}},
  \]
  \end{enumerate}
\end{lemma}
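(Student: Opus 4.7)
The plan is to express the bias in Fourier space and reduce each of the three bounds to controlling the supremum of a single Fourier multiplier. Combining the representation $\tilde f^\alpha = \tilde\varphi_\alpha \tilde h$ from Supplement Proposition~\ref{prop:alphasmoothed} with $\tilde h = \tilde g \tilde f$ gives
\begin{equation}
\tilde f^\alpha(\omega) - \tilde f(\omega) \;=\; -\,\frac{\alpha\omega^{2m}}{|\tilde g(\omega)|^2+\alpha\omega^{2m}}\,\tilde f(\omega),
\end{equation}
so by Plancherel, after factoring out $\omega^{2k}$ to invoke Assumption~\ref{assumption:fkfour},
\begin{equation}
\|f^\alpha - f\|^2 \;\leq\; \frac{M_\alpha}{2\pi}\int\omega^{2k}|\tilde f(\omega)|^2\,d\omega,
\end{equation}
where $M_\alpha := \sup_\omega F_\alpha(\omega)$ and $F_\alpha(\omega) := \alpha^2\omega^{4m-2k}/(|\tilde g(\omega)|^2+\alpha\omega^{2m})^2$. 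The integral is finite by~\ref{assumption:fkfour}, and $M_\alpha$ is finite because $F_\alpha$ is continuous and behaves like $\alpha^2\omega^{4m-2k}$ near $\omega=0$ (bounded since $k\leq 2m$). Each part of the lemma now reduces to bounding $M_\alpha$ explicitly in its case.

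The unifying idea is to identify the \emph{transition frequency} $\omega^* > 0$ solving $|\tilde g(\omega^*)|^2 = \alpha(\omega^*)^{2m}$, across which the dominant summand in the denominator switches. On $|\omega|>\omega^*$, the bound $|\tilde g|^2+\alpha\omega^{2m}\geq\alpha\omega^{2m}$ yields $F_\alpha(\omega)\leq\omega^{-2k}\leq(\omega^*)^{-2k}$; on $|\omega|\leq\omega^*$, the bound $|\tilde g|^2+\alpha\omega^{2m}\geq|\tilde g|^2$ combined with the definition of $\omega^*$ and the monotonicity of $\alpha^2\omega^{4m-2k}/|\tilde g|^4$ also gives an $(\omega^*)^{-2k}$-order bound. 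Thus $M_\alpha \lesssim (\omega^*)^{-2k}$, and the rates reduce to solving for $\omega^*$ in each case.

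For Normal errors, $|\tilde g|^2 = e^{-\omega^2}$ makes the transition equation transcendental: $(\omega^*)^2 + m\log(\omega^*)^2 = \log(1/\alpha)$. Setting $w=(\omega^*)^2/m$ brings it to $we^w = m^{-1}\alpha^{-1/m}$, with Lambert W solution $w = W(m^{-1}\alpha^{-1/m})$, and thus $(\omega^*)^{-2k} = (mW(m^{-1}\alpha^{-1/m}))^{-k}$. The Cauchy case follows the same pattern with $|\tilde g|^2 = e^{-2|\omega|}$, the substitution $w = |\omega^*|/m$, and $|\omega^*| = mW(m^{-1}\alpha^{-1/(2m)})$. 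In both cases the $\log$-asymptotic follows from $W(x)\sim\log x$ as $x\to\infty$. For Laplace, $|\tilde g|^2 = (1+\omega^2)^{-2}$ gives the \emph{algebraic} equation $(\omega^*)^{2m}(1+(\omega^*)^2)^2 = 1/\alpha$, whose asymptotic solution is an explicit power of $\alpha$ yielding the stated polynomial rate.

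The main obstacle is the Laplace case, where the precise form $C(1/(4\alpha))^{-k/(m+1)}$ is more delicate than the crude transition argument suggests: the factor of $4$ should emerge from the AM-GM lower bound $(|\tilde g|^2+\alpha\omega^{2m})^2 \geq 4\alpha|\tilde g|^2\omega^{2m}$, but obtaining the claimed exponent cleanly appears to require a \emph{weighted} AM-GM with exponents matched to $k$ and $m$, together with a careful split of the integration range into low and high frequencies. The super-smooth cases, despite the exotic appearance of the Lambert $W$ function, reduce to routine algebra once the transcendental equation is written down.
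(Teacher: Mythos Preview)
Your Fourier-multiplier setup and the reduction to bounding $M_\alpha=\sup_\omega F_\alpha(\omega)$ are exactly right, and your transition-frequency argument for the Normal and Cauchy cases is correct and complete: the equation $|\tilde g(\omega^*)|^2=\alpha(\omega^*)^{2m}$ reduces to $we^w=m^{-1}\alpha^{-1/m}$ (respectively $m^{-1}\alpha^{-1/(2m)}$), the Lambert~$W$ solution gives $(\omega^*)^{-2k}$ in the stated form, and the two-sided bound $F_\alpha(\omega)\le(\omega^*)^{-2k}$ follows cleanly from the monotonicity of $|\tilde g|$ together with $4m-2k\ge 0$. The paper's argument (deferred to the supplement) almost certainly follows the same route, since the appearance of the Lambert~$W$ function in the statement is precisely the signature of solving that transcendental crossover equation.

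The Laplace case, however, is not settled by what you wrote. Your transition equation $(\omega^*)^{2m}(1+(\omega^*)^2)^2=1/\alpha$ has asymptotic solution $\omega^*\sim\alpha^{-1/(2m+4)}$, which gives $M_\alpha\lesssim(\omega^*)^{-2k}\sim\alpha^{k/(m+2)}$, \emph{not} the $\alpha^{k/(m+1)}$ in the lemma. Your proposed fix---a weighted AM-GM on the denominator---does not improve the exponent: writing $|\tilde g|^2+\alpha\omega^{2m}\ge c_\theta\,|\tilde g|^{2\theta}(\alpha\omega^{2m})^{1-\theta}$ and choosing $\theta$ so that the resulting bound on $F_\alpha$ stays finite as $|\omega|\to\infty$ forces $\theta\le k/(2(m+2))$, and this again yields $\alpha^{k/(m+2)}$. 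One can in fact check directly that $F_\alpha$ evaluated at the transition frequency is of exact order $\alpha^{k/(m+2)}$, so $M_\alpha$ itself cannot satisfy the sharper bound $C\alpha^{k/(m+1)}$ via the $\sup$-reduction. Note that Theorem~\ref{theorem:rates}\ref{theorem:rates:laplace} states the bias term with exponent $k/(m+2)$, consistent with your transition-point calculation; so either the lemma as printed has $m+1$ where $m+2$ is intended, or the supplemental proof uses something beyond the $\sup F_\alpha$ reduction that you have not identified. Either way, your Laplace argument as written does not establish the bound claimed, and the gesture toward weighted AM-GM does not close the gap.
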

The asymptotic equivalences in the first two parts follow from Fact~\ref*{fact:lambertW}\ref{fact:lambertW:asympt}.

\if1\showproofs
{
  Proof deferred to Supplemental Proof~\ref{defproof:lem:systematic_bounds}.
} \fi

\if0\showproofs
{
  \elide
} \fi

\section{Asymptotics}
\label{sec:asymptotics}
\subsection{Consistency and Rates of Convergence}

If Assumption~\ref{assumption:gzeros} holds, then \(\|f^\alpha - f\|
\to 0\) as \(\alpha \to 0\), and the upper bound in
Corollary~\ref{cor:upper} provides a sufficient condition for
\(L_2(\mathbb{R})\)-consistency of \(f_n^\alpha\):
\begin{theorem}{(\(L_2(\mathbb{R})\) consistency)}\label{theorem:consistency}
Assume~\ref{assumption:fisl2},~\ref{assumption:gzeros}. Assume that \(\delta_n^2 \to 0\), and \(\alpha_n\) is chosen so that
\(\delta_n^2/\alpha_n \to 0\) and \(\alpha_n \to 0\). Then
\[
\lim_{n \to \infty} \mathbb{E} \| f^{\alpha_n}_n - f\|^2 = 0.
\]
\end{theorem}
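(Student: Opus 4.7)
The plan is to reduce the claim to an application of Corollary \ref{cor:upper} combined with a separate deterministic fact, namely that $\|f^\alpha - f\| \to 0$ as $\alpha \to 0$. Corollary \ref{cor:upper} gives, for sufficiently small $\alpha_n$,
\begin{equation}
  \mathbb{E}\|f_n^{\alpha_n} - f\|^2 \;\leq\; C\,\delta_n^2/\alpha_n \;+\; 2\|f^{\alpha_n} - f\|^2,
\end{equation}
so it suffices to show each summand tends to zero. The first term vanishes by hypothesis ($\delta_n^2/\alpha_n \to 0$), and the second will vanish provided we establish the deterministic convergence of $f^\alpha$ to $f$ in $L_2$ as $\alpha \to 0$, at which point the assumption $\alpha_n \to 0$ finishes the job.

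The only genuine work is proving $\|f^\alpha - f\| \to 0$. I would do this in the Fourier domain, using Plancherel and the representation $\tilde f^\alpha = \tilde\varphi_\alpha \tilde h = \tilde\varphi_\alpha \tilde g\,\tilde f$ noted in the remark following the definition of $f^\alpha$. A direct computation from the formula for $\tilde\varphi_\alpha$ in \eqref{eqn:varphi} gives
\begin{equation}
  \tilde f^\alpha(\omega) - \tilde f(\omega) \;=\; \bigl(\tilde\varphi_\alpha(\omega)\tilde g(\omega) - 1\bigr)\tilde f(\omega) \;=\; -\,\frac{\alpha \omega^{2m}}{|\tilde g(\omega)|^2 + \alpha \omega^{2m}}\,\tilde f(\omega),
\end{equation}
so that the integrand of $\|\tilde f^\alpha - \tilde f\|^2$ is pointwise dominated by $|\tilde f(\omega)|^2$, which is integrable by \ref{assumption:fisl2} and Plancherel. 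On the complement of the measure-zero set $\{\tilde g = 0\}$ (which exists by \ref{assumption:gzeros}) the multiplier tends pointwise to $0$ as $\alpha \to 0$. Hence dominated convergence yields $\|\tilde f^\alpha - \tilde f\| \to 0$, and Plancherel then gives $\|f^\alpha - f\| \to 0$.

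With this in hand, plugging $\alpha = \alpha_n$ into the deterministic convergence shows $\|f^{\alpha_n} - f\|^2 \to 0$, and combined with the hypothesis $\delta_n^2/\alpha_n \to 0$ applied to the first term of Corollary \ref{cor:upper}, we conclude $\mathbb{E}\|f_n^{\alpha_n} - f\|^2 \to 0$. The main (and only nontrivial) obstacle is the dominated-convergence argument for the bias-type term, which critically uses \ref{assumption:gzeros} to ensure pointwise a.e.\ convergence of the Fourier multiplier; without this assumption the multiplier fails to vanish on the zero set of $\tilde g$, and the argument breaks down. Everything else is bookkeeping on top of previously established results.
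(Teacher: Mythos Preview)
Your proposal is correct and follows essentially the same route as the paper: invoke Corollary~\ref{cor:upper} to bound $\mathbb{E}\|f_n^{\alpha_n}-f\|^2$ by $C\delta_n^2/\alpha_n + 2\|f^{\alpha_n}-f\|^2$, then appeal to the deterministic fact $\|f^\alpha-f\|\to 0$ as $\alpha\to 0$. The paper simply cites this last fact as Proposition~\ref{prop:alphasmoothed}(iv) from the supplement, whereas you spell out the Fourier-domain dominated-convergence argument; your computation of the multiplier and its pointwise and dominated behavior is exactly what that supplemental proof amounts to.
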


\if1\showproofs
{
  \begin{proof}[Proof of Theorem \ref{theorem:consistency}]
  This follows immediately from Corollary~\ref{cor:upper} and Proposition~\ref{prop:alphasmoothed}(iv).
\end{proof}


} \fi

\if0\showproofs
{
  \elide
} \fi

In deriving rates of convergence for ill-posed problems, it is
typically assumed that the solution \(f\) is drawn from a ``source
set,'' assuming some \emph{a priori} degree of smoothness
\cite[Section 3.2]{engl_regularization_1996}. In
\cite{nair_trade-off_1997} Theorem 5.1 and \cite{engl_optimal_1985}
Theorem 3.5, an abstract version of this Tikhonov problem is analyzed,
and they find fast \(\delta^{4/3}\) rates of convergence (in a
stronger norm) compared to the often logarithmic rates in the
statistical literature. The price is that strong assumptions are made
on the target density. In \cite{engl_optimal_1985}, it is assumed that
\(f \in \mathcal{D}(L^*L)\) and \(L^*Lf \in \mathcal{R}(T^*T)\). With
\(T\) the operator that convolves a function with \(g\) and \(L\) the
\(m\)th-derivative operator, this assumption requires that \(f \in
H^{2m}(\mathbb{R})\), and \(f^{(2m)} = g \star g * \psi\) for some
\(\psi \in L_2(\mathbb{R})\), which we express in terms of the Fourier
transforms in the theorem. Nothing is required of \(g\), since their
result holds for any bounded operator \(T\), and convolution with a
probability measure is bounded on \(L_2(\mathbb{R})\).

Below is an analogue of those abstract results, in an
explicitly statistical framework, and with a novel proof. The proof in
the present framework turns out to be quite simple.

\begin{theorem}{(Rates when \(f\) is very smooth)}
\label{theorem:smoothrates}
Suppose \(\int |\omega^{2m}\tilde f(\omega)|^2\,d\omega < \infty\) and
\(|\tilde f(\omega)| = |\tilde g(\omega)|^2 |\omega^{-2m}\tilde
\psi(\omega)|\) for some \(\psi \in L_2(\mathbb{R})\) (Note that this
condition implies Assumption~\ref{assumption:fisl2}). Then for
sufficiently small
\(\alpha_n\),
\[
\mathbb{E}\|f^{\alpha_n}_n - f\|^2 \leq C_1 \delta_n^2/\alpha_n + C_2\alpha_n^2,
\]
and if \(\alpha_n = C_3 \delta_n^{\frac23}\), then
\[
\mathbb{E} \|f^{\alpha_n}_n - f\|^2 = O(\delta_n^\frac43).
\]
\end{theorem}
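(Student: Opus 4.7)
The plan is to combine Corollary~\ref{cor:upper}, which already gives the stochastic term \(C\delta_n^2/\alpha\), with a new bound \(\|f^\alpha - f\|^2 \leq C_2\alpha^2\) arising from the source condition. Once this latter bound is established, optimization over \(\alpha_n\) is standard calculus.

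First I would invoke the Fourier representation \(\tilde f^\alpha = \tilde\varphi_\alpha \tilde h = \tilde\varphi_\alpha \tilde g \tilde f\) from Theorem~\ref{theorem:representing}\ref{theorem:representing:mult} together with Proposition~\ref{prop:alphasmoothed} (which gives the analogous identity for \(f^\alpha\)). Since
\[
\tilde\varphi_\alpha(\omega)\tilde g(\omega) - 1 = \frac{|\tilde g(\omega)|^2 - |\tilde g(\omega)|^2 - \alpha\omega^{2m}}{|\tilde g(\omega)|^2 + \alpha\omega^{2m}} = -\frac{\alpha\omega^{2m}}{|\tilde g(\omega)|^2 + \alpha\omega^{2m}},
\]
we obtain the pointwise identity
\[
|\tilde f^\alpha(\omega) - \tilde f(\omega)|^2 = |\tilde f(\omega)|^2 \cdot \frac{\alpha^2\omega^{4m}}{(|\tilde g(\omega)|^2 + \alpha\omega^{2m})^2}.
\]

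Next I would substitute the source condition \(|\tilde f(\omega)|^2 = |\tilde g(\omega)|^4\,\omega^{-4m}\,|\tilde\psi(\omega)|^2\), which makes the powers of \(\omega\) cancel cleanly:
\[
|\tilde f^\alpha(\omega) - \tilde f(\omega)|^2 = \frac{\alpha^2 |\tilde g(\omega)|^4\,|\tilde\psi(\omega)|^2}{(|\tilde g(\omega)|^2 + \alpha\omega^{2m})^2} \leq \alpha^2 |\tilde\psi(\omega)|^2,
\]
using \(|\tilde g|^2 \leq |\tilde g|^2 + \alpha\omega^{2m}\). Integrating and applying Plancherel gives \(\|f^\alpha - f\|^2 \leq \alpha^2\|\psi\|^2\). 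Combined with Corollary~\ref{cor:upper}, this establishes the first displayed inequality with \(C_1 = C\) and \(C_2 = 2\|\psi\|^2\).

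For the second conclusion, I would balance the two terms in \(C_1\delta_n^2/\alpha + C_2\alpha^2\) by choosing \(\alpha_n \asymp \delta_n^{2/3}\); the stochastic term becomes \(O(\delta_n^{4/3})\) and the bias term becomes \(O(\delta_n^{4/3})\) simultaneously. The main obstacle I anticipate is essentially cosmetic: verifying that the source condition does imply Assumption~\ref{assumption:fisl2} (and the associated smoothness \(\int |\omega^{2m}\tilde f|^2 < \infty\)), which follows because \(|\tilde g|\leq 1\) gives \(|\tilde f| \leq |\omega^{-2m}\tilde\psi|\) and \(|\omega^{2m}\tilde f| \leq |\tilde g|^2|\tilde\psi| \leq |\tilde\psi|\), both in \(L_2\). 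No stronger conditions on \(g\) are needed because the factor \(|\tilde g|^4\) on the numerator always dominates the squared denominator factor \(|\tilde g|^4\) appearing when \(\alpha\omega^{2m}\) is negligible.
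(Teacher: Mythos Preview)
Your proposal is correct and follows essentially the same approach as the paper: compute \(\tilde f^\alpha - \tilde f = -\frac{\alpha\omega^{2m}}{|\tilde g|^2+\alpha\omega^{2m}}\tilde f\), substitute the source condition \(|\tilde f| = |\tilde g|^2|\omega^{-2m}\tilde\psi|\), bound the resulting fraction by \(1\) to get \(\|f^\alpha - f\|^2 \leq \alpha^2\|\psi\|^2\), and then invoke Corollary~\ref{cor:upper} and balance the two terms. Your additional verification that the source condition implies \(f\in L_2\) and \(\omega^{2m}\tilde f\in L_2\) is a nice touch not spelled out in the paper's proof.
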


\if1\showproofs
{
  \begin{proof}[Proof of Theorem \ref{theorem:smoothrates}]
Our task is to find the dependence of \(\|f^\alpha - f\|^2\) on
\(\alpha\). 
\begin{equation}\begin{aligned}
\|f^\alpha - f\|^2 &= \frac{1}{2\pi}\|\tilde f^\alpha - \tilde f\|^2\\
&= \frac{1}{2\pi} \int  \left | \frac{\alpha_n \omega^{2m}}{|\tilde g(\omega)|^2 + \alpha_n \omega^{2m}}\right |^2 ||\tilde g(\omega)|^2\omega^{-2m} \tilde \psi(\omega)|^2 \,d\omega,\\
&= \frac{\alpha_n^2}{2\pi} \int  \left | \frac{|\tilde g(\omega)|^2}{|\tilde g(\omega)|^2 + \alpha_n \omega^{2m}}\right |^2 |\tilde \psi(\omega)|^2 \,d\omega \leq \alpha_n^2 \|\psi\|^2,
\end{aligned}\end{equation}
which, combined with Corollary~\ref{cor:upper}, gives the bound. The upper bound is minimized by \(\alpha_n \propto \delta_n^\frac23\), in which case the upper bound becomes \(\mathbb{E}\|f_n - f\|^2 \leq C \delta_n^\frac43\), as needed.
\end{proof}


} \fi

\if0\showproofs
{
  \elide
} \fi

\begin{remark}
Note that Theorem~\ref{theorem:smoothrates} does not require Assumption~\ref{assumption:gzeros}; identifiability issues are sidestepped by the second assumption on \(\tilde f\), which guarantees that \(\tilde f\) is zero whenever \(\tilde g\) is zero.  
\end{remark}
\begin{remark}
  If \(Z\) has pdf \(\eta(z) \in H^{2m}(\mathbb{R})\), with \(\omega^{2m}\tilde \eta(\omega) = \tilde \psi(\omega)\), and \(E_1\) and \(E_2\) are independent with pdf \(g\), then the hypothesis of Theorem~\ref{theorem:smoothrates} is satisfied for the pdf \(f\) of \(X = Z + (E_1 - E_2)\).
\end{remark}

\begin{cor}\label{cor:smoothrates}
Assume the conditions of Theorem \ref{theorem:smoothrates}, and assume that \(\alpha_n = C\delta_n^\frac23\).\\
If \(h_n\) is a kernel density estimate with optimal choice of bandwidth, then
\begin{equation}
\mathbb{E} \|f^{\alpha_n}_n - f\|^2 = O(n^{-8/15}).
\end{equation}

\noindent If \(h_n\) is a histogram with optimal choice of bin widths, then
\begin{equation}
\mathbb{E} \|f^{\alpha_n}_n - f\|^2 = O(n^{-4/9}).
\end{equation}
\end{cor}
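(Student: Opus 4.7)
The plan is to chain Theorem~\ref{theorem:smoothrates} with textbook MISE bounds for kernel and histogram density estimators. Theorem~\ref{theorem:smoothrates} with the stated choice $\alpha_n = C\delta_n^{2/3}$ already delivers $\mathbb{E}\|f_n^{\alpha_n} - f\|^2 = O(\delta_n^{4/3})$, so the corollary reduces to substituting the optimal rate of $\delta_n^2 = \mathbb{E}\|h_n - h\|^2$ for each of the two estimators of $h$ and then checking that the standard smoothness assumptions on $h$ are in fact implied by the hypotheses of Theorem~\ref{theorem:smoothrates}.

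First I would verify the required smoothness of $h$. The assumption $\int |\omega^{2m}\tilde f(\omega)|^2\,d\omega < \infty$ together with $m \geq 1$ gives $f \in H^{2}(\mathbb{R})$. Since $\tilde h = \tilde g\, \tilde f$ and $|\tilde g(\omega)| \leq 1$ (as $g$ is a probability density), we have $\int |\omega^k \tilde h(\omega)|^2\,d\omega \leq \int |\omega^k \tilde f(\omega)|^2\,d\omega < \infty$ for $0 \leq k \leq 2m$; in particular $h \in H^2(\mathbb{R})$, so $h'$ and $h''$ lie in $L_2(\mathbb{R})$. These are the textbook regularity hypotheses used in the next step.

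Next I would invoke the classical MISE results. A kernel density estimator with a second-order kernel and optimal bandwidth $\lambda_n \propto n^{-1/5}$ attains $\delta_n^2 = O(n^{-4/5})$ when $h'' \in L_2(\mathbb{R})$; substituting into Theorem~\ref{theorem:smoothrates} gives $\mathbb{E}\|f_n^{\alpha_n} - f\|^2 = O\bigl((n^{-4/5})^{2/3}\bigr) = O(n^{-8/15})$. Likewise, a histogram with optimal bin width of order $n^{-1/3}$ attains $\delta_n^2 = O(n^{-2/3})$ when $h' \in L_2(\mathbb{R})$, so that $\mathbb{E}\|f_n^{\alpha_n} - f\|^2 = O\bigl((n^{-2/3})^{2/3}\bigr) = O(n^{-4/9})$.

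There is no real obstacle here; the proof is essentially bookkeeping. The only mildly nontrivial step is confirming the smoothness of $h$, which is handled by the elementary Fourier inequality above using $|\tilde g|\leq 1$. No further analysis of the regularization sequence $\alpha_n$ or of $f_n^{\alpha_n}$ beyond Theorem~\ref{theorem:smoothrates} is needed.
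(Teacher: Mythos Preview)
Your proposal is correct and follows essentially the same approach as the paper: invoke Theorem~\ref{theorem:smoothrates} to get $\mathbb{E}\|f_n^{\alpha_n}-f\|^2 = O(\delta_n^{4/3})$, then substitute the textbook MISE rates $\delta_n^2 = O(n^{-4/5})$ and $\delta_n^2 = O(n^{-2/3})$ for the optimally tuned KDE and histogram. The paper simply cites \cite{wand_kernel_1994} for these rates without verifying the smoothness of $h$, so your explicit check that $h \in H^2(\mathbb{R})$ via $|\tilde g|\leq 1$ is a welcome extra bit of care rather than a departure in method.
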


\if1\showproofs
{
  \begin{proof}[Proof of Corollary \ref{cor:smoothrates}]
If \(h_n\) is a kernel density estimate with optimal bandwidth, then
\(\delta_n^2 = \|h_n - h\|^2 = O(n^{-\frac45})\)
(\cite{wand_kernel_1994}, Section 2.5), and the result follows
immediately. Similarly, if \(h_n\) is a histogram with optimal binwidth, then by the same section,
\(\delta_n^2 = \|h_n - h\| = O(n^{-\frac23})\).
\end{proof}


} \fi

\if0\showproofs
{
  \elide
} \fi

The rates in Theorem~\ref{theorem:smoothrates} are appealing, but are found under conditions different than those typically assumed in the literature. Now, we will assume a particular form for \(g\)---either Gaussian, Cauchy, or Laplace---and leverage the approximation bounds for the \(\alpha\)-smoothed \(f\) from Lemma~\ref{lem:systematic_bounds} to derive rates of convergence under a weaker smoothness assumption on \(f\), namely Assumption~\ref{assumption:fkfour} that \(\int |\omega^k \tilde f(\omega)|^2\,d\omega < \infty\). This is a slight weakening of the assumption in \cite{zhang_fourier_1990}.

\begin{theorem}\label{theorem:rates}
  Assume~\ref{assumption:fkfour}. Then,
  \begin{enumerate}[label=(\roman*)]
  \item\label{theorem:rates:normal} (Normal errors) If \(g(x) = (2\pi)^{-1} e^{-x^2/2}\), then for \(\alpha\) small enough,
    \[
      \mathbb{E}\|f_n^\alpha - f\|^2 \leq C_1\delta_n^2/\alpha + \frac{C_2}{m^k\log(m^{-1}\alpha^{-\frac{1}{m}})^k}
    \]
    and if \(\alpha_n = \delta_n^2 W(\delta_n^{-\frac{2}{k}})^{k}\) then
    \(
      \mathbb{E}\|f_n^\alpha - f\|^2 = O([\log \delta_n^{-1}]^{-k}).
    \)
  \item\label{theorem:rates:cauchy} (Cauchy errors) If \(g(x) = \frac{1}{\pi(1+x^2)}\), then for \(\alpha\) small enough,
  \[
    \mathbb{E}\|f_n^\alpha - f\|^2 \leq C_1\delta_n^2/\alpha + \frac{C}{m^{2k}\log(m^{-1}\alpha^{-\frac{1}{2m}})^{2k}}
  \]
    and if \(\alpha_n = \delta_n^2 W(\delta^{-\frac{1}{k}})^{2k}\), then
  \(
    \mathbb{E}\|f_n^\alpha - f\|^2 = O([\log \delta_n^{-1}]^{-2k}).
  \)
\item\label{theorem:rates:laplace} (Laplace errors) If \(g(x) = \frac{1}{2}e^{-|x|}\), then for \(\alpha\) small enough,
  \[
    \mathbb{E}\|f_n^\alpha - f\|^2 \leq C_1 \delta_n^2 \alpha^{-\frac{2}{m+2}} + C_2 \alpha^{\frac{k}{m+2}},
  \]
  and if \(\alpha_n = \delta_n^{\frac{2(m+2)}{k+2}}\), then
  \(
    \mathbb{E}\|f_n^\alpha - f\|^2 = O(\delta_n^{\frac{2k}{k+2}}).
  \)
  \end{enumerate}
\end{theorem}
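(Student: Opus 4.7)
The proof will proceed by invoking Corollary~\ref{cor:upper} to split $\mathbb{E}\|f_n^\alpha-f\|^2$ into a variance-type term $C\delta_n^2/\alpha$ plus $2\|f^\alpha-f\|^2$, and then controlling the bias $\|f^\alpha-f\|^2$ with the error-density-specific bounds in Lemma~\ref{lem:systematic_bounds}. For parts \ref{theorem:rates:normal} and \ref{theorem:rates:cauchy} the proof is nearly immediate: I plug parts \ref{lem:systematic_bounds:normal} and \ref{lem:systematic_bounds:cauchy} of Lemma~\ref{lem:systematic_bounds} into Corollary~\ref{cor:upper}, replacing the Lambert-$W$ form with its logarithmic asymptotic equivalent for small $\alpha$ via Fact~\ref*{fact:lambertW}\ref{fact:lambertW:asympt}. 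To verify each stated rate I substitute the prescribed $\alpha_n$ and use $W(x)\sim\log x$ to check that the variance term $\delta_n^2/\alpha_n$ and the bias term are both $O([\log\delta_n^{-1}]^{-k})$ (Normal) or $O([\log\delta_n^{-1}]^{-2k})$ (Cauchy).

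The Laplace case \ref{theorem:rates:laplace} requires more work because the rate is polynomial and the generic variance bound $\delta_n^2/\alpha$ is too crude. First I sharpen Theorem~\ref{theorem:representing}\ref{theorem:representing:bound} by exploiting $\tilde g(\omega)=1/(1+\omega^2)$: multiplying numerator and denominator of $\tilde\varphi_\alpha$ by $(1+\omega^2)^2$ gives
\[
\tilde\varphi_\alpha(\omega) \;=\; \frac{1+\omega^2}{1+\alpha\,\omega^{2m}(1+\omega^2)^2},
\]
and a short calculus (or scaling) argument locates the maximum near $\omega^\ast\asymp\alpha^{-1/(2m+4)}$ with value $\asymp\alpha^{-1/(m+2)}$, so that $\sup_\omega|\tilde\varphi_\alpha(\omega)|^2\leq C\alpha^{-2/(m+2)}$. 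Threading this through the proof of Lemma~\ref{lem:randompart} yields the refined variance bound $\mathbb{E}\|f_n^\alpha-f^\alpha\|^2\leq C_1\delta_n^2\,\alpha^{-2/(m+2)}$.

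For the bias I obtain a matching $\|f^\alpha-f\|^2\leq C_2\,\alpha^{k/(m+2)}$ via interpolation. Starting from the Fourier identity
\[
\|f^\alpha-f\|^2 \;=\; \frac{1}{2\pi}\int \left[\frac{\alpha\,\omega^{2m}(1+\omega^2)^2}{1+\alpha\,\omega^{2m}(1+\omega^2)^2}\right]^2 |\tilde f(\omega)|^2\,d\omega
\]
and using the elementary inequality $t/(1+t)\leq t^\theta$ (valid for $t\geq 0$ and $0\leq\theta\leq 1$) with $\theta=k/(2(m+2))$, the $\omega$-dependent factor is dominated by $C\,\alpha^{k/(m+2)}(1+\omega^{2k})$, so Assumption~\ref{assumption:fkfour} combined with $f\in L_2(\mathbb{R})$ ensures integrability. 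Combining the two Laplace bounds and equating them in $\alpha$ gives $\alpha_n=\delta_n^{2(m+2)/(k+2)}$ and the claimed rate $\delta_n^{2k/(k+2)}$. The main obstacle is the refined supremum bound on $\tilde\varphi_\alpha$: the exponent $2/(m+2)$, rather than the generic $1$, comes precisely from the competition between $\tilde g$ and $\alpha\omega^{2m}$ at the optimal frequency $\omega^\ast$, and without it the variance term would not balance against the polynomial bias to produce the correct rate.
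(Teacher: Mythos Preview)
Your proposal is correct and follows essentially the same route the paper takes: Corollary~\ref{cor:upper} plus Lemma~\ref{lem:systematic_bounds} for parts \ref{theorem:rates:normal} and \ref{theorem:rates:cauchy}, and for \ref{theorem:rates:laplace} a Laplace-specific sharpening of the multiplier bound $\sup_\omega|\tilde\varphi_\alpha(\omega)|$ (the paper flags exactly this in the remark following Theorem~\ref{theorem:representing}). One small remark on the Laplace bias: the paper already has the bound $\|f^\alpha-f\|^2\le C\alpha^{k/(m+1)}$ available from Lemma~\ref{lem:systematic_bounds}\ref{lem:systematic_bounds:laplace}, which is tighter than the $\alpha^{k/(m+2)}$ you derive via the interpolation inequality $t/(1+t)\le t^\theta$; either suffices since the theorem records only the weaker exponent, and your self-contained derivation is perfectly valid.
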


\if1\showproofs
{
  Proof deferred to Supplemental Proof~\ref{defproof:theorem:rates}.
} \fi

\if0\showproofs
{
  \elide
} \fi

\begin{cor}\label{cor:ratesinpractice}
  Let \(k = 1\). Then if \(h_n\) is a KDE or histogram estimate with
  optimal bandwidth or bin choice, we have, assuming the conditions of
  Theorem~\ref{theorem:rates} hold,
  \begin{enumerate}[label=(\roman*)]
  \item (Normal errors) \(\mathbb{E}\|f_n^\alpha - f\|^2 = O([\log n]^{-1})\) for KDE and histogram.
  \item (Cauchy errors) \(\mathbb{E}\|f_n^\alpha - f\|^2 = O([\log n]^{-2})\) for KDE and histogram.
  \item (Laplace errors) \(\mathbb{E}\|f_n^\alpha - f\|^2 = O(n^{-\frac{4}{15}})\) for the KDE and \(\mathbb{E}\|f_n^\alpha - f\|^2 = O(n^{-\frac{2}{9}})\) for the histogram.
  \end{enumerate}
\end{cor}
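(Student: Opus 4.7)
The plan is a direct substitution: Theorem~\ref{theorem:rates} already gives $\mathbb{E}\|f_n^\alpha - f\|^2$ as a function of $\delta_n$ under the appropriate optimal choice of $\alpha_n$, so all that is needed is to plug in the standard MISE rates for KDE and histogram density estimates of $h$. From \citet{wand_kernel_1994}, a kernel density estimate with optimal bandwidth achieves $\delta_n^2 = O(n^{-4/5})$, and a histogram with optimal bin width achieves $\delta_n^2 = O(n^{-2/3})$. In either case $\delta_n^{-1} \to \infty$ at a polynomial rate in $n$, so $\log \delta_n^{-1}$ is of order $\log n$, and the $\alpha_n$ prescribed in Theorem~\ref{theorem:rates} tends to $0$, so the ``for $\alpha$ small enough'' hypothesis is eventually satisfied.

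For the Normal case, I would apply Theorem~\ref{theorem:rates}\ref{theorem:rates:normal} with $k=1$ and $\alpha_n = \delta_n^2 W(\delta_n^{-2})$, obtaining $\mathbb{E}\|f_n^{\alpha_n} - f\|^2 = O([\log \delta_n^{-1}]^{-1})$. Whether we use the KDE rate $\delta_n = O(n^{-2/5})$ or the histogram rate $\delta_n = O(n^{-1/3})$, we have $\log \delta_n^{-1} = c\log n + O(1)$ for a positive constant $c$, so in both cases the bound collapses to $O([\log n]^{-1})$. The Cauchy case is treated identically using Theorem~\ref{theorem:rates}\ref{theorem:rates:cauchy} and the choice $\alpha_n = \delta_n^2 W(\delta_n^{-1})^2$, giving the common rate $O([\log n]^{-2})$ for both estimators.

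The Laplace case is the only one where the choice of preliminary density estimator affects the rate, because the exponent of $\delta_n$ is polynomial rather than logarithmic. With $k=1$ and $\alpha_n = \delta_n^{2(m+2)/3}$, Theorem~\ref{theorem:rates}\ref{theorem:rates:laplace} gives $\mathbb{E}\|f_n^{\alpha_n} - f\|^2 = O(\delta_n^{2/3})$. Substituting the KDE rate $\delta_n^2 = O(n^{-4/5})$ yields $O(n^{-(4/5)(1/3)}) = O(n^{-4/15})$, while substituting the histogram rate $\delta_n^2 = O(n^{-2/3})$ yields $O(n^{-(2/3)(1/3)}) = O(n^{-2/9})$.

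There is no real obstacle here; the corollary is a mechanical consequence of Theorem~\ref{theorem:rates} and the well-known univariate density estimation rates. The only point meriting care is confirming admissibility of the prescribed $\alpha_n$, which is immediate since $\delta_n \to 0$ at a polynomial rate in all six sub-cases.
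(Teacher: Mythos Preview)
Your proposal is correct and is exactly the approach the paper intends: the paper does not spell out a proof for this corollary, but the parallel proof of Corollary~\ref{cor:smoothrates} simply substitutes the standard rates $\delta_n^2 = O(n^{-4/5})$ (KDE) and $\delta_n^2 = O(n^{-2/3})$ (histogram) from \cite{wand_kernel_1994} into the relevant theorem, which is precisely what you do here with Theorem~\ref{theorem:rates}.
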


For normal and Cauchy errors, Corollary~\ref{cor:ratesinpractice}
shows that the smoothness-penalized deconvolution estimate attains the
optimal rates derived in \cite{zhang_fourier_1990}. However, for
Laplace errors, the upper bound here is slower than the rate
\(n^{-2/7}\) attained by the deconvoluting kernel density estimator in
\cite{zhang_fourier_1990}. However, the SPeD estimator \emph{can}
attain the \(n^{-2/7}\) rate for a certain choice of estimator
\(h_n\). Recall (cf. Equation~(\ref{eqn:dkde})) that the DKE can be
thought of as involving a kernel estimate of \(h\) using a kernel
\(K(\cdot)\) which has quickly decaying Fourier transform; in
\cite{zhang_fourier_1990} the kernel is required to be
band-limited. If we use as our \(h_n\) a kernel estimator satisfying
the conditions in \cite{zhang_fourier_1990}, then the SPeD estimator
attains the \(n^{-2/7}\) rate.

\begin{prop}\label{prop:particularlaplace}
  Assume \(g\) is Laplace, and suppose \(\int |\omega \tilde
  f(\omega)|^2\,d\omega = C < \infty\). Let \(k(\cdot)\) be a pdf
  satisfying \(k(x) = k(-x)\), \(\int x^2k(x)\,dx < \infty\), \(\int
  |xk'(x)|\,dx < \infty\), and \(\tilde k(\omega) = 0\) for \(\omega
  \not\in [-1,1]\). Suppose that \(h_n\) is a kernel density estimate
  with kernel \(k\), i.e.\ \(h_n(y) = \frac1{n\lambda}\sum_{j=1}^n k\left
    (\frac{y-Y_j}{\lambda}\right)\). Suppose \(\lambda_n =
  c_0n^{-\frac17}\), and \(\alpha_n = O(n^{-\frac{-2(m+2)}{7}})\). Then
  \[\mathbb{E} \|f_n^\alpha - f\|^2 = O(n^{-\frac27}).\]

\end{prop}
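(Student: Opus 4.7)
The plan is to sharpen the analysis of Theorem~\ref{theorem:rates}\ref{theorem:rates:laplace} by exploiting that $\tilde k$ is supported in $[-1,1]$, so the polynomial growth $|\tilde g(\omega)|^{-2} = (1+\omega^2)^2$ inherent to Laplace errors never enters the variance estimate at frequencies $|\omega|>1/\lambda$. I start from
\begin{equation}
\mathbb{E}\|f_n^\alpha - f\|^2 \leq 2\,\mathbb{E}\|f_n^\alpha - f^\alpha\|^2 + 2\|f^\alpha - f\|^2
\end{equation}
and apply Plancherel together with Theorem~\ref{theorem:representing}\ref{theorem:representing:mult} to the first term. Writing $\tilde h_n(\omega) = \tilde k(\lambda\omega)\tilde P_n(\omega)$ with $\mathbb{E}\tilde P_n = \tilde h$ and $\mathbb{E}|\tilde P_n(\omega) - \tilde h(\omega)|^2 \leq 1/n$ gives
\begin{equation}
\mathbb{E}|\tilde h_n(\omega)-\tilde h(\omega)|^2 \leq \frac{1}{n}|\tilde k(\lambda\omega)|^2 + |1-\tilde k(\lambda\omega)|^2 |\tilde h(\omega)|^2,
\end{equation}
splitting $\mathbb{E}\|f_n^\alpha-f^\alpha\|^2$ into a variance piece $(V)$ and a KDE-bias piece $(B)$.

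For $(V)$, combine the pointwise estimate $|\tilde\varphi_\alpha(\omega)| \leq 1/|\tilde g(\omega)| = 1+\omega^2$ with $|\tilde k|\leq 1$ and the support of $\tilde k$ to obtain $(V) \leq (2\pi n)^{-1}\int_{|\omega|\leq 1/\lambda}(1+\omega^2)^2\,d\omega = O(1/(n\lambda^5)) = O(n^{-2/7})$ for $\lambda_n = c_0 n^{-1/7}$. For $(B)$, note that $\tilde\varphi_\alpha(\omega)\tilde g(\omega) = |\tilde g(\omega)|^2/(|\tilde g(\omega)|^2+\alpha\omega^{2m})$ is bounded by $1$, which yields the pointwise bound $|\tilde\varphi_\alpha|^2|\tilde h|^2 \leq |\tilde f|^2$. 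On $|\omega|>1/\lambda$ the factor $|1-\tilde k(\lambda\omega)|$ equals $1$, and $\int_{|\omega|>1/\lambda}|\tilde f|^2\,d\omega \leq \lambda^2\int \omega^2|\tilde f(\omega)|^2\,d\omega = O(\lambda^2) = O(n^{-2/7})$. On $|\omega|\leq 1/\lambda$, evenness of $k$ and $\int x^2 k\,dx<\infty$ make $\tilde k\in C^2$ with $\tilde k(0)=1,\ \tilde k'(0)=0$, so $|1-\tilde k(\lambda\omega)|^2 \leq C\lambda^4\omega^4 \leq C\lambda^2\omega^2$ on the band, and this contribution is at most $C\lambda^2\int\omega^2|\tilde f|^2\,d\omega = O(\lambda^2) = O(n^{-2/7})$.

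Finally, Theorem~\ref{theorem:rates}\ref{theorem:rates:laplace} with $k=1$ bounds the systematic term by $\|f^{\alpha_n}-f\|^2 \leq C\alpha_n^{1/(m+2)} = O(n^{-2/7})$ under $\alpha_n = O(n^{-2(m+2)/7})$. Summing the three $O(n^{-2/7})$ pieces proves the claim. The main obstacle is the variance estimate: without the band-limit on $\tilde k$, one is forced to use the global bound $|\tilde\varphi_\alpha|^2 \leq C/\alpha$ from Theorem~\ref{theorem:representing}\ref{theorem:representing:bound}, which produces the slower $O(n^{-4/15})$ rate of Corollary~\ref{cor:ratesinpractice}. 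It is exactly the compact support of $\tilde k$ that confines the polynomial blowup of $|\tilde g|^{-2}$ to a band of length $2/\lambda$, so that the trade-off $1/(n\lambda^5) \asymp \lambda^2$ is balanced precisely by the bandwidth $\lambda_n \asymp n^{-1/7}$.
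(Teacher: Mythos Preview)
Your argument is correct. The decomposition into the stochastic piece $(V)$, the KDE--bias piece $(B)$, and the regularization bias $\|f^\alpha-f\|^2$ is exactly the right one, and each of your three bounds is valid: the inequality $|\tilde\varphi_\alpha|\le 1/|\tilde g|=1+\omega^2$ combined with $\operatorname{supp}\tilde k\subset[-1,1]$ gives $(V)\le C/(n\lambda^5)=O(n^{-2/7})$; the inequality $|\tilde\varphi_\alpha\tilde h|\le|\tilde f|$ together with the second-order Taylor bound $|1-\tilde k(u)|\le Cu^2$ (from $\tilde k(0)=1$, $\tilde k'(0)=0$, $\sup|\tilde k''|\le\int x^2k<\infty$) gives $(B)=O(\lambda^2)=O(n^{-2/7})$; and Lemma~\ref{lem:systematic_bounds}\ref{lem:systematic_bounds:laplace} (or the second term in Theorem~\ref{theorem:rates}\ref{theorem:rates:laplace}) handles the systematic part under $\alpha_n=O(n^{-2(m+2)/7})$.

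The paper defers its proof to the supplement, so a line-by-line comparison is not possible from the main text; however, your Fourier-side variance--bias split is the natural route suggested by the surrounding discussion (the proposition is framed as recovering the \cite{zhang_fourier_1990} rate by choosing $h_n$ with band-limited kernel, and the paper explicitly flags that the sharper bound on $|\tilde\varphi_\alpha|$ for Laplace is what improves on the generic $C\alpha^{-1/2}$ of Theorem~\ref{theorem:representing}\ref{theorem:representing:bound}). One minor remark: you do not use the hypothesis $\int|xk'(x)|\,dx<\infty$, and your Taylor argument does not need it; this assumption is inherited from \cite{zhang_fourier_1990} and is not essential to the rate calculation you give.
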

\if1\showproofs
{
  Proof deferred to Supplemental Proof~\ref{defproof:prop:particularlaplace}.
} \fi

\if0\showproofs
{
  \elide
} \fi

The following examples show that there is a kind of critical variance
or width imposed by the conditions of
Theorem~\ref{theorem:smoothrates}, at least for a subclass of
densities: if \(E \sim N(0,\sigma^2)\), then a normal target density
\(f\) with variance \(2\sigma^2 + \varepsilon\) satisfies the
conditions of Theorem~\ref{theorem:smoothrates}, but a normal target
density with variance \(2\sigma^2 - \varepsilon\) does not. In
contrast, notice that if \(f(\cdot)\) satisfies the conditions of
Theorem~\ref{theorem:rates}, then a re-scaling \(f_\sigma(\cdot) =
\frac1\sigma f(\frac{\cdot}\sigma)\) satisfies them as well (possibly
with a different constant for the rate).
\begin{ex}\label{ex:both}
Suppose \(E \sim \N(0,\sigma^2)\), \(X \sim \N(0,2\sigma^2 +
\varepsilon)\), with \(\varepsilon > 0\). Then the pdf \(f\) of \(X\)
satisfies the conditions of both Theorem~\ref{theorem:rates} and
Theorem~\ref{theorem:smoothrates}. For the former, it suffices to note
that \(f \in H^{k}(\mathbb{R})\) for any \(k \geq 0\). For the latter,
letting \(\nu(x) = \frac1{\sqrt{2\pi\varepsilon}}
e^{-x^2/2\varepsilon}\), we can take \(\psi(x) = (-1)^m\nu^{(2m)}(x)\).
\end{ex}

\begin{ex}\label{ex:onlyone} Now take \(E \sim \N(0,\sigma^2)\), but
\(X \sim \N(0,2\sigma^2 - \varepsilon)\), with \(0 < \varepsilon <
2\sigma^2\). Then the pdf \(f\) of \(X\) satisfies the conditions of
Theorem~\ref{theorem:rates}, but not
Theorem~\ref{theorem:smoothrates}. The former holds for the same
reason as before. To see why the conditions for
Theorem~\ref{theorem:smoothrates} cannot hold, suppose that there was
a \(\psi\) s.t. \(L^*Lf = T^*T\psi\). Then we would have \((-1)^m
f^{(2m)} = g\star g*\psi\), and taking Fourier transforms yields
\((-1)^m(i \omega)^{2m} e^{-(2\sigma^2 - \varepsilon)\omega^2/2} =
e^{-\sigma^2 \omega^2} \tilde \psi(\omega)\), so that \(\tilde
\psi(\omega) = (-1)^m(i \omega)^{2m} e^{\varepsilon \omega^2/2}\). But
then \(|\tilde \psi(\omega)|^2 \to \infty\) as \(\omega \to \infty\),
so \(\psi \not\in L_2(\mathbb{R})\).
\end{ex}

\subsection{Constrained Solution}
\label{sec:constrained}

We may wish to incorporate \emph{a priori} knowledge about \(f\) into
our estimate. Suppose we know that \(f \in \mathcal{B}\), a closed,
convex set. One easy-to-manage approach is to first solve the
unconstrained problem and find an estimate \(f_n^\alpha\) not
necessarily belonging to \(\mathcal{B}\), and then somehow project
this unconstrained estimate onto \(\mathcal{B}\). Define the
projection operator \(P_\mathcal{B}\) onto a closed, convex set \(\mathcal{B} \subset
L_2(\mathbb{R})\) by
\begin{equation}\label{eqn:projection}
P_\mathcal{B}u = \argmin_{v \in \mathcal{B}}\;\|u - v\|.
\end{equation}
In words, \(P_\mathcal{B}\) maps \(u\) to the \(L_2(\mathbb{R})\)-nearest
element of \(\mathcal{B}\). The projection operator onto a closed convex set is
non-expansive (\cite{engl_regularization_1996}, Section 5.4), meaning
that for all \(u,v \in L_2(\mathbb{R})\), \(\|P_{\mathcal{B}}u -
P_{\mathcal{B}}v\| \leq \|u - v\|\). An immediate consequence is that
if \(f \in \mathcal{B}\), then projecting \(f_n^\alpha\) to
\(\mathcal{B}\) has error at least as small as
\(f_n^\alpha\). Remembering that \(P_\mathcal{B}f = f\), we have
\begin{equation}
  \|P_\mathcal{B}f_n^\alpha - f\| = \|P_\mathcal{B}f_n^\alpha - P_\mathcal{B}f\| \leq \|f_n^\alpha - f\|
\end{equation}

Now, we know \emph{a priori} that \(f\) is a probability density
function, so we ought to ensure that our estimate is a probability
density function as well. Consider the set \(\mathcal{C} = \{v \in
L_2(\mathbb{R}): \int v(t)\,dt = 1, \, v(t) \geq 0\, \forall t \in
\mathbb{R}\}\); this is the set of square-integrable probability
density functions, and now we can express this requirement as
\(f_n^\alpha \in \mathcal{C}\).

Unfortunately, while \(\mathcal{C}\) is convex, it is not closed. To
see this, note that the zero function is a limit point of
\(\mathcal{C}\): let \(\psi_n = \frac1n \mathds{1}_{[0,n]}\), and note
that \(\|\psi_n - 0\| = \|\psi_n\| = n^{-\frac12} \to 0\) as \(n \to
\infty\). Indeed, any non-negative function \(v\) with \(\int v < 1\)
is a limit point of \(\mathcal{C}\). Thus the minimum in
Equation~\eqref{eqn:projection} may not be attained, and the
projection operator \(P_\mathcal{C}\) is not well-defined. Instead,
we can work with approximations to \(\mathcal{C}\). Let
\(
\mathcal{C}_a = \{v \in \mathcal{C},\,v(t) = 0\,\,\forall t \not\in[-a,a]\}
\)
be the subset of \(\mathcal{C}\) of functions with support contained in
\([-a,a]\). 
\begin{lemma}\label{lem:Caclosed}
For fixed \(a\), the set \(\mathcal{C}_a\) is closed and convex.
\end{lemma}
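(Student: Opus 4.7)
The plan is to verify the two conditions separately. Convexity is immediate: given $v,w \in \mathcal{C}_a$ and $\lambda \in [0,1]$, the function $\lambda v + (1-\lambda)w$ is a non-negative pointwise combination of non-negative functions, integrates to $\lambda \cdot 1 + (1-\lambda) \cdot 1 = 1$, and vanishes outside $[-a,a]$ because both $v$ and $w$ do. I would dispatch this in one sentence.

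For closedness, the plan is to take an arbitrary sequence $v_n \in \mathcal{C}_a$ with $v_n \to v$ in $L_2(\mathbb{R})$ and check the three defining properties of $\mathcal{C}_a$ for the limit $v$. Support in $[-a,a]$ is the cleanest: since $v_n \mathds{1}_{[-a,a]^c} = 0$, we have $\|v \mathds{1}_{[-a,a]^c}\| = \|(v - v_n) \mathds{1}_{[-a,a]^c}\| \leq \|v - v_n\| \to 0$, so $v = 0$ a.e. on $[-a,a]^c$. Non-negativity I would get by passing to a subsequence $v_{n_k} \to v$ a.e., which exists for any $L_2$-convergent sequence; since each $v_{n_k} \geq 0$ a.e., the pointwise limit satisfies $v \geq 0$ a.e. Finally, the unit integral follows because $v_n$ and $v$ are both supported in the bounded interval $[-a,a]$, so by Cauchy–Schwarz,
\begin{equation}
\left| \int v_n - \int v \right| = \left| \int_{-a}^a (v_n - v) \right| \leq \sqrt{2a}\, \|v_n - v\| \to 0,
\end{equation}
and the left-hand integrals all equal $1$.

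I do not anticipate any real obstacle; the only subtle point is that $L_2$ convergence does not by itself give a.e. pointwise convergence, which is why I would invoke the standard fact that one can extract an a.e.-convergent subsequence. The containment in $[-a,a]$ is essential: it is precisely what bounds the domain so that $L_2$ convergence upgrades to $L_1$ convergence on the support via Cauchy–Schwarz, rescuing the integrate-to-one condition that fails for $\mathcal{C}$ itself (as illustrated by the $\psi_n = n^{-1}\mathds{1}_{[0,n]}$ example preceding the lemma).
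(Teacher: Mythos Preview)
Your proposal is correct and is the standard direct verification; the paper defers its proof to the supplement, but given the elementary nature of the lemma your argument is essentially the only natural one and almost certainly coincides with it. The key step you correctly identify---that compact support allows Cauchy--Schwarz to upgrade $L_2$ convergence to $L_1$ convergence and hence preserve the unit integral---is exactly what distinguishes $\mathcal{C}_a$ from the non-closed $\mathcal{C}$.
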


\if1\showproofs
{
  Proof deferred to Supplemental Proof~\ref{defproof:lem:Caclosed}.
} \fi

\if0\showproofs
{
  \elide
} \fi

Let the unconstrained estimator \(f_n^\alpha\) projected to
\(\mathcal{C}_a\) be denoted \(\mathring f_n^\alpha =
P_{\mathcal{C}_a} f_n^\alpha\). The non-expansiveness of the
projection suggests that \(\mathring f_n^\alpha\) may inherit the
asymptotics of \(f_n^\alpha\). If \(f \in C_a\) for some \(a\) and \(a
\to \infty\), then this is immediate from the earlier argument.  If
\(f \not\in \mathcal{C}_a\) for all \(a\), we need to do a little more
work, and for that we will need to know the size of
\(\|P_{\mathcal{C}_a}f - f\|\) in terms of \(a\).

\begin{lemma}\label{lem:projf}
Assume~\ref{assumption:fisl2}, \(\mathbb{E}[|X|^\beta] < \infty\) and that \(f(t) = o(1)\) as
\(|t| \to \infty\).  Then for large enough \(a\),
\(\|P_{\mathcal{C}_a}f - f\| \leq
2\mathbb{E}[|X|^\beta]a^{-\beta}\). If also \(\mathbb{E}[e^{\beta
|X|}] < \infty\), then \(\|P_{\mathcal{C}_a}f - f\| \leq
2\mathbb{E}[e^{\beta |X|}]e^{-\beta a}\).
\end{lemma}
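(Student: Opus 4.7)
The plan rests on the variational definition of the projection: since $P_{\mathcal{C}_a}f$ minimizes $\|v - f\|$ over $v \in \mathcal{C}_a$, exhibiting any single feasible $\tilde f_a \in \mathcal{C}_a$ already gives $\|P_{\mathcal{C}_a}f - f\| \le \|\tilde f_a - f\|$. So the work reduces to producing a good candidate and controlling the tail mass $\epsilon_a := P(|X|>a) = \int_{|t|>a} f(t)\,dt$. The two moment hypotheses enter only through Markov's and Chernoff's inequalities:
\[
\epsilon_a \,\le\, \mathbb{E}[|X|^\beta]\,a^{-\beta} \quad\text{and}\quad \epsilon_a \,\le\, \mathbb{E}[e^{\beta|X|}]\,e^{-\beta a},
\]
so the whole lemma boils down to the deterministic inequality $\|\tilde f_a - f\| \le 2\epsilon_a$ followed by substitution.

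For the candidate I would take the \textbf{truncate-and-compensate} density
\[
\tilde f_a(t) \;=\; \Bigl(f(t) + \tfrac{\epsilon_a}{2a}\Bigr)\mathds{1}_{[-a,a]}(t),
\]
which is non-negative (since $f \ge 0$ and the added constant is non-negative), compactly supported in $[-a,a]$, and integrates to $(1-\epsilon_a) + 2a\cdot\tfrac{\epsilon_a}{2a} = 1$, so $\tilde f_a \in \mathcal{C}_a$. The pointwise difference $\tilde f_a - f$ is the constant $\epsilon_a/(2a)$ on $[-a,a]$ and $-f(t)$ on $\{|t|>a\}$, and a direct integration yields $\int |\tilde f_a - f|\,dt = 2\epsilon_a$ — exactly the target after feeding in the tail bounds.

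The hypothesis $f(t) = o(1)$ enters precisely when passing from the $L_1$-type quantity $\epsilon_a$ to the integral of $f^2$ needed for the $L_2$ norm used throughout the paper: for $a$ large enough that $f \le 1$ on $\{|t|>a\}$, one has $\int_{|t|>a} f^2 \le \int_{|t|>a} f = \epsilon_a$, while the compensating-constant contribution $2a \cdot (\epsilon_a/(2a))^2 = \epsilon_a^2/(2a)$ is of strictly smaller order. Substituting the Markov estimate then gives $\|\tilde f_a - f\| \le 2\mathbb{E}[|X|^\beta]a^{-\beta}$, and substituting the Chernoff estimate gives the exponential version; the projection inequality closes the argument.

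The main obstacle is simply bookkeeping the constants so they line up with the stated factor of $2$ — in particular verifying that redistributing the mass $\epsilon_a$ uniformly over an interval of length $2a$ produces exactly the deficit $1 - \int_{[-a,a]} f = \epsilon_a$ without any stray $a$-dependent prefactors, and that the cross-term $\epsilon_a^2/(2a)$ is negligible compared with $\epsilon_a$ at the claimed rate. Once this is done, the exponential case is structurally identical to the polynomial one, with $e^{-\beta a}$ replacing $a^{-\beta}$ in every appearance of the tail bound.
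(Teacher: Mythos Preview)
Your overall strategy --- exhibit a feasible candidate in $\mathcal{C}_a$, invoke the projection inequality, and control the tail mass $\epsilon_a$ via Markov/Chernoff --- is sound, and your truncate-and-compensate candidate $\tilde f_a$ is a natural choice. The $L_2$ computation is also correct through the line
\[
\|\tilde f_a - f\|^2 \;=\; \int_{|t|>a} f(t)^2\,dt \;+\; \frac{\epsilon_a^2}{2a} \;\le\; \epsilon_a + \frac{\epsilon_a^2}{2a}.
\]

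The gap is in the very last step. From $\|\tilde f_a - f\|^2 \le \epsilon_a(1+o(1))$ you assert $\|\tilde f_a - f\| \le 2\mathbb{E}[|X|^\beta]a^{-\beta}$, but taking the square root only yields
\[
\|\tilde f_a - f\| \;\le\; \sqrt{2\epsilon_a} \;\le\; \sqrt{2\,\mathbb{E}[|X|^\beta]}\; a^{-\beta/2},
\]
which is the \emph{square root} of the target rate. For large $a$ one has $a^{-\beta/2} \gg a^{-\beta}$, so the stated inequality does not follow. Your $L_1$ identity $\int|\tilde f_a - f| = 2\epsilon_a$ is correct but does not transfer to the $L_2$ norm used in the lemma. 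The crude bound $\int_{|t|>a} f^2 \le \int_{|t|>a} f = \epsilon_a$, obtained from $f\le 1$ on the tail, loses a full factor of $\epsilon_a$: to reach $\|P_{\mathcal{C}_a}f - f\| = O(a^{-\beta})$ you would need $\int_{|t|>a} f^2 = O(a^{-2\beta})$, and that is not a consequence of $f(t)=o(1)$ together with Markov's inequality alone. As written, your argument delivers only $\|P_{\mathcal{C}_a}f - f\| \le C a^{-\beta/2}$ (respectively $Ce^{-\beta a/2}$), which still suffices for the qualitative purpose in the next lemma but does not match the exponent claimed here.
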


\if1\showproofs
{
  Proof deferred to Supplemental Proof~\ref{defproof:lem:projf}.
} \fi

\if0\showproofs
{
  \elide
} \fi

With this in hand, we can say that our constrained estimator will be
as good (in an asymptotic sense) as the unconstrained estimator, as
long as we let \(a\) grow fast enough that the first term dominates:
\begin{lemma}\label{lem:asgood}
Assume \(\mathbb{E}[|X|^\beta] < \infty\) and that \(f(t) = o(1)\) as \(|t| \to \infty\). Then
\begin{equation}
\|\mathring f_n^\alpha - f\| \leq \|f_n^\alpha - f\| + Ca^{-\beta}.
\end{equation}
If \(\mathbb{E}[e^{\beta |X|}] < \infty\), then
\(\|\mathring f_n^\alpha - f\| \leq \|f_n^\alpha - f\| + Ce^{-\beta a}.\)
\end{lemma}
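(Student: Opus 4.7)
The proof is a short three-line argument that combines the non-expansiveness of projection onto a closed convex set with the approximation bound of Lemma~\ref{lem:projf}. The plan is to apply the triangle inequality to decompose the error as
\[
\|\mathring f_n^\alpha - f\| = \|P_{\mathcal{C}_a} f_n^\alpha - f\| \leq \|P_{\mathcal{C}_a} f_n^\alpha - P_{\mathcal{C}_a} f\| + \|P_{\mathcal{C}_a} f - f\|.
\]
Note that although \(f\) itself need not lie in \(\mathcal{C}_a\), the projection \(P_{\mathcal{C}_a}f\) is a well-defined element of \(L_2(\mathbb{R})\) since by Lemma~\ref{lem:Caclosed} the set \(\mathcal{C}_a\) is closed and convex, so this decomposition is legitimate.

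Next, I would bound the two terms separately. The first term is controlled by the non-expansiveness property of the projection onto a closed convex subset of a Hilbert space (cited from \cite{engl_regularization_1996}, Section 5.4, and stated in the paragraph preceding Equation~\eqref{eqn:projection}): for all \(u, v \in L_2(\mathbb{R})\), \(\|P_{\mathcal{C}_a} u - P_{\mathcal{C}_a} v\| \leq \|u - v\|\). Applied here with \(u = f_n^\alpha\) and \(v = f\), this gives
\[
\|P_{\mathcal{C}_a} f_n^\alpha - P_{\mathcal{C}_a} f\| \leq \|f_n^\alpha - f\|.
\]
The second term is precisely what Lemma~\ref{lem:projf} bounds under the respective moment assumptions, yielding \(\|P_{\mathcal{C}_a} f - f\| \leq 2\mathbb{E}[|X|^\beta] a^{-\beta}\) in the polynomial case, and \(\|P_{\mathcal{C}_a} f - f\| \leq 2\mathbb{E}[e^{\beta|X|}] e^{-\beta a}\) under the stronger exponential moment assumption.

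Combining the two bounds gives both statements of the lemma, with \(C\) absorbing the factor of \(2\) and the moment constant. There is no real obstacle here: all the work has already been done in Lemmas~\ref{lem:Caclosed} and~\ref{lem:projf}, and the proof is a clean application of the triangle inequality plus non-expansiveness. The only point requiring attention is the observation that non-expansiveness does not require either argument to lie in \(\mathcal{C}_a\), which is what permits us to invoke it for the pair \((f_n^\alpha, f)\).
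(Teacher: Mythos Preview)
Your proposal is correct and follows exactly the same approach as the paper: add and subtract \(P_{\mathcal{C}_a}f\), apply the triangle inequality, bound the first term by non-expansiveness of the projection, and bound the second term by Lemma~\ref{lem:projf}. The paper's proof is essentially a one-line version of what you wrote.
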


\if1\showproofs
{
  \begin{proof}[Proof of Lemma \ref{lem:asgood}]
Add and subtract \(P_{\mathcal{C}_a}f\):
\begin{equation}\begin{aligned}
\|\mathring f_n^\alpha - f\| &= \|P_{\mathcal{C}_a}f_n^\alpha - P_{\mathcal{C}_a}f + P_{\mathcal{C}_a}f - f\|\\
&\leq \|P_{\mathcal{C}_a}f_n^\alpha - P_{\mathcal{C}_a}f\| + \|P_{\mathcal{C}_a}f - f\|
\leq \|f_n^\alpha - f\| + 2\mathbb{E}[|X|^\beta]a^{-\beta}.
\end{aligned}\end{equation}
A similar approach yields the exponential version.
\end{proof}


} \fi

\if0\showproofs
{
  \elide
} \fi

\section{The Estimator in Practice}
\label{sec:practice}

In this section we deal with using the estimator in practice, and
compare its performance to the deconvoluting kernel density estimator
in finite samples. 
\subsection{Computing the Estimate}
\label{sec:computing}


The forms of \(f_n^\alpha\) in
Theorem~\ref{theorem:representing}\ref{theorem:representing:mult}-\ref{theorem:representing:kernel} are useful, but not the
most practical for work on the computer; we need a convenient way to
project our estimate to the set of pdfs as described in
Section~\ref{sec:constrained}, and to impose other shape constraints
as desired. Instead, we compute the estimate in
Equation~\eqref{eqn:estimator} out of an approximation space \(X_n
\subset H^m(\mathbb{R})\) of splines of degree \(r > m\). We will find
that this turns out to be a quadratic program, so that linear
constraints are easily imposed.

Before discussing the details of the computations, we present a
Theorem showing that this is a legitimate approximation to make. If we
denote the spline approximation by \(s_n^\alpha\),
Theorem~\ref{theorem:splineasgood} says that if the parameters of the
spline space are selected appropriately, then \(s_n^\alpha -
f_n^\alpha\) is of a smaller order than the rate of convergence we
found in Theorem~\ref{theorem:rates}. This means that asymptotically,
\(s_n^\alpha\) and \(f_n^\alpha\) are the same estimator. As a
consequence, the spline approximation \(s_n^\alpha\) attains the same
rate of convergence as the exact estimator \(f_n^\alpha\).

\begin{theorem}\label{theorem:splineasgood}
  Suppose \(X_n\) is the space of \(r\)th-order splines, \(r>m\), with uniform knot spacing \(\gamma\) on \([a,b]\) and uniform knot spacing \(\gamma^*\) on \([a - \gamma^*m,a]\) and \([b,b+\gamma^*m]\), with the condition that for all \(s \in X_n\), and for \(0 \leq k \leq m - 1\), we have \(s^{(k)}(a - \gamma^*m) = s^{(k)}(b+\gamma^*m) = 0\). Take our spline estimate to be \(s_n^\alpha = \argmin_{s \in X_n}\;\|g*s - h_n\|^2 + \alpha \|s^{(2)}\|^2\). Suppose also that \(\mu_{\hat Y} = \mathbb{E}\int_{-\infty}^\infty |x|h_n(x)\,dx = O(1)\). Adopt the assumptions of either Theorem~\ref{theorem:smoothrates} or Theorem~\ref{theorem:rates}, and let \(r_n = \mathbb{E}\|f_n^\alpha - f\|^2\) denote the resulting rate of convergence of the exact estimator. Choose \(\gamma,\gamma^*,a\), and \(b\) so that \(\alpha^{-4}\gamma^{2(r-m)} = o(r_n)\), \(\alpha^{-4} \gamma^*(1+\gamma) = o(r_n)\), and \(\alpha^{-4}(|a|\wedge|b|)^{-1} = o(r_n)\). Then
  \(
    \mathbb{E}\|s_n^\alpha - f_n^\alpha\|^2 = o(r_n).
  \)
  It follows also that
  \(
    \mathbb{E}\|s_n^\alpha - f\|^2 = O(r_n).
  \)
\end{theorem}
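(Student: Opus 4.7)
The strategy is Galerkin: view $s_n^\alpha$ as the minimizer of the Tikhonov functional $G(\cdot;h_n,\alpha)$ over the subspace $X_n\subset H^m(\mathbb{R})$, and compare it to the unrestricted minimizer $f_n^\alpha$ through the quadratic structure of the functional. Let $A_\alpha(u,v)=\langle g*u,g*v\rangle + \alpha\langle u^{(m)},v^{(m)}\rangle$, so the Euler--Lagrange characterizations give $A_\alpha(f_n^\alpha,v)=\langle h_n,g*v\rangle$ for all $v\in H^m(\mathbb{R})$ and $A_\alpha(s_n^\alpha,s)=\langle h_n,g*s\rangle$ for all $s\in X_n$. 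Subtracting and restricting to $s\in X_n$ yields the Galerkin orthogonality $A_\alpha(e,s)=0$ for $e=f_n^\alpha - s_n^\alpha$, and a Cauchy--Schwarz argument on this non-negative form then gives the quasi-best approximation bound $A_\alpha(e,e)\leq A_\alpha(f_n^\alpha-\hat s,\, f_n^\alpha-\hat s)$ for every $\hat s\in X_n$.

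To pass from the energy norm to $\|e\|^2$ I would split the Fourier integral: on $|\omega|\leq\omega_0$, continuity of $\tilde g$ and $\tilde g(0)=1$ give $|\tilde g(\omega)|\geq c>0$, so that $\|Te\|^2$ controls $\int_{|\omega|\leq\omega_0}|\tilde e|^2$; on $|\omega|>\omega_0$, $\alpha\omega_0^{2m}\cdot\int_{|\omega|>\omega_0}|\tilde e|^2\leq \alpha\|Le\|^2$. Balancing yields $\|e\|^2\leq C\alpha^{-1}A_\alpha(e,e)$ for small $\alpha$. Combined with the quasi-optimality, the task reduces to constructing a concrete $\hat s\in X_n$ and bounding $\|T(f_n^\alpha-\hat s)\|^2+\alpha\|(f_n^\alpha-\hat s)^{(m)}\|^2$.

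I would take $\hat s$ to be a quasi-interpolant of a truncated, tapered version of $f_n^\alpha$, splitting the resulting error into three pieces. On the interior $[a,b]$, standard spline approximation theory gives $\inf_s\|(f_n^\alpha-s)^{(m)}\|^2\lesssim\gamma^{2(r-m)}\|D^r f_n^\alpha\|^2$ and similarly with $\gamma^{2r}$ for the $L_2$ error; the a priori bound $\|D^rf_n^\alpha\|\leq C\alpha^{-1}$ from Theorem~\ref{theorem:representing}\ref{theorem:representing:apriori} (valid for $r\leq 2m$) converts these into bounds of the form $C\alpha^{-2}\gamma^{2(r-m)}$. On the boundary buffers $[a-\gamma^*m,a]\cup[b,b+\gamma^*m]$, the condition $s^{(k)}(\cdot)=0$ at the extended endpoints forces $\hat s$ to interpolate smoothly between $f_n^\alpha$ and zero; the error there is controlled using the sup-norm a priori bound from Theorem~\ref{theorem:representing}\ref{theorem:representing:apriorisup} times the width $\gamma^*(1+\gamma)$. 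Finally, the tail $|x|>|a|\wedge|b|$ is bounded by $\|f_n^\alpha\|_\infty\cdot(|a|\wedge|b|)^{-1}\int|x||f_n^\alpha(x)|\,dx$; the first moment is bounded via $f_n^\alpha=\varphi_\alpha*h_n$, the hypothesis $\mu_{\hat Y}=O(1)$, and a first-moment estimate for $\varphi_\alpha$ derived from its Fourier representation.

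Combining the coercivity-gap factor $\alpha^{-1}$ with the three error contributions and taking expectations assembles a bound of the form $C[\alpha^{-4}\gamma^{2(r-m)}+\alpha^{-4}\gamma^*(1+\gamma)+\alpha^{-4}(|a|\wedge|b|)^{-1}]$, which the stated hypotheses force to be $o(r_n)$; the conclusion $\mathbb{E}\|s_n^\alpha-f\|^2=O(r_n)$ then follows from the triangle inequality and $r_n=\mathbb{E}\|f_n^\alpha-f\|^2$. The main difficulty is the bookkeeping: each of the three error contributions picks up both an $\alpha^{-1}$ factor from the a priori bounds on derivatives of $f_n^\alpha$ and a further $\alpha^{-1}$ factor from the $A_\alpha$-to-$L_2$ conversion, and these powers must be tracked carefully against the spline approximation rates $\gamma^{2(r-m)}$ and $\gamma^*$. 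The tail truncation is particularly delicate because $X_n$ is supported on a compact interval while $f_n^\alpha$ lives on all of $\mathbb{R}$, and it is here that the hypothesis $\mu_{\hat Y}=O(1)$ does the essential work.
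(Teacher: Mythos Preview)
The paper defers this proof to a supplement, but your Galerkin/C\'ea-lemma strategy---quasi-optimality in the energy form $A_\alpha$, an $\alpha^{-1}$ coercivity loss to pass to $L_2$, and a three-piece bound on $\inf_{\hat s\in X_n}A_\alpha(f_n^\alpha-\hat s,f_n^\alpha-\hat s)$ via interior spline approximation, boundary taper, and tail truncation---matches the three hypotheses on $\gamma$, $\gamma^*$, and $|a|\wedge|b|$ term for term and is almost certainly the same route.

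One bookkeeping caveat: your sketch undercounts the $\alpha$ powers. The a~priori bounds $\|D^kf_n^\alpha\|\leq C\alpha^{-1}$ from Theorem~\ref{theorem:representing}\ref{theorem:representing:apriori} get \emph{squared} in the $A_\alpha$ estimate, contributing $\alpha^{-2}$; the boundary and tail pieces each draw an additional $\alpha^{-1}$ from the sup-norm bound of Theorem~\ref{theorem:representing}\ref{theorem:representing:apriorisup} (once for $\|f_n^\alpha\|_\infty$ in the Markov-type tail inequality, once for the values being matched to zero in the taper), and together with the $\alpha^{-1}$ coercivity loss this is how $\alpha^{-4}$ rather than $\alpha^{-2}$ appears in the hypotheses. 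Relatedly, the first-moment control of $\varphi_\alpha$ you invoke for the tail is not automatic from its Fourier representation and may itself carry an $\alpha$-dependence; in practice one avoids it by bounding $\int_{|x|>R}|f_n^\alpha|$ directly through $\|\varphi_\alpha\|_1$ and $\mu_{\hat Y}$. These are details of the accounting rather than gaps in the argument.
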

Proof deferred to Supplemental Proof~\ref{defproof:theorem:splineasgood}

Now we describe how we compute \(s_n^\alpha\) in concrete terms. In
all of the following, unless otherwise stated, we fix \(m = 2\).  Fix
\(r = 3\), and let \(\mathscr{S}_q =
\mathscr{S}(r=3,\xi_1,\dots,\xi_{q+4})\) denote the space of cubic
splines (cf. \cite[Chapter 3]{powell_approximation_1981}) with knots
\(\xi_1, \dots, \xi_{q+4}\), with \(\xi_1 < Y_{(1)}\) and \(\xi_{q+4}
> Y_{(n)}\), evenly spaced knots, no knots of multiplicity larger than
one, and end conditions \(s^{(k)}(\xi_1) = s^{(k)}(\xi_{q+4}) = 0\)
for \(k = 0, 1, 2\). The end conditions specify that members of
\(\mathscr{S}_q\) vanish outside the interval \([\xi_1,\xi_{q+4}]\)
and are twice continuously-differentiable at the boundary. This space
\(\mathscr{S}_q\) has as a basis the collection of \(q\) unit-integral
B-splines \(\{b_i\}_{i=1}^q\), so that if \(s \in \mathscr{S}_q\),
then \(s(x) = \sum_{i=1}^q \theta_i b_i(x)\).  Note that
\(\mathscr{S}_q \subset H^m(\mathbb{R})\).

We now take as our estimate
\(s_n^\alpha = \argmin_{s \in \mathscr{S}_q}\;\|g*s - h_n\|^2 + \alpha \|s^{(2)}\|^2.\)
If \(s(x) = \sum_{i=1}^q \theta_i b_i(x)\), then
\(\|g*s - h_n\|^2 + \alpha\|s^{(2)}\|^2 = \bm{\theta}^T \mathbf{M}
\bm{\theta} - 2\bm{\theta}^T\mathbf{d} + \|h_n\|^2 + \alpha
\bm{\theta}^T \mathbf{P} \bm{\theta},\)
where \(\bm{\theta}\) is the vector of coefficients \(\theta_i\), and
\(\mathbf{M}\), \(\mathbf{d}\), and \(\mathbf{P}\) are a \(q\times q\)
matrix, \(q \times 1\) vector, and \(q \times q\) matrix respectively,
with typical entries \(M_{ij} = \int (g*b_i)(g*b_j)\), \(d_i = \int
(g*b_i) h_n\), and \(P_{ij} = \int b_i^{(2)}b_j^{(2)}\).

With this matrix representation, we can see, using standard
techniques, and noting that \(\|h_n\|^2\) does not depend on
\(s_n^\alpha\), that the coefficients of \(s_n^\alpha\) are
\(\bm{\theta}_n^\alpha = (\mathbf{M} + \alpha
\mathbf{P})^{-1}\mathbf{d}\), so that \(s_n^\alpha(x) = \sum_{i=1}^n
\theta_{n,i}^\alpha b_i(x)\).  Analogous to the exact solution,
\(s_n^\alpha\) need not be a pdf. To produce a pdf, we now solve
\begin{equation}\label{eqn:splineproject}
  \mathring s_n^\alpha = \argmin_{\substack{\int s = 1\\s \geq 0}}\;\|s - s_n^\alpha\|^2,
\end{equation}
At this stage, other linear constraints my be introduced by expressing
them against the B-spline basis. If \(\mathbf{G}\) is a matrix with
typical entry \(G_{ij} = \int b_ib_j\), and, letting \(\xi_1 = x_1 <
x_2, \dots, x_{n_x} = \xi_{q+4}\) be a grid of evenly spaced values on
the support of \(\mathscr{S}_q\), with \(\mathbf{B}_x\) the \(n_x
\times q\) matrix with \(i,j\)th entry \(b_j(x_i)\), the coefficients
of the solution to Equation~\eqref{eqn:splineproject} are given
(approximately) by the linearly-constrained quadratic program
\begin{equation}
  \label{eqn:vectorproject}
  \mathring{\bm{\theta}}_n^\alpha = \argmin_{\substack{\mathbf{1}^T\bm{\theta} =
      1\\\mathbf{B}_x\bm{\theta} \geq 0}}\;(\bm{\theta} - \bm{\theta}_n^\alpha)^T\mathbf{G}(\bm{\theta} - \bm{\theta}_n^\alpha).
\end{equation}
The reason this is approximate is that the the convex constraint
\(s(x) \geq 0\) for all \(x\) is approximated by the collection of
linear constraints \(s(x_i) = \sum_{j=1}^q \theta_jb_j(x_i) \geq 0\),
\(i = 1, \dots, n_x\). Equation~\eqref{eqn:vectorproject} is a
quadratic program with \(q\)-dimensional objective and \(n_x+1\)
linear constraints.

The entries of \(\mathbf{G}\) and \(\mathbf{P}\) can be computed by
hand from the piecewise-polynomial representation of the B-splines.
Computing the entries of \(\mathbf{M}\) and \(\mathbf{d}\) benefits
from the Fourier representation
\begin{equation}
  M_{ij} = \frac{1}{2\pi} \int \tilde b_i(\omega)\overline{\tilde
    b_j(\omega)}|\tilde g(\omega)|^2\,d\omega\qquad\text{and}\qquad
  d_i = \frac{1}{2\pi} \int \tilde g(\omega) \tilde b_i(\omega)
  \overline{\tilde h_n(\omega)}\,d\omega,
\end{equation}
which can then be computed by an appropriate quadrature, bypassing the
problem of dealing with the convolutions. When \(h_n\) is a kernel
density estimate or a histogram, \(\tilde h_n\) is not difficult to
compute, and the \(\tilde b_i\) are straightforward to compute, as
B-spline basis functions can be represented as shifted, scaled
self-convolutions of \(\mathds{1}_{[0,1]}\).

\subsection{Finite Sample Behavior}
\label{sec:finite}

In \cite{wand_finite_1998}, the author points out that while
asymptotic rates for deconvolution are very slow no matter the size of
the measurement error (cf. Theorem~\ref{theorem:rates} here,
\cite{stefanski_rates_1990}, \cite{zhang_fourier_1990},
\cite{fan_optimal_1991}), there is another side of the coin: for very
small measurement error we ought to expect to be able to estimate
\(f\) with MISE quite close to that of the error-free setting. For
example, we could simply ignore measurement error and increase our
MISE by at most \(\|g*f - f\|^2\), which becomes arbitrarily small as
the measurement error decreases. Thus, we might expect that the
pessimistic picture given by asymptotic rates is limited to truly
large samples, especially when measurement error is small, and a
direct investigation into small-sample behavior is required for a
better understanding of deconvolution estimators.

To get a handle on the small-sample behavior, \cite{wand_finite_1998}
creates two products for the deconvoluting kernel estimator: a log-log
plot of the minimum attainable MISE, i.e.\ \(\inf_{\lambda > 0}
\mathbb{E}\|f_n^\lambda - f\|^2\), against the sample size, as well
as a table listing the smallest sample size required for the minimum
attainable MISE in deconvolution to be at least as small as the
minimum attainable MISE in the no-measurement-error case with some
fixed sample size.

We will investigate these same properties for analogous quantity, the
minimum attainable MISE for the SPeD estimator, given by
\(\inf_{\alpha>0} \mathbb{E}\|f_n^\alpha - f\|^2\). Supplemental
Figure~\ref{fig:apprxMISE} shows a plot of \(\mathbb{E}\|f_n^\alpha -
f\|^2\) as a function of \(\alpha\).  Since the MISE involves unknown
quantities, in practice \(\alpha\) will have to be chosen from the
data, and the search for a good data-driven choice of \(\alpha\) is
ongoing; in Section~\ref{sec:cyto}, we use what is essentially an
iterated bootstrap, but at this point do not claim that it is optimal.

The settings addressed in \cite{wand_finite_1998}, which we will use
here as well, are as follows. The target random variable \(X\) has one
of the following densities; (i) standard normal, (ii) normal mixture
\(\frac23 \N(0,\,\sigma=1) + \frac13 \N(0,\,\sigma=\frac15)\), (iii)
\(\Gammadist(\zeta=4,\,\beta=1)\), (iv) gamma mixture \(\frac25
\Gammadist(\zeta=5,\,\beta=1) + \frac35
\Gammadist(\zeta=13,\,\beta=1)\), with \(\zeta\) and \(\beta\) the
shape and rate parameters, respectively. We will consider normal
measurement error \(E\) with \(\Var(E) = p\cdot \Var(Y)\), with
various choices of \(p\).

To investigate these properties for the smoothness-penalized
deconvolution estimator, we must compute the MISE of our estimator,
\(\mise(f_n^\alpha) = \mathbb{E} \int (f_n^\alpha - f)^2\). From
Theorem~\ref{theorem:representing}\ref{theorem:representing:mult}, we have that the Fourier
transform of the estimate is given by \(\tilde \varphi_\alpha(\omega)
\tilde h_n(\omega)\); to simplify calculations, we approximate
\(\tilde h_n\) by the Fourier transform of the empirical distribution,
\(\tilde P_n(\omega) = \frac1n \sum_{j=1}^n e^{-i\omega Y_j}\), using
instead \(\tilde f_n^\alpha(\omega) = \tilde \varphi_\alpha(\omega)
\tilde P_n(\omega) = \sum_{j=1}^n \tilde \varphi_\alpha(\omega)
e^{-i\omega Y_j}\). Even though we have replaced the density estimate
\(h_n\) by the empirical distribution, which has no density at all,
the approximation is quite good; see Figure~\ref{fig:apprxMISE} in the
supplemental material. The resulting MISE, derived in
Fact~\ref{fact:apprxMISE}, is
\begin{equation}
  \label{eqn:apprxMISE}
  \mise(\alpha) = \frac1{2\pi} \left [ \int | \tilde \varphi_\alpha(\omega) \tilde g(\omega) - 1 |^2 | \tilde f(\omega) |^2\,dt + \frac1n \int |\tilde \varphi_\alpha(\omega)|^2(1 - |\tilde g(\omega)\tilde f(\omega)|^2) d\omega \right ]
\end{equation}
which we will evaluate numerically in the following.

\begin{figure}[h]
\input{tikz_plots/infMISEs_all}\caption{MISE all under oracle choice of tuning parameter, densities (i)-(iv), left-to-right. Solid black is MISE for kernel estimator in the error-free setting. Solid lines are SPeD, and dashed lines are DKE. Red and blue lines have \(p = 0.1,\,0.3\), where \(\Var(E) = p\cdot \Var(Y)\). }
\label{fig:infMISEs}
\end{figure}

In Figure \ref{fig:infMISEs}, we show plots of best-attainable MISE,
i.e.\ \(\inf_{\alpha > 0} \mathbb{E}\|f_n^\alpha - f\|^2\), for the
SPeD (computed via Equation~\eqref{eqn:apprxMISE}), and the same, but
with infimum over the bandwidth, for the DKE and a conventional kernel
estimator on the non-contaminated \(X\)'s for reference (both computed
via the expressions in \cite{wand_finite_1998}). For the deconvoluting
kernel density estimate, we use a base kernel \(K_{\text{DKE}}\) with
Fourier transform \(\kappa_{\text{DKE}}(\omega) =
\mathds{1}_{|\omega|<1}(1-\omega^2)^3\); this is \(\kappa_1\) in
\cite{wand_finite_1998}, and is the default choice in the
\texttt{deconvolve} R package \cite{delaigle_deconvolve_R}. For the
error-free kernel estimator, we use kernel with Fourier transform
\(\kappa_{\text{ef}}(\omega) = (1+\omega^4)^{-1}\). This relates to
the smoothness-penalized deconvolution estimator in the following
sense: the error-free setting is equivalent to the measurement error
problem where \(E\) is a point-mass at zero. In that case, \(\tilde
g(\omega) = 1\), and then \(\tilde \varphi_\alpha(\omega) =
(1+\alpha\omega^{2m})^{-1}\). If we replace \(\tilde h\) by \(\tilde
P_n\) again in
Theorem~\ref{theorem:representing}\ref{theorem:representing:bound}, we
have a kernel estimator with \(K_{\text{ef}}(x) = \frac{1}{2\pi}\int
e^{i\omega x}\kappa_{\text{ef}}(\omega)\,d\omega\). Note that \(\int
K_{\text{ef}} = 1\), but \(K_{\text{ef}}\) is not non-negative. In
fact, when \(m = 2\), \(K_{\text{ef}}\) is a fourth-order kernel.

\begin{table}[h]
  \caption{Minimum sample sizes for stated estimator, with \(p\) the
    proportion of measurement error, to achieve MISE as small as
    error-free kernel density estimation on the \(X\)'s with kernel
    \(K_{\text{ef}}\). The analogous value with respect to kernel
    \(K_{\text{DKE}}\) is in parentheses.}
  \spacingset{1}  
  \begin{tabular}{ lrrrr }
    \multicolumn{1}{c}{}&
    \multicolumn{2}{c}{Sample size \(n = 100\)}&
    \multicolumn{2}{c}{\(n = 1,000\)}\\
    \multicolumn{1}{l}{\(p\)}&
    \multicolumn{1}{c}{SPDE}&
    \multicolumn{1}{c}{DKE}&
    \multicolumn{1}{c}{SPDE}&
    \multicolumn{1}{c}{DKE}\\
    \multicolumn{5}{l}{(i) Standard normal density}\\
    10\% & 146 (102) & 243 (156) & 1,525 (1,001) & 7,386 (2,931)\\
    30\% & 303 (204) & 1,761 (788) & 4,170 (2,239) & \(> 10^6\) (\(> 10^6\))\\
    50\% & 1,221 (747) & 924,510 (103,089) & 34,945 (15,566) & \(> 10^6\) (\(> 10^6\))\\
    \multicolumn{5}{l}{(ii) Normal mixture density}\\
    10\% & 629 (556) & 1,798 (1,415) & 32,715 (22,395) & \(> 10^6\) (\(> 10^6\))\\
    30\% & 236,587 (167,254) & \(> 10^6\) (\(> 10^6\)) & \(> 10^6\) (\(> 10^6\)) & \(> 10^6\) (\(> 10^6\))\\
    50\% & \(> 10^6\) (\(> 10^6\)) & \(> 10^6\) (\(> 10^6\)) & \(> 10^6\) (\(> 10^6\)) & \(> 10^6\) (\(> 10^6\))\\
    \multicolumn{5}{l}{(iii) Gamma(4) density}\\
    10\% & 179 (140) & 266 (197) & 2,548 (1,721) & 17,342 (7,863)\\
    30\% & 695 (499) & 8,016 (3,620) & 42,254 (21,561) & \(> 10^6\) (\(> 10^6\))\\
    50\% & 9,451 (5,551) & \(> 10^6\) (\(> 10^6\)) & \(> 10^6\) (\(> 10^6\)) & \(> 10^6\) (\(> 10^6\))\\
    \multicolumn{5}{l}{(iv) Gamma mixture density}\\
    10\% & 284 (270) & 300 (282) & 7,963 (6,020) & 388,770 (151,942)\\
    30\% & 5,521 (4,992) & 53,740 (41,039) & \(> 10^6\) (\(> 10^6\)) & \(> 10^6\) (\(> 10^6\))\\
    50\% & \(> 10^6\) (\(> 10^6\)) & \(> 10^6\) (\(> 10^6\)) & \(> 10^6\) (\(> 10^6\)) & \(> 10^6\) (\(> 10^6\))\\
\end{tabular}

  \spacingset{1.9}
  \label{tab:equivn}
\end{table}

In Figure~\ref{fig:infMISEs}, the smoothness-penalized deconvolution
estimator gives a much more optimistic picture of the deconvolution
problem in finite samples compared to the deconvoluting kernel
estimator. The SPeD has nearly uniformly lower MISE, excepting a small
range of \(n\) in setting (iv). In setting (i), which satisfies the
conditions of Theorem~\ref{theorem:smoothrates}, the SPeD under 30\%
measurement error has better optimal MISE than the DKE under 10\%
measurement error, for sample sizes small enough to be commonly
encountered in practice.

Table~\ref{tab:equivn} lists sample sizes required for the
deconvolution estimators to attain MISE as small as the error-free
setting. We can see that in every case listed in the table, the SPeD
requires fewer samples than the DKE; in some cases the difference is
dramatic. To achieve the same MISE as estimating the Gamma mixture
density in setting (iv) in the error-free setting with a sample of
size \(n = 1,000\) when there is 10\% measurement error, the SPeD
would require \(7,963\) samples, while the DKE would require
\(388,770\) samples. In practice, this may mean the difference between
an expensive experiment and an impossible one. Another takeaway is
how strongly the required \(n\) varies with the target density. In
setting (i), the problem does not seem so bad; in setting (ii), it
seems all but impossible.

\subsection{Application to Cytotoxicity Data}
\label{sec:cyto}
\begin{figure}[h]
\center
\input{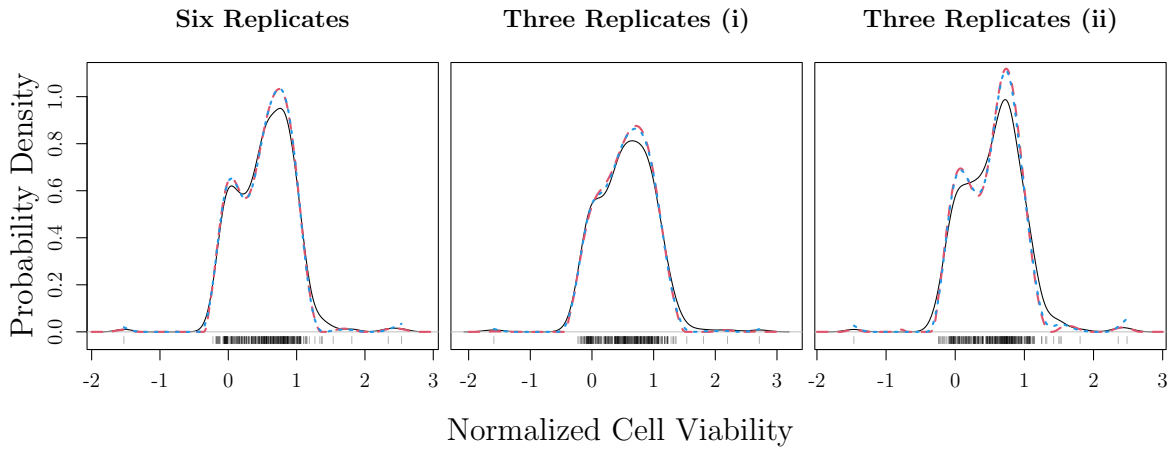}\caption{Density estimates of cytotoxicity data described in Section~\ref{sec:cyto}. Standard Gaussian kernel density estimate of the \(Y_i\) as solid black line. Smoothness-penalized density estimate of the \(X_i\) as dashed red line; QP estimator as dotted blue line. Individual data locations marked below plot. Leftmost panel is full data; right two panels each use only three of the available six replicates for each measurement.}
\label{fig:real_data}
\end{figure}
\emph{Bacillus cereus sensu lato (s.l)} is a group of closely-related
bacteria with diverse relationships to humans, including
\emph{B. thuringiensis}, which is used on crops as a pesticide,
\emph{B. anthracis}, which can cause anthrax disease, and others which
can cause other illness and spoil food \cite{ceuppens_diversity_2013}.
These bacteria are ubiquitous in many environments, their taxonomy is
``complex and equivocal,'' \cite{ceuppens_diversity_2013}, and
distinguishing between members of \emph{B. cereus s.l.} with typical
methods can be difficult. Scientists are therefore interested in
developing practical laboratory tests which can readily discriminate
between harmful representatives of this group and those less likely to
cause harm.

As one element of that investigation, a colleague requires a density
estimate of a certain conditional expectation. Suppose \(i\) is an
isolate of \emph{B. cereus s.l.}, sampled from a large
collection. Suppose it is cultured under certain conditions,
centrifuged, and the supernatant is applied to human cells. Let
\(X_i\) denote the mean normalized cytotoxicity of isolate \(i\), and
\(C_{ij} = X_i + \varepsilon_{ij}\) denote the cytotoxicity observed
the \(j\)th time this procedure is applied to isolate \(i\), and
further assume that the \(\varepsilon_{ij}\), are i.i.d., have mean
zero and are independent of \(X_i\). We are interested in the density
\(f\) of \(X_i\) as \(i\) varies over the collection of
isolates. However, the investigator only has access to a sample
approximation \(Y_i = \frac{1}{k}\sum_{j=1}^k C_{ij}\) of \(X_i\)
obtained by fixing \(i\) and repeatedly measuring the
cytotoxicity. With \(E_i = \frac1k \sum_{j=1}^k \varepsilon_{ij}\), we
are in the setting described in the introduction. We do not know the
density of \(g\) of \(E_i\) exactly, as assumed for the theory;
however, we may approximate it by \(\N(0,\sigma^2_\varepsilon/k)\) as
long as the \(\varepsilon_{ij}\) are not too skewed. We then only need
to estimate \(\sigma_\varepsilon^2\), which can be done at parametric
rates much faster than the rates involved in deconvolution.

We have been provided preliminary data, which comprise a table of
measured cytotoxicity \(C_{ij}\) from \(j = 1, \dots, k = 6\) replicates
of isolates \(i = 1, \dots, n = 313\). We have estimated
\(\sigma^2_\varepsilon\) by fitting the linear model \(C_{ij} = X_i +
\varepsilon_{ij}\) in R and extracting the residual standard
error. Tuning parameter \(\alpha\) was chosen by picking an arbitrary
provisional \(\alpha_0\), seeking \(\alpha_i\) which minimizes
\(\mathbb{E}\|s_n^\alpha - s_n^{\alpha_{i-1}}\|^2\) assuming the
\(X_i\) have pdf \(s_n^{\alpha_{i-1}}\), and iterating until
convergence. The results are shown in Figure~\ref{fig:real_data},
along with a standard kernel density estimate of the \(Y_i\). This
example has a relatively small amount of measurement error, with
proportion \(p = \Var(E)/\Var(Y) \approx 0.045\). To illustrate SPeD
with greater measurement error and to see if the number of replicates
may be reduced in future experiments, we have also split the
replicates randomly into two groups (i) and (ii), and re-fit the
estimator as if there were only three available replicates. This
yields \(p \approx 0.088\) and \(p \approx 0.082\) for groups (i) and
(ii), respectively. The two modes present in the full data are blurred
to one mode in the reduced data, but our estimator does recover two
modes in one of the two reduced data settings.

\if0\blind {
\section{Acknowledgements}
The authors thank Professor Jasna Kovac for sharing with us the
\emph{B. cereus} cytotoxicity data, and Professor Kengo Kato for
several helpful conversations.
} \fi

\clearpage

\bibliography{ref.bib}
\makeatletter\@input{supplementalaux.tex}\makeatother
\end{document}



\def\spacingset#1{\renewcommand{\baselinestretch}%
{#1}\small\normalsize} \spacingset{1}


\if0\blind
{
  \title{\bf Smoothness-Penalized Deconvolution (SPeD) of a Density Estimate: Supplemental Material}
  \author{David Kent\footnote{Department of Statistics and Data Science, Cornell University} \footnote{This work was supported by the National Science Foundation under Grant AST-1814840. The opinions, findings, and conclusions, or recommendations expressed are those of the authors and do not necessarily reflect the views of the National Science Foundation.}\ \ and David Ruppert\footnotemark[1] \footnotemark[2] \footnote{School of Operations Research and Information Engineering, Cornell University}\hspace{.2cm}}
  \maketitle
} \fi

\if1\blind
{
  \title{\bf Smoothness-Penalized Deconvolution (SPeD) of a Density Estimate: Supplemental Material}
  \maketitle
} \fi

%

\newpage
\spacingset{1.9} 

\appendix
\section{Supporting Facts}

\begin{fact}\label{fact:injective}
\(T\) is injective as long as the Fourier transform of \(g\) is a.e. non-vanishing.
\end{fact}
\begin{proof}[Proof of Fact~\ref{fact:injective}]
Suppose \(u,v \in L_2(\mathbb{R})\), and \(\|Tu - Tv\| = \|g*(u-v)\| =
0\). Then by the Plancherel Theorem and the fact that Fourier
transforms reduce convolution to multiplication, \(\|\tilde g
(\widetilde{u-v})\| = 0\). Since \(\hat g\) is a.e. non-vanishing, and
by another application of the Plancherel Theorem, it follows that
\(\|u-v\| = 0\), as needed.
\end{proof}

\begin{fact}\label{fact:adjoints}
  With \(Lv = v^{(m)}\), we have that \(L\) is self-adjoint if \(m\) is even, and skew-adjoint if \(m\) is odd:
  \begin{equation}
    L^* = (-1)^mL.
  \end{equation}
  We also have that the Fourier transform reduces \(L^*L\) to multiplication by \(\omega^{2m}\):
  \begin{equation}
    \mathcal{F}[L^*Lv](\omega) = \omega^{2m}\tilde v(\omega)
  \end{equation}
  Finally, we have that with \(Tv = g*v\), the adjoint of \(T\) is cross-correlation with \(g\), defined by \(T^*v(x) = g \star v(x) = \int g(x-t) u(x)\,dx\), and \(\mathcal{F}[T^*v](\omega) = \overline{\tilde g(\omega)} \tilde v(\omega)\).
\end{fact}
\begin{proof}[Proof of Fact \ref{fact:adjoints}]
  For the first, for \(u,v \in \mathcal{D}(L)\),
  \begin{equation}
    \begin{aligned}
      \langle u, Lv \rangle &= \int u(x) v^{(m)}(x)\,dx\\
      &= (2\pi)^{-1} \int \tilde u(\omega) \overline{(i\omega)^m \tilde v(\omega)} \,d\omega\\
      &= (2\pi)^{-1} \int (-1)^m (i\omega)^m \tilde u(\omega) \overline{\tilde v(\omega)} \,d\omega\\
      &= \int (-1)^m u^{(m)}(x)v(x)\,dx\\
      &= \langle (-1)^mLu,v \rangle.
    \end{aligned}
  \end{equation}
  For the second, we have, for \(v \in \mathcal{D}(L^*L)\),
  \begin{equation}
    \begin{aligned}
      L^*Lv(x) &= (-1)^m v^{(2m)}(x),
    \end{aligned}
  \end{equation}
  so that
  \begin{equation}
    \begin{aligned}
      \mathcal{F}[L^*Lv](\omega) &= (-1)^m (i\omega)^{2m} \tilde v(\omega)\\
      &= (-1)^mi^{2m}\omega^{2m} \tilde v(\omega)\\
      &= \omega^{2m} \tilde v(\omega),
    \end{aligned}
  \end{equation}
  since \((-1)^mi^{2m} = 1\) for integer \(m\).

  Finally, changing the order of integration, we get
  \begin{equation}\begin{aligned}
      \langle u, Tv \rangle &= \langle u, g*v \rangle\\
      &= \int u(x) \int v(t) g(x-t)\,dt\,dx\\
      &= \int \left [ \int g(x-t)u(x)\,dx\right]v(t) \,dt\\
      &=\langle g \star u,v \rangle,
  \end{aligned}\end{equation}
so that \(T^*u = g \star u\). Since cross-correlation with \(g(x)\) is convolution with \(g(-x)\), we have that \(\mathcal{F}[g\star v](\omega) = \overline{\tilde g(\omega)} \tilde v(\omega)\).
\end{proof}

\begin{fact}\label{fact:apprxMISE}
  If \(\tilde f_n^\alpha(\omega) = \tilde \varphi_\alpha(\omega) \tilde P_n(\omega)\), then
  \begin{equation}
    \mathbb{E} \|f_n^\alpha- f\|^2 = \frac1{2\pi} \left [ \int | \tilde \varphi_\alpha(\omega) \tilde g(\omega) - 1 |^2 | \tilde f(\omega) |^2\,dt + \frac1n \int |\tilde \varphi_\alpha(\omega)|^2(1 - |\tilde g(\omega)\tilde f(\omega)|^2) d\omega \right ]
  \end{equation}
\end{fact}
\begin{proof}
  The mean integrated squared error (MISE) or risk of \(f_n^\alpha\) is given by
  \[
    \mathbb{E} \int (f_n^\alpha(x) - f(x))^2\,dx = \int \Bias(f_n^\alpha(x))^2 + \Var(f_n^\alpha(x))\,dx,
  \]
  where \(\Bias(f_n^\alpha(x)) = \mathbb{E}[f_n^\alpha(x)] - f(x)\). The expectation and variance here are of course with respect to the distribution of the \(Y_j\). It is not hard to see that \(\mathbb{E}[f_n^\alpha(x)] = \mathbb{E}[\varphi_\alpha(x-Y_1)] = \varphi_\alpha*g*f(x)\). Then the Plancherel Theorem yields that
  \[
    \int \Bias(f_n^\alpha(x))^2\,dx = \int \left ( \varphi_\alpha*g*f(x) - f(x) \right )^2\,dx = \frac1{2\pi} \int | \tilde \varphi_\alpha(\omega) \tilde g(\omega) - 1 |^2 | \tilde f(\omega) |^2\,d\omega.
  \]

  For the variance, note that \(\Var(f_n^\alpha(x)) = \Var \left ( \frac1n \sum_{j=1}^n \varphi_\alpha(x-Y_j) \right ) = \frac1n \Var(\varphi_\alpha(x-Y))\). Now,
  \begin{equation}
    \begin{aligned}
     \int \Var(\varphi_\alpha(x-Y))\,dx &= \int \mathbb{E}[\varphi_\alpha(x-Y)^2] - \mathbb{E}[\varphi_\alpha(x-Y)]^2\,dx\\
    &= \int \int \varphi_\alpha(x-y)^2 g*f(y)\,dy\,dx - \int \varphi_\alpha*g*f(x)^2\,dx\\
    &= \int \varphi_\alpha(x)^2 \,dx - \int \varphi_\alpha*g*f(x)^2\,dx\\
    &= \frac1{2\pi}\int |\tilde \varphi_\alpha(\omega)|^2 \,dt - \frac1{2\pi}\int |\tilde \varphi_\alpha(\omega) \tilde g(\omega) \tilde f(\omega)|^2\,d\omega. 
    \end{aligned}
  \end{equation}
  Combining these and re-arranging gives the result.
\end{proof}

\begin{fact}\label{fact:lambertW}
Consider the map \(x \mapsto xe^x\) from \(\mathbb{R}^+ \to
\mathbb{R}^+\). This map is bijective, and its inverse is the
principal branch of the Lambert W function restricted to
\(\mathbb{R}^+\). We will denote the principal branch of the Lambert W
function by \(W(\cdot)\), so that \(W(xe^x) = x\).  The following
facts about the Lambert W function will be useful, all from
\cite{corless_lambert_1996}.
\begin{enumerate}[label=(\roman*)]
\item\label{fact:lambertW:logW} \(\log W(x) = \log(x) - W(x)\)
\item\label{fact:lambertW:eW} \(e^{W(x)} = xW(x)^{-1}\) for \(x>0\)
\item\label{fact:lambertW:Wxlogx} \(W(x\log(x)) = \log(x)\) for \(x > \frac{1}{e}\)
\item\label{fact:lambertW:asympt} \(W(x) \sim \log(x)\) as \(x \to \infty\)
\end{enumerate}
\end{fact}


\begin{prop}\label{prop:alphasmoothed}
  This \(\alpha\)-smoothed \(f\) has the following properties:
  \begin{enumerate}[label=(\roman*)]
  \item \(\tilde f^\alpha(\omega) = \tilde \varphi_\alpha(\omega) \tilde h(\omega)\),
  \item \(f^\alpha(x) = \varphi_\alpha*h(x)\),
  \item If \(f \in H^m(\mathbb{R})\), then \(\|D^mf^\alpha\| \leq \|D^mf\|\), and
  \item Under Assumption~\ref{assumption:gzeros}, \(\|f^\alpha - f\| \to 0\) as \(\alpha \to 0\).
  \end{enumerate}
\end{prop}
\begin{proof}[Proof of Proposition~\ref{prop:alphasmoothed}]
  Properties (i) and (ii) follow from the same argument as in Theorem~\ref{theorem:representing}. For (iii), observe that by the definition of \(f^\alpha\), we have that
  \begin{equation}
    G(f^\alpha;h,\alpha) \leq G(f;h,\alpha).
  \end{equation}
  But \(G(f;h,\alpha) = \|g*f - h\|^2 + \alpha \|D^mf\|^2 = \alpha\|D^mf\|^2\), so that
  \begin{equation}
    \|g*f^\alpha - h\|^2 + \alpha \|D^mf^\alpha\|^2 \leq \alpha \|D^mf\|^2,
  \end{equation}
  and the result follows from the non-negativity of \(\|g*f^\alpha - h\|^2\).

  For (iv), the Plancherel Theorem yields that
  \begin{equation}\begin{aligned}
      \| f^{\alpha} - f\|^2 &= \frac{1}{2\pi} \int  \left | \tilde \varphi_\alpha(\omega) \tilde g(\omega) \tilde f(\omega) - \tilde f(\omega) \right |^2\,d\omega\\
      &= \frac{1}{2\pi} \int  \left | \frac{\alpha \omega^{2m}}{|\tilde g(\omega)|^2 + \alpha \omega^{2m}} \right |^2|\tilde f(\omega)|^2\,d\omega
    \end{aligned}\end{equation}
where the first equality is from the Plancherel Theorem.  The first
term in the integral is positive and less than one, so the integrand
is bounded above by \(|\tilde f(\omega)|^2\), which is integrable by
the assumption that \(f \in L_2(\mathbb{R})\). Now let \(A = \{\omega
| \tilde g(\omega) \neq 0\}\). We have assumed that \(A^c\) has
measure zero, and for each \(\omega
\in A\), \(\frac{\alpha \omega^{2m}}{|\tilde g(\omega)|^2 + \alpha
\omega^{2m}} \to 0\). Thus by the dominated convergence theorem,
\begin{equation}\begin{aligned}
    \lim_{\alpha \to 0} \|f^\alpha - f\|^2 = \lim_{\alpha \to 0} \frac{1}{2\pi} \int  \left | \frac{\alpha \omega^{2m}}{|\tilde g(\omega)|^2 + \alpha \omega^{2m}} \right |^2|\tilde f(\omega)|^2\,d\omega = 0,
\end{aligned}\end{equation}
as needed.
\end{proof}


\section{Deferred Proofs}

\stepcounter{defproof}

\begin{defproof} \label{defproof:theorem:representing}
  Below are the proofs of Theorem~\ref{theorem:representing}\ref{theorem:representing:kernel}-\ref{theorem:representing:apriorisup}.
\paragraph{\ref{theorem:representing:kernel}:} Suppose 
\begin{equation}
h_n(x) = n^{-1} \sum_{i=1}^n K_\nu(x-Y_i) = (\nu n)^{-1} \sum_{i=1}^n K(\nu^{-1}(x-Y_i))
\end{equation}
is a kernel density estimate of \(h\). Then letting \(\tilde
P_n(\omega) = \frac1n \sum_{i=1}^n e^{-i\omega Y_i}\) be the Fourier
transform of the empirical distribution and \(\tilde K_{\alpha,\nu} = \tilde
\varphi_\alpha \tilde K_\nu\)
\begin{equation}
\tilde f_n^\alpha(\omega) = \tilde \varphi_\alpha(\omega) \tilde K_\nu(\omega) \tilde P_n(\omega) = \tilde K_{\alpha,\nu}(\omega) \tilde P_n(\omega).
\end{equation}
Taking inverse transforms yields
\begin{equation}
f_n^\alpha(x) = \frac1n\sum_{i=1}^n K_{\alpha,\nu}(x-Y_i).
\end{equation}
\paragraph{\ref{theorem:representing:unitint}:} This follows from the fact that \(\tilde \varphi_\alpha(0) = 1\), in which case \(\int \varphi_\alpha(x)\,dx = 1\), and the fact that \(\int u*v = \left (\int u  \right ) \left ( \int v \right )\).
\paragraph{\ref{theorem:representing:apriori}:} Since the Fourier transform of \(D^k f_n^\alpha\) is \(\mathcal{F}[D^k f_n^\alpha](\omega) = (i\omega)^k \tilde \varphi_\alpha(\omega) \tilde h_n(\omega)\), we have \(\|D^k f_n^\alpha\|^2 = (2\pi)^{-1} \int |\omega^k \tilde \varphi_\alpha(\omega) \tilde h_n(\omega)|^2\,dt \leq (2\pi)^{-1} \int |\omega^k \tilde \varphi_\alpha(\omega)|^2\,dt\). Now, dropping the second term in the denominator of \(\tilde \varphi_\alpha(\omega)\), \(|\omega^k \varphi_\alpha(\omega)| = \frac{|\omega|^k|\tilde g(\omega)|}{|\tilde g(\omega)|^2 + \alpha |\omega|^{2m}} \leq |\omega|^k|\tilde g(\omega)|^{-1} \leq \frac1\alpha M|\omega|^k|\tilde g(\omega)|^{-1}\) Similarly, \(|\omega^k \tilde \varphi_\alpha(\omega)| \leq \frac1\alpha |\omega|^{k-2m}|\tilde g(\omega)|\). Thus
\begin{equation}
  \begin{aligned}
    \|D^k f_n^\alpha\|^2 &\leq (2\pi)^{-1} \int |\omega^k \tilde \varphi_\alpha(\omega)|^2\,dt\\
                         &\leq \frac1{\alpha^2} (2\pi)^{-1} \int \max\{M^2|\omega|^{2k} |\tilde g(\omega)|^{-2},|\omega|^{2(k-2m)}|\tilde g(\omega)|^2\}\,dt\\
                           &= C_k\alpha^{-2}
  \end{aligned}
\end{equation}
To see that the integral is finite, notice that the first term in the maximum is integrable in a neighborhood of zero, while the second term is integrable outside a neighborhood of zero as long as \(k < 2m\) or \(k = 2m\) and \(|\tilde g(\omega)|^2\) is integrable. Take a maximum over the \(C_k\) for a constant that holds for all \(0 \leq k < 2m\) (or including \(2m\) if \(g\) is square-integrable).
\paragraph{\ref{theorem:representing:apriorisup}:} Similar to the previous result, we have
\begin{equation}
  \begin{aligned}
    |D^k f_n^\alpha(x)| &= \frac1{2\pi} \left | \int e^{i \omega x} (i \omega)^k \tilde \varphi_\alpha(\omega) \tilde h_n(\omega) \,d\omega \right |\\
                        &\leq C \int |\omega^k \tilde \varphi_\alpha(\omega)|\,d\omega\\
                        &\leq C \alpha^{-1} \int \max\{M|\omega|^k |\tilde g(\omega)|^{-1},|\omega|^{k-2m}|\tilde g(\omega)|\}\,d\omega\\ 
                        &\leq C_k \alpha^{-1},
  \end{aligned}
\end{equation}
  where again we notice the final integral is finite because the first term in the maximum is integrable near zero, and the second term is integrable away from zero under the conditions on \(k\) and \(\tilde g\).

\end{defproof}

\stepcounter{defproof}

\stepcounter{defproof}

\begin{figure}\label{fig:laplace_proof}
\center
\input{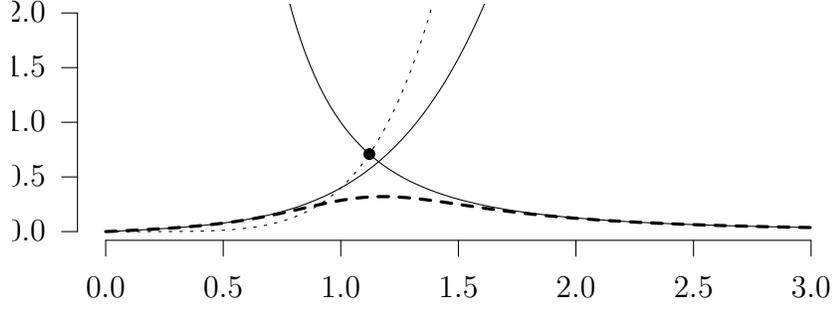}
\caption{Approach in proof of Lemma \ref{lem:systematic_bounds}, with \(m = 2, k = 2, \alpha = 1/10\). Thick dashed line is \(|\theta(\omega)|\). Thin solid lines are \(\alpha \omega^{2m-k}(1+\omega^2)^2\) and \(\omega^{-k}\). Thin dotted line is \(4\alpha\omega^{2m-k+2}\). Marked point is upper bound for \(|\theta(\omega)|\).}
\end{figure}
\begin{defproof} \label{defproof:lem:systematic_bounds}
  \begin{proof}[Proof of Lemma \ref{lem:systematic_bounds}]
  Observe that in each of these cases, \(g\) is even, so that \(|\tilde g|^2 = \tilde g^2\). We first introduce some notions that will be helpful in all three cases, and then we dive into the particulars. Note that
  \begin{equation}
    \begin{aligned}
      \tilde f^\alpha - \tilde f &= \tilde \varphi_\alpha \tilde h - \tilde f\\
      &= \tilde \varphi_\alpha \tilde g \tilde f - \tilde f\\
      &= (\tilde \varphi_\alpha \tilde g - 1)\tilde f.
    \end{aligned}
  \end{equation}
  Now,
  \begin{equation}
    \begin{aligned}
      \tilde \varphi_\alpha(\omega) \tilde g(\omega) - 1 &= \frac{\tilde g(\omega)^2}{\tilde g(\omega)^2 + \alpha \omega^{2m}} - \frac{\tilde g(\omega)^2 + \alpha \omega^{2m}}{\tilde g(\omega)^2 + \alpha \omega^{2m}}\\
      &= \frac{\alpha \omega^{2m}}{\tilde g(\omega)^2 + \alpha \omega^{2m}},
    \end{aligned}
  \end{equation}
  so if we define
  \begin{equation}
    \theta(\omega) \vcentcolon = \frac{\alpha \omega^{2m-k}}{\tilde g(\omega)^2 + \alpha \omega^{2m}},
  \end{equation}
  then we can bound the systematic error in the following way:
  
  \begin{equation}\begin{aligned}
      \|f^\alpha - f\|^2 &= (2\pi)^{-1} \|\tilde f^\alpha -\tilde f\|^2\\
      &= (2\pi)^{-1} \int \left [ \frac{\alpha \omega^{2m}}{\tilde g(\omega)^2 + \alpha \omega^{2m}} \right ]^2 |\tilde f(\omega)|^2 \,d\omega\\
      &= (2\pi)^{-1} \int |\theta(\omega)|^2 |\omega^k\tilde f(\omega)|^2 \,d\omega\\
      &\leq C \sup_{\omega>0} |\theta(\omega)|^2,
    \end{aligned}\end{equation}
  where we can restrict the domain of the supremum to strictly positive \(\omega\) because \(|\theta(\omega)|^2\) is an even, non-negative function and \(|\theta(0)|^2 = 0\).
  \paragraph{(i) (Normal errors)} For normal errors, we have \(\tilde g(\omega) = e^{-\omega^2/2}\), so that
  \begin{equation}
    |\theta(\omega)| = \frac{\alpha \omega^{2m-k}}{e^{-\omega^2} + \alpha \omega^{2m}}
  \end{equation}
  Note that 
\begin{equation}
  \theta(\omega) \leq \alpha \omega^{2m-k}e^{\omega^2}
  \qquad\text{ and }\qquad
  \theta(\omega) \leq \omega^{-k},
\end{equation}
where the upper bounds are increasing and decreasing, respectively, with \(\omega\). Thus
\[
  \sup_{\omega > 0} \theta(\omega) \leq \omega_0^{-k}
\]
with \(\omega_0\) being where the upper bounds intersect, i.e. \(\omega_0>0\) satisfies
\[
  \alpha \omega_0^{2m-k} e^{\omega_0^2} = \omega_0^{-k}.
\]
Taking \(m\)\textsuperscript{th} roots and rearranging, we find
\begin{equation}
(\omega_0^{2}/m) e^{\omega_0^2/m} = 1/(m\alpha^{1/m}),
\end{equation}
so that
\begin{equation}
  \omega_0^2/m = W(1/(m\alpha^{1/m})),
\end{equation}
whence
\begin{equation}
  \omega_0 = \left ( mW(1/(m\alpha^{1/m})) \right )^{\frac12}.
\end{equation}
Thus we have that
\begin{equation}
  \sup_{\omega > 0} |\theta(\omega)| \leq \left ( mW(1/(m\alpha^{1/m})) \right )^{-\frac{k}{2}},
\end{equation}
from which it follows that
\begin{equation}
  \sup_{\omega > 0} |\theta(\omega)|^2 \leq \left ( mW(1/(m\alpha^{1/m})) \right )^{-k},
\end{equation}
giving the result.
\paragraph{(ii) (Cauchy errors)} For Cauchy errors, we have \(\tilde g(\omega) = e^{-|\omega|}\), so
  \begin{equation}
    \theta(\omega) = \frac{\alpha \omega^{2m-k}}{e^{-2|\omega|} + \alpha\omega^{2m}}
  \end{equation}
  and seek \(\sup_{\omega>0}|\theta(\omega)|\). Now we note that for \(\omega > 0\),
  \begin{equation}
    \theta(\omega) \leq \alpha \omega^{2m-k}e^{2\omega}\qquad\text{and}\qquad\theta(\omega) \leq \omega^{-k},
  \end{equation}
  which are increasing and decreasing, respectively, with \(\omega\). Thus \(\sup_{\omega \geq 0} \theta(\omega) \leq \omega_0^{-k}\) with \(\omega_0\) the intersection of the two upper bounds, satisfying \(\alpha \omega_0^{2m-k}e^{2\omega_0} = \omega_0^{-k}\). Re-arranging that equality, we find that
  \begin{equation}
    (\omega_0/m)e^{\omega_0/m} = m^{-1}\alpha^{-\frac{1}{2m}},
  \end{equation}
  so that
  \begin{equation}
    \omega_0 = m W\left(m^{-1} \alpha^{-\frac{1}{2m}}\right).
  \end{equation}
  Thus
  \begin{equation}\begin{aligned}
      \|f^\alpha - f\|^2 &\leq C \sup_\omega \left [ \frac{\alpha \omega^{2m-k}}{e^{-2|\omega|} + \alpha\omega^{2m}} \right ]^2\\
      &\leq  \frac{C}{m^{2k} W(m^{-1}\alpha^{-\frac{1}{2m}})^{2k}}.
  \end{aligned}\end{equation}
\paragraph{(iii) (Laplace errors)} For Laplace errors, we have \(\tilde g(\omega) = (1+\omega^2)^{-1}\), so that
  \begin{equation}
    \theta(\omega) = \frac{\alpha \omega^{2m-k}}{(1+\omega^2)^{-2} + \alpha \omega^{2m}},
  \end{equation}
  i.e.
  \begin{equation}
    \theta(\omega) = \frac{\alpha \omega^{2m-k}(1+\omega^2)^2}{1 + \alpha \omega^{2m}(1+\omega^2)^2},
  \end{equation}
  Since both terms in the denominator are positive, we have that
  \begin{equation}
    \theta(\omega) \leq \alpha \omega^{2m-k}(1+\omega^2)^2\qquad\text{and}\qquad \theta(\omega) \leq \omega^{-k},
  \end{equation}
  where the upper bounds are increasing and decreasing monotonically to \(\infty\) and \(0\), respectively, so that they must intersect exactly once, and the value at the intersection is an upper bound for \(\sup_{\omega} |\theta(\omega)|\). For \(\alpha\) small enough, this intersection occurs at some \(\omega \geq 1\), and for \(\omega \geq 1\), we can simplify one of the upper bounds, so that
  \begin{equation}
    \theta(\omega) \leq \alpha \omega^{2m-k}(1+\omega^2)^2 \leq 4\alpha \omega^{2m-k+4}\qquad\text{and}\qquad \theta(\omega) \leq \omega^{-k}.
  \end{equation}
  Now the intersection of the upper bounds \(4\alpha \omega_0^{2m-k+4} = \omega_0^{-k}\) is found to occur at 
  \begin{equation}
    \omega_0 = \left ( \frac{1}{4\alpha} \right )^{\frac{1}{2m+4}},
  \end{equation}
  so that
  \begin{equation}
    \sup_\omega |\theta(\omega)| \leq \left ( \frac{1}{4\alpha} \right )^{-\frac{k}{2m+4}}
  \end{equation}
  and
  \begin{equation}
    \sup_\omega \theta(\omega)^2 \leq \left ( \frac{1}{4\alpha} \right )^{-\frac{k}{m+2}}
  \end{equation}
  Finally, this yields that
  \begin{equation}
    \|f^\alpha - f\|^2 \leq C\left ( \frac{1}{4\alpha} \right )^{-\frac{k}{m+2}},
  \end{equation}
  as needed.
\end{proof}


\end{defproof}

\stepcounter{defproof}

\stepcounter{defproof}

\stepcounter{defproof}

\begin{defproof} \label{defproof:theorem:rates}
  \begin{proof} In the first two cases, we will combine Corollary~\ref{cor:upper} and Lemma~\ref{lem:systematic_bounds} to find an upper bound for \(\mathbb{E}\|f_n^\alpha - f\|^2\). For Laplace errors, we will use Lamma~\ref{lem:systematic_bounds}, but we will need to sharpen the first term in Corollary~\ref{cor:upper}.
  \paragraph{(i) (Normal errors)} 
  From Corollary~\ref{cor:upper} and Lemma~\ref{lem:systematic_bounds}, we have that
  \begin{equation}\label{eqn:normalupperbound}
    \mathbb{E} \|f_n^{\alpha_n} - f\|^2 \leq C_1\delta_n^2/\alpha_n + \frac{C_2}{m^kW(1/(m\alpha^{1/m}))^k}
  \end{equation}
  We will use \(\alpha_n = \delta_n^2 W(\delta_n^{-\frac{2}{k}})^{k}\), which comes from the heuristic that the two summands in the upper bound ought to decrease at the same rate, i.e. by solving
  \begin{equation}
    \frac{\delta_n^2}{\alpha_n} = \log(\alpha^{-\frac{1}{m}})^{-k},
  \end{equation}
  where the \(\log(\cdot)\) is used in lieu of \(W(\cdot)\) as a
  simplification justified by the fact that \(\log(x) \sim W(x)\) as \(x
  \to \infty\). After solving for \(\alpha_n\), constants are ignored
  with wild abandon to yield a simpler \(\alpha_n\).

  We show that with the above choice of \(\alpha_n\), the bound in
  Equation~\eqref{eqn:normalupperbound} is asymptotically equivalent to
  \(C[\log \delta_n]^{-k}\). Plugging \(\alpha_n = \delta_n^2
  W(\delta_n^{-\frac2k})^{k}\) into the first term of
  Equation~\eqref{eqn:normalupperbound}, we get
  \begin{equation}\begin{aligned}
      C_1 \delta_n^2/\alpha_n &= C_1 W(\delta_n^{-\frac{2}{k}})^{-k}.
    \end{aligned}\end{equation}
  For the second, we have (equivalences are as \(\alpha_n \to 0\) or \(\delta_n \to 0\))
  \begin{equation}\begin{aligned}
      \frac{C_2}{m^kW(1/(m\alpha^{1/m}))^k}  &\sim C_2m^{-k}\left [\log(m^{-1}\alpha^{-\frac{1}{m}})\right ]^{-k}\\
      &= C_2\left [\log(m^{-m} \alpha_n^{-1})\right ]^{-k}\\
      &= C_2\left [\log(m^{-m} \delta_n^{-2} W(\delta_n^{-\frac{2}{k}})^{-k}) \right ]^{-k}\\
      &= C_2\left [ -m\log(m) - 2\log(\delta_n) - k \log( W(\delta_n^{-\frac{2}{k}}))\right ]^{-k}\\
      &= C_2\left [ -m\log(m) - 2\log(\delta_n) - k \left \{ -\frac{2}{k} \log (\delta_n) - W(\delta_n^{-\frac{2}{k}}) \right \}\right ]^{-k}\\
      &=  \frac{C_2}{\left [-m\log(m)+ k W(\delta_n^{-\frac{2}{k}})  \right ]^k}\\
      &\sim C_3W(\delta_n^{-\frac{2}{k}})^{-k},\\
    \end{aligned}\end{equation}
  where the fifth line follows from Fact~\ref*{fact:lambertW}\ref{fact:lambertW:logW}.
  The above leads us to
  \begin{equation}\begin{aligned}
      C_1\delta_n^2/\alpha_n + \frac{C_2}{m^kW(1/(m\alpha^{1/m}))^k} &= O(W(\delta_n^{-\frac{2}{k}})^{-k})\\
      &=O(\log(\delta_n^{-\frac{2}{k}})^{-k})\\
      &=O(\log(\delta_n^{-1})^{-k})\\
    \end{aligned}\end{equation}
  giving the result.
  \paragraph{(ii) (Cauchy errors)} We find our \(\alpha_n\) again by the heuristic of solving
  \begin{equation}
    \delta_n^2/\alpha_n = \log(\alpha_n^{-\frac{1}{2m}})^{-2k}.
  \end{equation}
  This yields (again, flagrantly dropping constants wherever they appear) \(\alpha_n = \delta_n^2 W(\delta^{-\frac{1}{k}})^{2k}\).
  From Corollary~\ref{cor:upper} and Lemma~\ref{lem:systematic_bounds}, we find that
  \begin{equation}\label{eqn:cauchyupperbound}
    \mathbb{E} \|f_n^{\alpha_n} - f\|^2 \leq C_1\delta_n^2/\alpha_n + \frac{C_2}{m^{2k}W(m^{-1}\alpha_n^{-\frac{1}{2m}})^{2k}}
  \end{equation}
  The first term, with our choice of \(\alpha_n\), yields
  \begin{equation}
    C_1 \delta_n^2/\alpha_n = C_1 W(\delta_n^{-\frac{1}{k}})^{-2k}.
  \end{equation}
  The second term is
  \begin{equation}\begin{aligned}
    C_2m^{-2k}W(m^{-1}\alpha^{-\frac{1}{2m}})^{-2k} &\sim C_2m^{-2k}\log(m^{-1}\alpha^{-\frac{1}{2m}})^{-2k}\\
    &= C_2m^{-2k}\log(m^{-1}\delta_n^{-\frac{1}{m}} W(\delta_n^{-\frac{1}{k}})^{-2k})\\
    &= C_2m^{-2k}[\log(m^{-1}) - \frac{1}{m} \log(\delta_n) - \frac{k}{m}\log(W(\delta_n^{-\frac{1}{k}}))]^{-2k}\\
    &= C_2m^{-2k}[\log(m^{-1}) + \frac{k}{m}W(\delta_n^{-\frac{1}{k}})]^{-2k}\\
    &\sim C_3[W(\delta_n^{-\frac{1}{k}})]^{-2k}.
  \end{aligned}\end{equation}
Combining the two terms, we find that
\begin{equation}\begin{aligned}
    \mathbb{E} \|f_n^{\alpha_n} - f\|^2 &= O(W(\delta_n^{-\frac{1}{k}})^{-2k})\\
    &= O(\log(\delta_n^{-\frac{1}{k}})^{-2k})\\
    &= O(\log(\delta_n^{-1})^{-2k})\\
\end{aligned}\end{equation}
  \paragraph{(iii) (Laplace errors)}
  We first need to sharpen the upper bound in
  Theorem~\ref{theorem:representing}\ref{theorem:representing:bound}.  We have that \(\tilde
  \varphi_\alpha(\omega) \leq 1 + \omega^2\) and
  \(\varphi_\alpha(\omega) \leq \frac12 \alpha^{\frac12} \omega^{-m}\),
  which are monotone increasing and decreasing, respectively, so
  \(\tilde \varphi_\alpha(\omega)\) is upper-bounded by the intersection
  of the two. For \(\alpha\) sufficiently small (specifically, \(\alpha
  < \frac{1}{16}\)), that intersection is at a \(\omega \geq 1\), and
  for such \(\omega\) we have \(\tilde \varphi_\alpha(\omega) \leq 1 +
  \omega^2 \leq 2\omega^2\). Solving \(2\omega^2 = \frac12
  \alpha^{\frac12}\omega^{-m}\) yields \(\omega_0 =
  (16\alpha)^{-\frac{1}{2(m+2)}}\), so that \(\tilde
  \varphi_\alpha(\omega) \leq 2(16\alpha)^{-\frac{1}{m+2}}\). Using
  the same argument as the proof of Lemma~\ref{lem:randompart} yields
  that \(\mathbb{E}\|f_n^\alpha - f^\alpha\|^2 \leq C \delta_n^2
  \alpha^\frac{-2}{m+2}\). Now, from \(\mathbb{E}\|f_n^\alpha - f\|^2
  \leq 2 \mathbb{E}\|f_n^\alpha - f^\alpha\|^2 + 2\|f^\alpha - f\|^2\),
  Lemma~\ref{lem:systematic_bounds}, and the preceding, we have that,
  for constants \(C\) and \(D\),
  \begin{equation}\label{eqn:laplace_err_bound}
    \mathbb{E}\|f_n^\alpha - f\|^2 \leq C\delta_n^2 \alpha^{\frac{-2}{m+2}} + D\alpha_n^{\frac{k}{m+2}}
  \end{equation}

  We find our \(\alpha_n\) again by the heuristic of solving
  \(\delta_n^2\alpha_n^{\frac{-2}{m+2}} = \alpha_n^{\frac{k}{m+2}}\), 
  which yields \(\alpha_n = \delta_n^{\frac{2(m+2)}{k+2}}\). Plugging
  this back into Equation~\eqref{eqn:laplace_err_bound} results in, we find
  \begin{equation}\begin{aligned}
      \|f_n^\alpha - f\|^2 &\leq C\delta_n^2 \alpha^{\frac{-2}{m+2}} + D\alpha_n^{\frac{k}{m+2}}\\
      &= (C+D) \delta_n^{\frac{2k}{k+2}},
    \end{aligned}\end{equation}
  giving the result.
\end{proof}


\end{defproof}

\stepcounter{defproof}

\begin{defproof} \label{defproof:prop:particularlaplace}
  \begin{proof}[Proof of Proposition \ref{prop:particularlaplace}] Let
  \(f_n^\lambda\) denote the kernel estimator of \(f\) described in
  Equation (4) of \cite{zhang_fourier_1990}, or Equation~\ref{eqn:dkde}
  here, using the kernel \(k(\cdot)\) and bandwidth \(\lambda\). We will
  prove this proposition by relating the smoothness-penalized estimator
  \(f_n^\alpha\) to the deconvoluting kernel estimator
  \(f_n^\lambda\). Specifically, we will show that the upper bound for
  \(\mathbb{E}\|f_n^\lambda - f\|^2\) demonstrated in the proof of
  Theorem 1 of \cite{zhang_fourier_1990} essentially holds also for
  \(\mathbb{E}\|f_n^\alpha - f\|^2\), but with an added \(2\cdot
  4^{\frac1{m+2}}C\lambda^{-1}\alpha^{\frac1{m+2}}\). Since it is
  shown in Example 3 of \cite{zhang_fourier_1990} that
  \(\mathbb{E}\|f_n^\lambda - f\|^2 = O(n^{-\frac27})\), this is
  sufficient to prove Proposition~\ref{prop:particularlaplace} with the
  specified choice of \(\alpha\).

  By Theorem~\ref{theorem:representing}\ref{theorem:representing:kernel}, we can write
  \(f_n^\alpha\) as a kernel-like estimator
  \begin{equation}
    f_n^\alpha(x) = \frac1n \sum_{j=1}^n K_{\alpha,\lambda}(Y_j,x)
  \end{equation}
  with kernel \(K_{\alpha,\lambda}(Y,x) = \frac1{2\pi}\int \tilde
  \varphi_\alpha(\omega) \tilde k(\lambda \omega)
  e^{i\omega(x-Y)}\,d\omega.\) Then we have
  \begin{equation}
    \begin{aligned}
      \mathbb{E}\|f_n^\alpha - f\|^2 &= \mathbb{E}\int (f_n^\alpha(x) - f(x))^2\,dx\\
      &= \int \Var(f_n^\alpha(x))\,dx + \int (\mathbb{E}[f_n^\alpha(x)] - f(x))^2\,dx\\
      &= \frac1n\int \Var(K_{\alpha,\lambda}(Y,x))\,dx + \int (\mathbb{E}[K_{\alpha,\lambda}(Y,x)] - f(x))^2\,dx\\
    \end{aligned}
  \end{equation}
  Now, for the variance we find
  \begin{equation}
    \begin{aligned}
      \frac{2\pi}n\int \Var(K_{\alpha,\lambda}(Y,x))\,dx &\leq \frac{2\pi}n\int \mathbb{E}K_{\alpha,\lambda}(Y,x)^2\,dx\\
      &= \frac1n\mathbb{E}\int \tilde K_{\alpha,\lambda}(Y,x)^2\,dx\\
      &= \frac1n\mathbb{E}\int |\tilde \varphi_\alpha(\omega) \tilde k(\lambda\omega)e^{-i\omega Y}|^2\,dx\\
      &\leq \frac1n\int |\tilde \varphi_\alpha(\omega) \tilde k(\lambda\omega)|^2\,dx\\
      &\leq \frac1n\int |\tilde k(\lambda\omega)/\tilde g(\omega)|^2\,dx,
    \end{aligned}
  \end{equation}
  where the final line follows from \(|\tilde \varphi_\alpha(\omega)
  \leq 1/\tilde g(\omega)|\) for all \(\omega\), and the final line is
  identical to the upper bound for the variance of \(f_n^\lambda\) in \cite{zhang_fourier_1990}.

  For the bias,
  \begin{equation}
    \begin{aligned}
      2\pi \int (\mathbb{E}[K_{\alpha,\lambda}(Y,x)] - f(x))^2\,dx
      &\leq 4\pi \int (\mathbb{E}[K_{\alpha,\lambda}(Y,x)] - \mathbb{E}[f_n^\lambda(x)])^2 \\
      &\qquad+ 4\pi \int (\mathbb{E}[f_n^\lambda(x)] - f(x))^2\,dx
    \end{aligned}
  \end{equation}
  The second term here is twice the integrated squared bias of
  \(f_n^\lambda\). We need only find a good bound for the first term:
  \begin{equation}
    \begin{aligned}
      4\pi \int (\mathbb{E}[K_{\alpha,\lambda}(Y,x)] - \mathbb{E}[f_n^\lambda(x)])^2\,d\omega &= 2 \int |\mathbb{E}[\tilde K_{\alpha,\lambda}(Y,\omega)] - \mathbb{E}[\tilde f_n^\lambda(\omega)]|^2\,d\omega\\
      &= 2\int |\tilde \varphi_\alpha(\omega)\tilde k(\lambda \omega) \tilde f(\omega) - \tilde k(\lambda \omega)\tilde f(\omega)|^2\,d\omega\\
      &= 2\int \left |\frac{\alpha\omega^{2m-1}}{|\tilde g(\omega)|^2 + \alpha\omega^{2m}}\right|^2|\tilde k(\lambda \omega) \omega\tilde f(\omega)|^2\,d\omega\\
      &\leq 2\int \left |\frac{\alpha\omega^{2m-1}}{|\tilde g(\omega)|^2 + \alpha\omega^{2m}}\right|^2|\omega\tilde f(\omega)|^2\,d\omega\\
      &\leq 2C(4\alpha)^{\frac{1}{m+2}}.
    \end{aligned}
  \end{equation}
  The penultimate inequality follows from the fact that \(k(\cdot)\)
  is a pdf, so that \(\tilde k(\omega) \leq 1\) for all
  \(\omega\). The final inequality follows from the fact that
  \(\sup_\omega\left |\frac{\alpha\omega^{2m-1}}{|\tilde g(\omega)|^2
      + \alpha\omega^{2m}}\right|^2 \leq (4\alpha)^{\frac1{m+2}}\),
  demonstrated in the proof of
  Lemma~\ref*{lem:systematic_bounds}\ref{lem:systematic_bounds:laplace}. Putting
  it all together, we have
  \begin{equation}
    \begin{aligned}
    \mathbb{E}\|f_n^\alpha - f\|^2 &\leq \frac{1}{2\pi n}\int |\tilde k(\lambda\omega)/\tilde g(\omega)|^2\,dx \\
    &\qquad +2\int (\mathbb{E}[f_n^\lambda(x)] - f(x))^2\,dx\\
    &\qquad +\frac{C}{2\pi}(4\alpha)^{\frac{1}{m+2}}.
    \end{aligned}
  \end{equation}

  Theorem 1 and Example 3 of \cite{zhang_fourier_1990} show that the
  first two terms are \(O(n^{-\frac27})\), and with \(\alpha =
  O(n^{-\frac{2(m+2)}{7}})\), the third term is as well.

\end{proof}


\end{defproof}

\begin{defproof} \label{defproof:lem:Caclosed}
  \begin{proof}[Proof of Lemma \ref{lem:Caclosed}]
Clearly \(\mathcal{C}_a\) is convex. To see that it is closed, we must show
that any limit point of \(\mathcal{C}_a\) is also an element of \(\mathcal{C}_a\). To
that end, suppose \(\{\psi_n\}\subset \mathcal{C}_a\) and that there is some
\(\psi \in L_2(\mathbb{R})\) with \(\|\psi_n - \psi\| \to 0\). Clearly
\(\psi\) is non-negative and has support contained in \([-a,a]\). We
need only show that \(\int \psi = 1\). Consider
\begin{equation}\begin{aligned}
\left | 1 - \int \psi \right | &= \left | \int (\psi_n - \psi) \right |\\
&\leq \int |\psi_n - \psi|\\
&= \int |\mathds{1}_{[-a,a]} (\psi_n - \psi)|\\
&\leq \|\mathds{1}_{[-a,a]}\|\cdot\|\psi_n - \psi\|\\
&= 2a\cdot\|\psi_n - \psi\| \to 0,
\end{aligned}\end{equation}
which gives the result.
\end{proof}

\end{defproof}

\begin{defproof} \label{defproof:lem:projf}
  \begin{proof}[Proof of Lemma \ref{lem:projf}]
Let \(T_a = \int \mathds{1}_{[-a,a]^c}f\) be the quantity of mass \(f\) puts
outside of \([-a,a]\), and note that
\begin{equation}
T_a = \mathbb{P}(|X| > a) = \mathbb{P}(|X|^\beta > a^\beta) \leq \mathbb{E}[|X|^\beta]a^{-\beta}
\end{equation}
by Markov's inequality. We introduce an element of \(\mathcal{C}_a\) which is close to \(f\):
\begin{equation}
\psi_a = \mathds{1}_{[-a,a]}(f + T_a/2a).
\end{equation}
Then
\begin{equation}\begin{aligned}
\|\psi_a - f\| &= \|\mathds{1}_{[-a,a]}(f + T_a/2a) - \mathds{1}_{[-a,a]}f - \mathds{1}_{[-a,a]^c}f\|\\
&= \|\mathds{1}_{[-a,a]}T_a/2a - \mathds{1}_{[-a,a]^c}f\|\\
&\leq \|\mathds{1}_{[-a,a]}T_a/2a\| + \|\mathds{1}_{[-a,a]^c}f\|\\
&= T_a + \|\mathds{1}_{[-a,a]^c}f\|.
\end{aligned}\end{equation}
Now,
\begin{equation}\begin{aligned}
\|\mathds{1}_{[-a,a]^c}f\| &= \int \mathds{1}_{[-a,a]^c} f^2\\
&\leq \int \mathds{1}_{[-a,a]^c} f = T_a\\
\end{aligned}\end{equation}
for \(a\) large enough that \(f(t) < 1\) for all \(|t| > a\).
Thus \(\|P_{\mathcal{C}_a}f - f\| \leq \|\psi_a - f\| \leq 2T_a \leq 2\mathbb{E}[|X|^\beta]a^{-\beta}\), as needed. A similar approach yields the exponential version.
\end{proof}

\end{defproof}

\stepcounter{defproof}

\begin{figure}
\center
\includegraphics{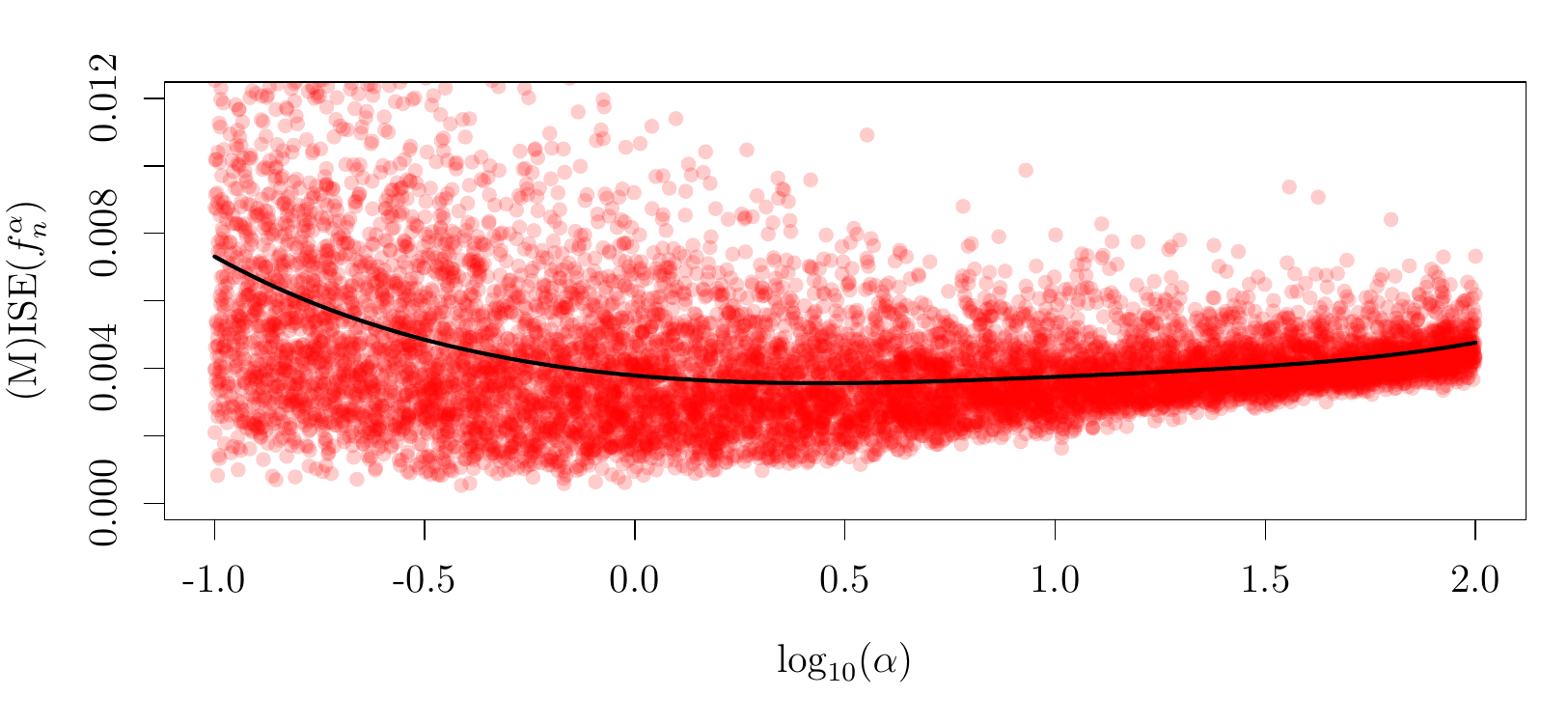}
\includegraphics{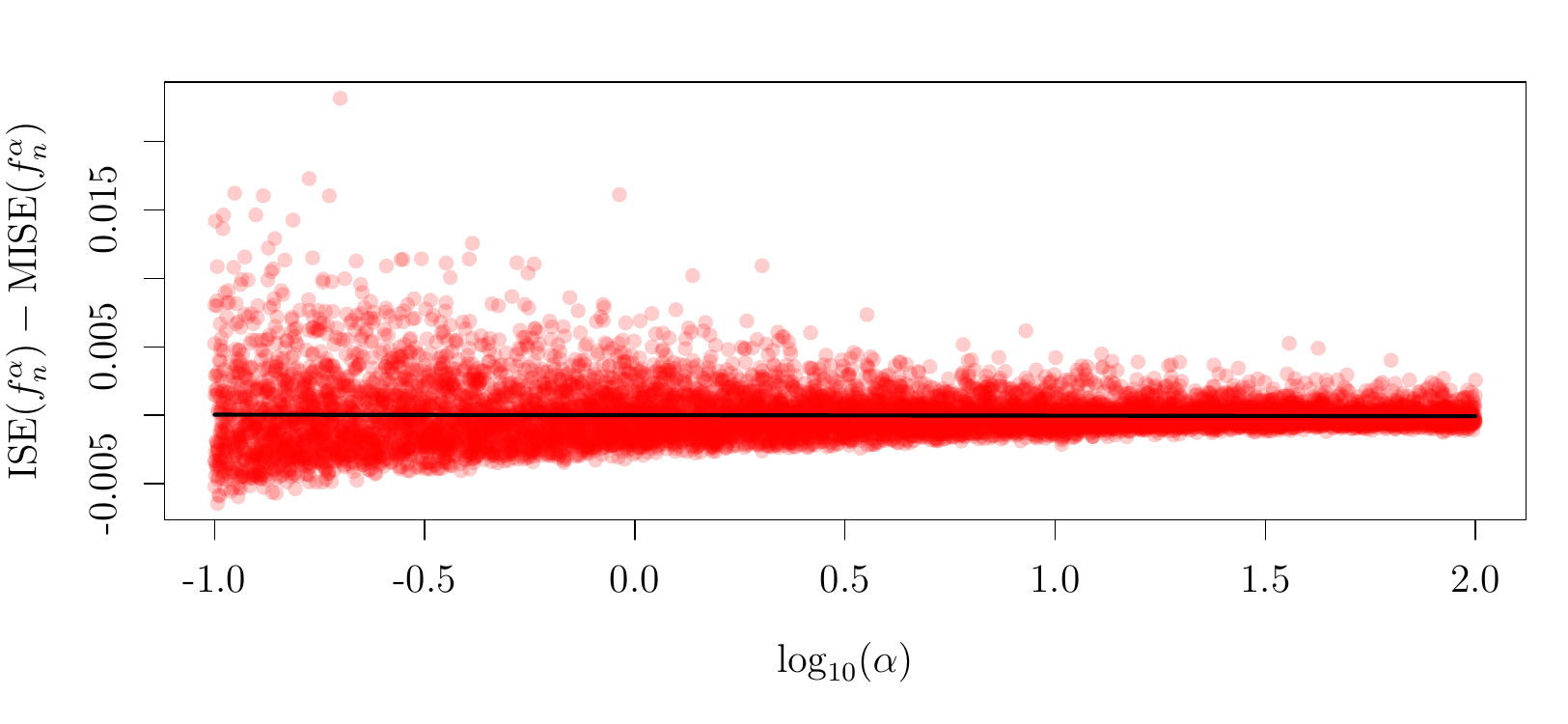}
\caption{\(n_{sim} = 10^4\) simulations from the bimodal gamma mixture
density (iv), with normal measurement errors \(E\), \(\Var(E) =
\Var(X)/10\), and \(n = 100\). The estimator was computed according to
Section~\ref{sec:computing}. In top panel, points are the integrated
squared errors, and the line is the approximate \(\mise(f_n^\alpha)\)
computed according to Equation~\ref{eqn:apprxMISE}. In bottom panel,
points are the difference between the approximated MISE and the
simulated integrated squared errors; the line is a smoothing spline
with tuning parameter chosen by generalized cross-validation.}
\label{fig:apprxMISE}
\end{figure}

\section{Technical Results}

First, we present a very slightly modified version of \cite[Theorem 2.C]{zeidlerAppliedFunctionalAnalysis1995}.

\begin{lemma}\label{lem:ritz}
Consider the problem
\begin{equation}
  u = \argmin_{v \in H^m(\mathbb{R})} \|g*v - h_n\|^2 + \alpha \|v^{(m)}\|^2
\end{equation}
and its approximation
\begin{equation}
  u_n = \argmin_{v \in X_n} \|g*v - h_n\|^2 + \alpha \|v^{(m)}\|^2
\end{equation}
If \(X_n\) is a finite-dimensional linear subspace of \(H^{m}(\mathbb{R})\), then for sufficiently small \(\alpha\),
\begin{equation}
  \|u - u_n\|^2 \leq \frac{c}{\alpha^2} \cdot \inf_{v \in X_n} [\|u - v\|^2 + \|u^{(m)} - v^{(m)}\|^2]
\end{equation}
\end{lemma}
\begin{proof}
  First note that with \(a(u,v) = \langle g*u, g*v \rangle + \alpha \langle u^{(m)},v^{(m)} \rangle\) and \(b(u) = \langle g*u,h_n \rangle\), we have \(\|g*v - h_n\|^2 + \alpha \|v^{(m)}\|^2 = a(v,v) - 2b(v) + \|h_n\|^2\). It can be easily checked that \(a(\cdot,\cdot)\) is symmetric, bilinear, and in a moment we will show that it is bounded and strongly positive with respect to the Sobolev norm \(\|u\|^2_X = \|u\|^2 + \|u^{(m)}\|^2\). Applying \cite[Theorem 2.A]{zeidlerAppliedFunctionalAnalysis1995} (and noting that \(X_n\) is a Hilbert space under the same norm), we find that \(a(u,v) = b(v)\) for all \(v \in H^{m}(\mathbb{R})\), and \(a(u_n,v) = b(v)\) for all \(v \in X_n \subset H^m(\mathbb{R})\). Substracting these yields that \(a(u - u_n,v) = 0\) for all \(v \in X_n\), and taking \(v \vcentcolon= u_n\), we also see that \(a(u - u_n,u_n) = 0\). Subtracting these last two from the identity \(a(u-u_n,u) = a(u-u_n,u)\) yields 
  \begin{equation}
    \label{eqn:aequality}
    a(u - u_n,u-u_n) = a(u-u_n,u-v)\text{ for all }v \in X_n.
  \end{equation}
  First we show that \(\frac{\alpha}{c} \|v\|^2 \leq a(v,v)\) for some \(c>0\) and sufficiently small \(\alpha>0\). Choose \(c'\geq 1\) so that \(c'(|\tilde g(\omega)|^2 + |i\omega|^{2m}) \geq 1\) for all \(\omega \in \mathbb{R}\). Then
  \begin{equation}
    \begin{aligned}
      \|v\|_X^2 &= \|v\|^2 + \|v^{(m)}\|^2\\
      &= \frac1{2\pi} \int |\tilde v(\omega)|^2\,d\omega + \|v^{(m)}\|^2\\
      &\leq \frac{c'}{2\pi} \int |\tilde g(\omega) \tilde v(\omega)|^2\,d\omega + \frac{c'}{2\pi} \int |(i\omega)^m \tilde v(\omega)|^2 \,d\omega + \|v^{(m)}\|^2\\
      &= c' \|g*v\|^2 + (1+c')\|v^{(m)}\|^2\\
      &\leq 2c'(\|g*v\|^2 + \|v^{(m)}\|^2)\\
      &\leq \frac{2c'}{\alpha} (\|g*v\|^2 + \alpha \|v^{(m)}\|^2)\\
      &= \frac{2c'}{\alpha} a(v,v)
    \end{aligned}
  \end{equation}
  for sufficiently small \(\alpha\).

  Now we show that for any \(v,w \in H^m(\mathbb{R})\), we have \(|a(v,w)| \leq \sqrt{2} \|v\|_X \|w\|_X\) for small enough \(\alpha\). We begin by applying the inequality \((a+b)^2 \leq 2a^2 + 2b^2\):
  \begin{equation}
    \begin{aligned}
      |a(v,w)|^2 = |\langle g*v,g*w\rangle + \alpha \langle v^{(m)},w^{(m)}\rangle|^2 &\leq 2\langle g*v,g*w\rangle^2 + 2\alpha^2 \langle v^{(m)},w^{(m)}\rangle^2 \\
      \text{(a)}\qquad&\leq 2(\|g*v\|^2 \|g*w\|^2 + \|v^{(m)}\|^2 \|w^{(m)}\|^2)\\
      \text{(b)}\qquad&\leq 2(\|v\|^2\|w\|^2 + \|v^{(m)}\|^2 \|w^{(m)}\|^2)\\
      &\leq 2\|v\|_X^2 \|w\|_X^2,
    \end{aligned}
  \end{equation}
  where (a) is by Cauchy Schwarz and for \(\alpha\) small enough, (b) is by Young's convolution inequality, and the final line follows from adding the positive term \(\|v\|^2\|w^{(m)}\|^2 + \|v\|^2 \|w^{(m)}\|^2\).

  Plugging in \(u - u_n\) to the first result, and then stringing them together, we find, for all \(v \in X_n\),
  \begin{equation}
    \begin{aligned}
      \left ( \frac{\alpha}{2c'} \right )^2 \|u-u_n\|_X^4 &\leq a(u-u_n,u-u_n)^2 \\
      &= a(u-u_n,u-v)^2\\
      &\leq 2\|u-u_n\|_X^2 \|u-v\|_X^2,
    \end{aligned}
  \end{equation}
  so that for all \(v \in X_n\), we have \(\|u-u_n\|_X^2 \leq \frac{c}{\alpha^2} \|u-v\|_X^2\). Thus
  \begin{equation}
    \|u-u_n\|^2 \leq \|u-u_n\|_X^2 \leq \frac{c}{\alpha^2} \inf_{v \in X_n} [\|u - v\|^2 + \|u^{(m)} - v^{(m)}\|^2],
  \end{equation}
  as needed.
\end{proof}

\begin{figure}
\center
\input{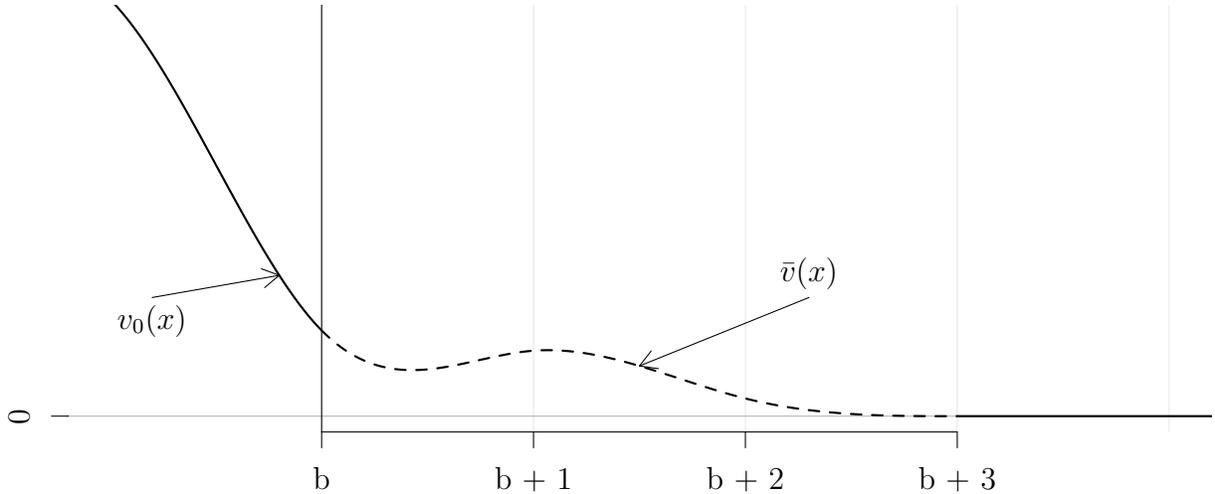}
\caption{Picture of Lemma~\ref{lem:splineextension} for \(m = 3\). The function \(v_0(x)\), drawn as a solid line, is a spline defined on an interval \([a,b]\). The function \(\bar v(x)\), drawn as a dashed line, extends \(v_0(x)\) to a spline on \([a,b+m]\) which is equal to \(v_0(x)\) on \([a,b]\) but has vanishing derivatives of order up to \(m-1\) at \(b+m\).}
\label{fig:lemma16}
\end{figure}

To apply Lemma~\ref{lem:ritz}, we need our spline space \(X_n \subset H^m(\mathbb{R})\), but we'd like to use approximation theory that has been developed for splines on an interval \([a,b]\). We show in the following that such splines can be extended to functions with domain \(\mathbb{R}\) which belong to \(H^m(\mathbb{R})\), i.e. functions which have \(m\) square-integrable weak derivatives on \(\mathbb{R}\), and furthermore that the function and its derivatives on \(\mathbb{R}\setminus[a,b]\) are small.

In the following lemma, we show essentially that a spline on an interval \([a,b]\) can be extended to a spline on the entire real line, and that the norm of the added piece is modulated by the value of the derivatives at the endpoints \(a\) and \(b\).
\begin{lemma}\label{lem:splineextension}
  Suppose a function \(v_0(x)\) is an \(r\)th-order spline on \([a,b]\), \(r>m\). Then there is a function \(\bar v~\vcentcolon~[b,b+m] \to \mathbb{R}\) satisfying \(\bar v^{(k)}(b+m) = 0\) for \(k = 0, \dots, m-1\), which is polynomial on \([b+k,b+k+1]\) for \(k = 0, \dots, m-1\), so that the extension \(v~\vcentcolon~[a,b+m]\to\mathbb{R}\) given by \(v(x) = v_0(x)\) if \(x \in [a,b]\) and \(v(x) = \bar v(x)\) if \(x \in (b,b+m]\) is an \(r\)th order spline on \([a,b+m]\) with derivatives up to order \(m-1\) vanishing at \(b+m\). Furthermore, \(\int_b^{b+m} \bar v(x)^2\,dx~\leq~C_0 \sum_{k=0}^{r-1} v_0^{(k)}(b)^2\) and \(\int_b^{b+m} \bar v^{(m)}(x)^2\,dx~\leq~C_m \sum_{k=0}^{r-1} v_0^{(k)}(b)^2\)
\end{lemma}

\begin{proof}
  WLOG, let \(b = 0\).
  Since \(v_0(x)\) is an \(r\)th-order spline, it is represented by an \(r\)th-order polynomial near \(0\), say \(v_0(x) = P_0(x)\) on \((-\varepsilon,0]\).
  Then if we write \(\bar v(x) = P_0(x) + \sum_{i=1}^m c_i(x - (i-1))_+^{r-1}\), the function \(v(x)\) defined above will be an \(r\)th-order spline on \([a,m]\).

  First, we show th
  The requirement \(\bar v^{(k)}(m) = 0\) for \(k = 0, \dots, m-1\) can be expressed as the \(m\) linear equations in the \(c_i\):
  \begin{equation}
  \sum_{i=1}^m -\frac{(r-1)!}{(r-1-k)!} c_i (m - (i-1))_+^{r-1-k} = P_0^{(k)}(m),
  \end{equation}
  for \(k = 0, \dots, m-1\).
  Then if \(\mathbf{A}\) is the \(m \times m\) matrix with entries \(\mathbf{A}_{ji} = -\frac{(r-1)!}{(r-j)!} (m - (i-1))_+^{r-j}\) and \(\mathbf{q}\) is the \(m \times 1\) vector with entries \(\mathbf{q}_j = P_0^{(j-1)}(m)\), we see that the coefficients \(\mathbf{c} = \mathbf{A}^{-1}\mathbf{q}\) are a linear function of the first \(m\) derivatives of \(P_0(x)\) at \(x = m\).
  Now, letting \(\mathbf{p}\) be the \(r \times 1\) vector with entries \(\mathbf{p}_i = P_0^{(i-1)}(0)\), and \(\mathbf{B}\) the \(m \times r\) matrix with entries \(\mathbf{B}_{ji} = \frac{m^{i-1}}{(i-j)!}\), we see that \(\mathbf{c} = \mathbf{A}^{-1}\mathbf{B}\mathbf{p}\). Finally, we see that the coefficients of \(\bar v\) against the basis mentioned are given by \(\mathbf{C}\mathbf{p}\) where \(\mathbf{C} = [\mathbf{I}^T_r\;\;\mathbf{B}^T\mathbf{A}^{-T}]^T\).

  Now, \(\int_0^{m} \bar v^{(k)}(x)^2\,dx = \mathbf{d}^T\mathbf{G}_k\mathbf{d}\), where \(\mathbf{d}\) is the vector of coefficients against our basis, and \(\mathbf{G}_k\) is the \((m+r)\times(m+r)\) Gramian matrix of inner products of the \(k\)th derivatives of the basis functions.
  Thus we have \(\int_0^m \bar v(x)^2\,dx = \mathbf{p}^T\mathbf{C}^T \mathbf{G}_0 \mathbf{C}\mathbf{p}\) and \(\int_0^m \bar v^{(m)}(x)^2\,dx = \mathbf{p}^T\mathbf{C}^T \mathbf{G}_m \mathbf{C}\mathbf{p}\), so \(\int_0^m\bar v(x)^2\,dx \leq C_0 \sum_{i=1}^r \mathbf{p}_i^2 = C_0 \sum_{i=1}^r P_0^{(i-1)}(0)^2\) and \(\int_0^m\bar v^{(m)}(x)^2\,dx \leq  C_m \sum_{i=1}^r P_0^{(i-1)}(0)\) where \(C_0\) and \(C_m\) are the largest eigenvalues of \(\mathbf{C}^T\mathbf{G}_0\mathbf{C}\) and \(\mathbf{C}^T\mathbf{G}_m\mathbf{C}\), respectively. Recalling that \(v_0(x) = P_0(x)\) on \((-\varepsilon,0]\), this gives the result.
\end{proof}
\begin{cor}\label{cor:extend}
  Let \(v_0: [a,b] \to \mathbb{R}\) be an \(r\)th order spline with maximum grid spacing \(\Delta\). We may extend \(v_0\) to a function \(v: [a - \Delta m,b + \Delta m] \to \mathbb{R}\) with derivatives up to order \(m-1\) vanishing at \(a-\Delta m\) and \(b + \Delta m\), with \(v\) also an \(r\)th-order spline with maximum grid spacing \(\Delta\), by tacking on functions \(\overline v_-(x): [a-\Delta m,a] \to \mathbb{R}\) and \(\overline v_+(x): [b,b+\Delta m] \to \mathbb{R}\) which have the property that
  \[
    \begin{aligned}
      \int_{a - \Delta m}^a \overline v_-(x)^2 + \overline v_-^{(m)}(x)^2\,dx &+ \int_b^{b+\Delta m} \overline v_+(x)^2 + \overline v_+^{(m)}(x)^2\,dx \\
      &\leq C_- \sum_{k=0}^{r-1} \Delta^{2k+1} v_0^{(k)}(a)^2 + C_+ \sum_{k=0}^{r-1} \Delta^{2k+1} v_0^{(k)}(b)^2
    \end{aligned}
  \]
\end{cor}
\begin{proof}
  Let \(u_0(x) = v_0(\Delta x)\) on \([a/\Delta,b/\Delta]\) (an \(r\)th order spline with unit maximum grid spacing) and apply Lemma~\ref{lem:splineextension} to each end (i.e. apply to \(u_0\) and then to its reflection over \(x=0\)), yielding functions \(\overline u_-(x)\) and \(\overline u_+(x)\), which extend \(u_0\) to an \(r\)th order spline \(u\) with unit maximum grid spacing on \([a/\Delta - m, b/\Delta + m]\), and with derivatives up to order \(m-1\) vanishing at the endpoints of the interval. Then let \(\overline v_-(x) = \overline u_-(x/\Delta)\) and re-arrange the upper bound in Lemma~\ref{lem:splineextension}.
\end{proof}

\setcounter{defproof}{14}
\begin{defproof}[Proof of Theorem~\ref{theorem:splineasgood}]\label{defproof:theorem:splineasgood}
  Notice that \(X_n \subset H^m(\mathbb{R})\) if we set \(s(x) = 0\) for \(x \not\in [a - \gamma^*m,b+\gamma^*m]\) for all \(s \in X_n\). Now, Lemma~\ref{lem:ritz} yields that
  \begin{equation}
    \label{eqn:ritzbound}
    \|s_n^\alpha - f_n^\alpha\|^2 \leq \frac{c}{\alpha^2} \inf_{v \in X_n} [ \|v - f_n^\alpha\|^2 + \|D^m(v - f_n^\alpha)\|^2 ]
  \end{equation}
  We will exhibit a particular \(v^* \in X_n\) such that \(\frac{1}{\alpha^2} \mathbb{E}[ \|v^* - f_n^\alpha\|^2 + \|D^m(v^* - f_n^\alpha)\|^2  ] = o(\mathbb{E}\|f - f_n^\alpha\|^2)\)

  Consider the spline space \(\mathscr{S}_r([0,1],\Delta)\) of splines with simple knots at \(0,\Delta, 2\Delta, \dots, 1\).
  Let \(u(x) = f_n^\alpha( (b-a)x + a )\), and \(\hat u(x) = Qu(x)\) be the quasi-interpolant of \(u\) in \(\mathscr{S}_r([0,1],\Delta)\) given in \cite[Theorem 6.18]{schumaker_spline_2007}.
  We will then set \(v^*(x) = \hat u( (x-a)/(b-a) )\) for \(x \in [a,b]\), a spline with knot spacing \(\gamma = (b-a)\Delta\).
  Extend \(v^*(x)\) to \([a-\gamma^* m,b+\gamma^* m]\) by Corollary~\ref{cor:extend}, and set \(v^*(x) = 0\) for \(x \not \in [a-\gamma^* m,b + \gamma^* m]\). Note that the knot spacing \(\gamma^*\) on the extension need not match the knot spacing on \([a,b]\).
  Now we will deal with \(\|v^* - f_n^\alpha\|^2 + \|D^m(v^* - f_n^\alpha)\|^2\) by treating separately the parts on \([a,b]\) and \(\mathbb{R}\setminus [a,b]\).
  Note that
  \begin{equation}
    \begin{aligned}
      \|v^* - f_n^\alpha\|^2 &+ \|D^m(v^* - f_n^\alpha)\|^2\\
                             &\leq \|v^* - f_n^\alpha\|_{L_2([a,b])}^2 + \|D^m(v^* - f_n^\alpha)\|_{L_2([a,b])}^2\\
                             &\qquad\qquad+ 2(\|v^*\|_{L_2(\mathbb{R}\setminus[a,b])}^2 + \|D^mv^*\|_{L_2(\mathbb{R}\setminus[a,b])}^2)\\
                             &\qquad\qquad+ 2(\|f_n^\alpha\|_{L_2(\mathbb{R}\setminus[a,b])}^2 + \|D^mf_n^\alpha\|_{L_2(\mathbb{R}\setminus[a,b])}^2).
    \end{aligned}
  \end{equation}
  Thus as long as \(v^*\) is close to \(f_n^\alpha\) on \([a,b]\), and \(v^*\) and \(f_n^\alpha\) are both small outside of \([a,b]\), we will have that \(v^*\) is close to \(f_n^\alpha\) on all of \(\mathbb{R}\). We will show the following, for small enough \(\gamma\), and for \(a < 0 < b\):
  \begin{enumerate}[label=(\roman*)]
  \item \label{bounds:itemi} \(\|v^* - f_n^\alpha\|_{L_2([a,b])}^2 + \|D^m(v^* - f_n^\alpha)\|_{L_2([a,b])}^2 \leq C\alpha^{-2}\gamma^{2(r-m)}\)
  \item \label{item:itemii} \(\|v^*\|_{L_2(\mathbb{R}\setminus[a,b])}^2 + \|D^mv^*\|_{L_2(\mathbb{R}\setminus[a,b])}^2 \leq C\alpha^{-2}\gamma^*(1+\gamma)\)
  \item \label{item:itemiii}\(\|f_n^\alpha\|_{L_2(\mathbb{R}\setminus[a,b])}^2 + \|D^mf_n^\alpha\|_{L_2(\mathbb{R}\setminus[a,b])}^2 \leq \frac{C}{\alpha^2 (|a|\wedge|b|)} \int_{-\infty}^\infty |x| h_n(x)\,dx.\)
  \end{enumerate}

  To demonstrate item~\ref{bounds:itemi}, apply \cite[Theorem 6.25]{schumaker_spline_2007} with \(p = q = 2\) to yield \(\|D^k(u - Qu)\|_{L_2([0,1])} \leq C \Delta^{\sigma - k} \omega_{r-\sigma}(D^\sigma u;\Delta)_{L_2([0,1])}\), where \(m \leq \sigma \leq 2m\) is the order of a Sobolev space containing \(f_n^\alpha\). Now apply \cite[Theorem 2.59, (2.120)]{schumaker_spline_2007} to bound the modulus of smoothness, yielding \(\|D^k(u - Qu)\|_{L_2([0,1])} \leq C \Delta^{r-k} \|D^r u\|_{L_2([0,1])}\). Combining this bound with the fact that \(\|D^k(f_n^\alpha - v^*)\|_{L_2([a,b])} = (b-a)^{-(k-1)}\|D^k(u-Qu)\|_{L_2([0,1])}\) and \(\|D^r u\|_{L_2([0,1])} = (b-a)^{r-1}\|D^rf_n^\alpha\|_{L_2([a,b])}\) gives \(\|D^k(f_n^\alpha - v^*)\|_{L_2([a,b])} \leq C \gamma^{r-k} \|D^rf_n^\alpha\|_{L_2([a,b])}\). Applying Theorem~\Ref{theorem:representing}\ref{theorem:representing:apriori} brings us to \(\|D^k(f_n^\alpha - v^*)\|_{L_2([a,b])} \leq C \alpha^{-1}\gamma^{r-k}\). Squaring both sides and summing the bounds for \(k = 0\) and \(k = m\), and then noticing that for small enough \(\gamma\) we have \(\gamma^{2r} \leq C \gamma^{2(r-m)}\) gives the result.

  For item~\ref{item:itemii}, we begin with a bound for \(\|D^kv^*\|_{L_\infty([a,b])}\) and then we will apply Corollary~\ref{cor:extend}. The triangle inequality gives \(\|D^kQu\|_{L_\infty([0,1])} \leq \|D^ku\|_{L_\infty([0,1])} + \|D^k(u - Qu)\|_{L_\infty([0,1])}\). Applying to the second term the same two theorems of \cite{schumaker_spline_2007} as in the previous paragraph, but with \(p = q = \infty\), we have
  \begin{equation}
    \|D^k(u-Qu)\|_{L_\infty([0,1])} \leq C \Delta^{\sigma - k}\omega_{r-\sigma}(D^\sigma u;\Delta)_{L_\infty([0,1])} \leq C \Delta^{r - k}\|D^r u\|_{L_\infty([0,1])}.
  \end{equation}
  Now, using \(\|D^kQu\|_{L_\infty([0,1])} = (b-a)^k\|D^kv^*\|_{L_\infty([a,b])}\) and \(\|D^ku\|_{L_\infty([0,1])} = (b-a)^k \|D^kf_n^\alpha\|_{L_\infty([a,b])}\) and combining the two upper bounds, we have
  \begin{equation}
    \|D^kv^*\|_{L_\infty([a,b])} \leq \|D^k f_n^\alpha \|_{L_\infty([a,b])} + C\gamma^{(r-k)} \|D^rf_n^\alpha\|_{L_\infty([a,b])}.
  \end{equation}
  Now, for \(\gamma^* \leq M_1\),
  \begin{equation}
    \begin{aligned}
      \|v^*\|_{L_2(\mathbb{R}\setminus[a,b])}^2 + \|D^mv^*\|_{L_2(\mathbb{R}\setminus[a,b])}^2 &\leq C_1 \sum_{k=0}^{r-1} (\gamma^*)^{2k+1} [D^k v^*(a)]^2 + C_2 \sum_{k=0}^{r-1} (\gamma^*)^{2k+1} [D^k v_0^*(b)]^2\\
      &\leq C_1 \sum_{k=0}^{r-1} (\gamma^*)^{2k+1} \|D^kv^*\|_{L_\infty([a,b])}^2 + C_2 \sum_{k=0}^{r-1} (\gamma^*)^{2k+1} \|D^kv^*\|_{L_\infty([a,b])}^2\\
      &\leq C_3 \gamma^* \sum_{k=0}^{r-1}\|D^kv^*\|_{L_\infty([a,b])}^2,
    \end{aligned}
  \end{equation}
  where the first inequality is straight from Corollary~\ref{cor:extend}, the second comes from upper-bounding the values at \(a\) and \(b\) by the suprema over the entire interval, and the third follows from \(\gamma^* \leq M\), with \(C_3\) subsuming \(C_1\), \(C_2\), and the \(M_1^k\). Now we use the upper bound for \(\|D^kv^*\|_{L_\infty([a,b])}\) just derived to find, for \(\gamma < M_2\),
  \begin{equation}
    \begin{aligned}
      &\|v^*\|_{L_2(\mathbb{R}\setminus[a,b])}^2 + \|D^mv^*\|_{L_2(\mathbb{R}\setminus[a,b])}^2\\
      &\qquad\leq C_3 \gamma^* \sum_{k=0}^{r-1}\left (\|D^k f_n^\alpha \|_{L_\infty([a,b])} + C_4\gamma^{(r-k)} \|D^rf_n^\alpha\|_{L_\infty([a,b])} \right )^2\\
      &\qquad\leq C_7 \gamma^* \left (C_5\alpha^{-1} + C_6\gamma \alpha^{-1} \right )^2\\
      &\qquad\leq C_8 \frac{\gamma^*}{\alpha^2} + C_9 \frac{\gamma^*\gamma}{\alpha^2}.
    \end{aligned}
  \end{equation}

  For Item~\ref{item:itemiii}, assume \(a < 0 < b\) (which must eventually be true).
  \begin{equation}
    \begin{aligned}
      \int_{|b|}^\infty |D^kf_n^\alpha(x)|^2\,dx &\leq \frac{C}{\alpha} \int_{|b|}^\infty |D^kf_n^\alpha(x)|\,dx\\
      &= \frac{C}{\alpha} \int_{|b|}^\infty \left | \frac{1}{2\pi} \int_{-\infty}^\infty e^{i\omega x}(it)^k\tilde \varphi_\alpha(\omega)\tilde h_n(\omega)\,d\omega\right |\,dx\\
      &\leq \frac{C}{\alpha^2} \int_{|b|}^\infty \left | \frac{1}{2\pi} \int_{-\infty}^\infty e^{i\omega x}\tilde h_n(\omega)\,d\omega\right |\,dx\\
      &= \frac{C}{\alpha^2} \int_{|b|}^\infty h_n(x)\,dx\\
      &\leq \frac{C}{|b|\alpha^2} \int_{-\infty}^\infty |x| h_n(x)\,dx,
    \end{aligned}
  \end{equation}
  where the first and second inequalities are by Theorem~\ref{theorem:representing}\ref{theorem:representing:apriorisup}, and the third is by Markov's inequality. Now,
  \begin{equation}
    \begin{aligned}
      \|f_n^\alpha\|_{L_2(\mathbb{R}\setminus[a,b])}^2 + \|D^mf_n^\alpha\|_{L_2(\mathbb{R}\setminus[a,b])}^2 &= \int_b^\infty |f_n^\alpha(x)|^2\,dx + \int_b^\infty |D^mf_n^\alpha(x)|^2\,dx \\
      &\qquad + \int_{-a}^\infty |f_n^\alpha(-x)|^2\,dx + \int_{-a}^\infty |D^mf_n^\alpha(-x)|^2\,dx\\
      &\leq  \frac{4C}{\alpha^2 (|a|\wedge|b|)} \int_{-\infty}^\infty |x| h_n(x)\,dx.
    \end{aligned}
  \end{equation}

  Finally, combining Items~\ref{bounds:itemi}-~\ref{item:itemiii} with (\ref{eqn:ritzbound}) and taking expectations, we have
  \begin{equation}
    \mathbb{E}\|s_n^\alpha - f_n^\alpha\|^2 \leq C_1 \alpha^{-4}\gamma^{2(r-m)} + C_2 \alpha^{-4}\gamma^*(1+\gamma) + C_3 \alpha^{-4}(|a|\wedge|b|)^{-1} \mu_{\hat Y},
  \end{equation}
  Thus, if \(\gamma,\gamma^*,a\), and \(b\) are chosen as in the hypothesis of the Theorem, we have \(\mathbb{E}\|s_n^\alpha - f_n^\alpha\|^2 = o(r_n)\). Then the result follows from
  \begin{equation}
    \mathbb{E}\|s_n^\alpha - f\|^2 \leq 2\mathbb{E}\|f_n^\alpha - f\|^2 + 2\mathbb{E}\|s_n^\alpha - f_n^\alpha\|^2 \leq O(r_n) + o(r_n)
  \end{equation}
\end{defproof}

\pagebreak

\bibliography{ref.bib}
\makeatletter\@input{msaux.tex}\makeatother